\let\originalleft\left
\let\originalright\right
\renewcommand{\left}{\mathopen{}\mathclose\bgroup\originalleft}
\renewcommand{\right}{\aftergroup\egroup\originalright}
\newcommand{\comD}[1]{\iftoggle{draftmode}{{\textcolor{red}{\textbf{\textit{#1}}}}}{\ignorespaces}}		
\newcommand{\comT}[1]{\iftoggle{draftmode}{{\textcolor{ForestGreen}{\textbf{\textit{#1}}}}}{\ignorespaces}}	
\newcommand{\R}{\mathbb{R}}							
\newcommand{\BA}{\operatorname{\mathbf{BA}}}		
\newcommand{\Z}{{\mathbb{Z}}}						
\newcommand{\Q}{{\mathbb{Q}}}						
\newcommand{\GL}{\operatorname{GL}}					
\newcommand{\SL}{\operatorname{SL}}					
\newcommand{\M}{{\mathcal{M}}}						
\newcommand{\MK}{{\M_K}}							
\newcommand{\N}{{\mathbb{N}}}						
\newcommand{\BO}{{\mathcal{O}}}						
\newcommand{\CO}{{\mathbb{C}}}						
\newcommand{\Hc}{{\mathcal{H}}}						%
\newcommand{\Lc}{{\mathcal{L}}}						%
\newcommand{\abwin}{$(\alpha, \beta)$-winning}		
\newcommand{\awin}{$\alpha$-winning}				
\newcommand{\pbar}{\overline{p}}
\newcommand{\qbar}{\overline{q}}
\newcommand{\iv}[2][]{\ifstrempty{#2}{\iota_{#1}}{\iota_{#1} \left( {#2} \right)}}	
\newcommand{\vect}[1]{\vec{\mathbf{#1}}}									
\newcommand{\vvect}[2]{\begin{matrix} {#1} \\ {#2} \end{matrix}}	
\newcommand{\pvect}[2]{\begin{pmatrix} {#1} \\ {#2} \end{pmatrix}}	
\newcommand{\abs}[1]{\left| {#1} \right|}							
\newcommand{\norm}[1]{\left\| {#1} \right\|}						
\newcommand{\prt}[1]{\left( {#1} \right)}							
\newcommand{\cbrk}[1]{\left\{ {#1} \right\}}						
\newcommand{\height}[1]{\operatorname{H}\left( {#1} \right)}		
\newcommand{\hplane}[2]{\mathcal{L} \prt{ {#1}, {#2} }}				
\newcommand{\dist}[1]{\ifstrempty{#1}{\operatorname{dist}}{{\operatorname{dist}} \left( {#1} \right)}}	
\numberwithin{equation}{section}
\newtheorem{Thm}{Theorem}[section]
\newtheorem{Prop}[Thm]{Proposition}
\newtheorem{Cor}[Thm]{Corollary}
\newtheorem{Lem}[Thm]{Lemma}
\theoremstyle{definition}
\newtheorem{Rem}[Thm]{Remark}
\newtheorem{Eg}[Thm]{Example}
\newcommand{\eq}[2]{\ifstrempty{#1}{\begin{equation*} {#2} \end{equation*}}{\iftoggle{draftmode}{ {\tt ~[#1]~} }{\ignorespaces} \begin{equation} \label{eq:#1} {#2} \end{equation}}}
\newcommand{\eqr}[1]{\eqref{eq:#1}}
\newcommand{\lab}[1]{\label{#1} \iftoggle{draftmode}{{\tt [#1]}}{\ignorespaces}}
\newcommand {\new}[1]   {\iftoggle{draftmode}{{\leavevmode\color{blue}{#1}}}{{#1}}}
\newcommand{\x}{{\mathbf{x}}}	
\newcommand{\y}{{\mathbf{y}}}
\newcommand{\z}{{\mathbf{z}}}
\newcommand{\da}{Diophantine approximation}
\newcommand{\hd}{Hausdorff dimension}
\newcommand{\B}{{\mathbf{B}}}
\newcommand{\supp}{\operatorname{supp}}		
\begin{document}

\title[Badly approximable $S$-numbers and absolute Schmidt games]{Badly approximable $S$-numbers \\
and 
absolute Schmidt games}

\author{Dmitry Kleinbock}
\address{Brandeis University, Waltham MA
02454-9110 {\tt kleinboc@brandeis.edu}}

\author{Tue Ly}
\address{Brandeis University, Waltham MA
02454-9110 {\tt lntue@brandeis.edu}}

\date{August 2014}

\thanks{The first-named author was supported by NSF grant DMS-1101320. The authors' stay at MSRI was supported in part by NSF grant no. 0932078 000.}

\begin{abstract} Let $K$ be a number field, let $S$ be  
the set of all normalized, non-conjugate Archimedean valuations of $K$, and let $K_{S} = \prod_{v \in S} K_v$ be the Minkowski space associated with $K$. 
	We strengthen 
	 recent results of~\cite{EsdahlKristensen10} and \cite{EinsiedlerGhoshLytle13} 
	by showing that the set of 
	badly approximable elements of $K_S$ 	
	is $\Hc$-absolute winning for a certain family of subspaces of $K_{S}$. \end{abstract}

\maketitle

\tableofcontents

\section{Introduction}
\lab{Sec:Intro}
In the classical theory of Diophantine approximation, a number $x \in \R$ is called \emph{badly approximable} if there exists $c = c_x > 0$ such that for every $p \in \Z$, $q \in \N$:
\eq{BA in R}{\abs{x - \frac{p}{q}} > \frac{c}{q^2}, \text{ or equivalently, } \abs{qx - p} > \frac{c}{q}.}
The set of 
badly approximable numbers is small in that its Lebesgue measure is 0.  Nevertheless, this set is \emph{thick}, which means that its intersection with any open set in $\R$ has full Hausdorff dimension.  In fact, the intersection of any countable collection of translations of this set is still thick.  This remarkable result was first proved by W. Schmidt~\cite{Schmidt66} by showing that the set of badly approximable numbers is a winning set of an infinite topological game, which will be called \emph{Schmidt game}.  In Schmidt game, two players (Alice and Bob) alternatively choosing nested balls and Alice's goal is to steer the intersection point to the target set, which is called a \emph{winning set} of the game if Alice has a winning strategy (see~\S\ref{Subsec:Schmidt game}).  Inspired by Schmidt, C. McMullen~\cite{McMullen10} introduced a variation of Schmidt game, in which Alice can only delete neighborhoods of points, and proved that the set of badly approximable numbers is winning on this game.  McMullen's version of the game is called \emph{absolute game}, and its winning sets are called \emph{absolute winning}.  Absolute winning is a stronger property than winning and is preserved 
by quasi-symmetric maps (see~\S\ref{Subsec:H-absolute game}).

Our goal is to generalize the above results into the setting of algebraic number fields.  To be more precise, let $K$ be a number field of degree $d$ and 
let $S$ be 
the set of all normalized, non-conjugate Archimedean valuations of $K$.  
If $v$ is an element of $S$, the corresponding embedding of $K$ into its completion $K_v$ is denoted by $\iv[v]{}$.
We will replace $\R$ by the \emph{Minkowski space} associated with $K$, defined by:
\eq{Kinf}{K_{S} := \prod_{v \in S} K_v \cong K \otimes_\Q \R \cong \R^d;}
elements $\x = (x_v)_{v \in S} \in K_{S}$ will be referred to as \emph{$S$-numbers}.  The diagonal embedding of $K$ into $K_S$ is denoted by $\iv[S]{}$:
\eq{}{\iv[S]{} : K \to K_S, r \mapsto \prt{\iv[v]{r}}_{v \in S}.}

We also equip
$K_S$ with the $K_S$-valued inner product:
\eq{}{ \x \cdot \y = \prt{x_v y_v}_{v \in S} }
 and the sup norm
\eq{}{ \norm{\x} = \max_{v \in S} \abs{x_v} }
for every $\x = \prt{x_v}_{v \in S}$ and $\y = \prt{y_v}_{v \in S}$ in $K_S$.

We let $\BO$ be the ring of integers of $K$; note that
$\iv[S]{\BO}$ is a lattice in $K_S$.

Following~\cite{EinsiedlerGhoshLytle13}, we 
will say that $\x \in K_{S}$ is \emph{badly approximable} if there exists $c > 0$ 
such that for every $p \in \BO$, $q \in \BO \smallsetminus \cbrk{0}$:
\eq{BA in K}{\norm{\iv[S]{q}\cdot \x +\iv[S]{p}} = \max_{v \in S} \abs{\iv[v]{q} x_v + \iv[v]{p}} > c \prt{\max_{v \in S} \abs{\iv[v]{q}}}^{-1} = c \|\iv[S]{q}\|^{-1}.}
or equivalently,
\eq{BA in K-inf}{\inf \cbrk{ \norm{\iv[S]{q}} \cdot \norm{\iv[S]{q} \cdot \x + \iv[S]{p}} : p, q \in \BO, (p, q) \neq 0 } > 0.}

Let us denote the set of badly approximable 
${S}$-numbers by $\BA_{K}$.  In particular, when $K = \Q$,~\eqr{BA in K} is the same as \eqr{BA in R}, and we come back to the classical case.  Measure-wise, $\BA_K$ is small, that is, it can be shown to have Lebesgue measure zero,  see~\cite{EinsiedlerGhoshLytle13} or Remark \ref{erg} below.  
Explicit examples of badly approximable ${S}$-numbers for arbitrary number field $K$ were constructed by Burger~\cite{Burger92}, and when $K$ is real quadratic or totally complex quartic, Hattori~\cite{Hattori07} showed that $\BA_K$ is uncountable.  Note that those authors used different definitions of badly approximable numbers, but their examples in fact belong to $\BA_K$.  The first result regarding the winning property of $\BA_K$ beyond the classical case was proved by Esdahl-Schou and Kristensen in~\cite{EsdahlKristensen10} as follows:

\begin{Thm}[{\cite[Lemma 4]{EsdahlKristensen10}}] \lab{Thm:ESK}
	Let $K$ be an imaginary quadratic number field with class number 1, i.e., $K = \Q(\sqrt{-D})$, where $D \in \cbrk{1, 2, 3, 7, 11, 19, 43, 67, 163}$, and $Y \subseteq \CO \cong K_S$ be a compact set supporting an Ahlfors regular measure $\mu$.	Then $\BA_K \cap~Y$ is winning for the Schmidt game playing on $Y$.
\end{Thm}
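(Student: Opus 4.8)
The plan is to adapt Schmidt's original winning argument \cite{Schmidt66} to the complex place and to the fractal support $Y$. Since $K$ is imaginary quadratic, $S$ consists of a single complex valuation, so $K_S \cong \CO \cong \R^2$, the ring of integers $\BO$ sits in $\CO$ as a lattice, and $x \in \BA_K$ precisely when $\inf \cbrk{\abs{q}\,\abs{qx + p} : p, q \in \BO,\ q \neq 0} > 0$. I would fix a small $\alpha > 0$ (calibrated below), allow Bob an arbitrary ratio $\beta$, and build a strategy for Alice that forces the nested intersection point $x_\infty$ into $\BA_K \cap Y$; the bad-approximability constant $c$ of $x_\infty$ will be chosen small in terms of $\alpha$ and $\beta$.

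First I would record the arithmetic separation of the points to be avoided. Because $K$ has class number $1$, $\BO$ is a PID, so each approximant $-p/q$ may be taken in lowest terms, and since $\abs{z}^2 = N(z) \in \Z_{\geq 1}$ for every nonzero $z \in \BO$, distinct approximants satisfy $\abs{p_1/q_1 - p_2/q_2} = \abs{p_1 q_2 - p_2 q_1}/(\abs{q_1}\abs{q_2}) \geq 1/(\abs{q_1}\abs{q_2})$. The set $x_\infty$ must avoid is the union of the bad disks $\Delta(p, q) = \cbrk{x : \abs{x + p/q} \leq c/\abs{q}^2}$. Writing $R_i = (\alpha\beta)^i R_0$ for the radius of Bob's $i$-th ball (Bob moves first, so Alice may fix her bands after seeing $R_0$), I would split denominators into consecutive geometric bands $Q_{i-1} \leq \abs{q} < Q_i$ with $Q_i = (\alpha\beta)^{-1/2} Q_{i-1}$, calibrated so that every band-$i$ disk has radius $c/\abs{q}^2 \leq \lambda R_{i-1}$ (with $\lambda$ the threshold from the dodging lemma below), while distinct band-$i$ approximants are separated by at least $1/Q_i^2$, which exceeds the diameter $2R_{i-1}$ of $B_{i-1}$ once $c$ is small in terms of $\alpha$ and $\beta$. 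The separation bound forces at most one band-$i$ disk to meet $B_{i-1}$ --- the dangerous approximant of round $i$ --- while band-$i$ disks disjoint from $B_{i-1}$ are missed automatically by any $x_\infty \in B_{i-1}$.

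The geometric heart, and the step I expect to be the main obstacle, is a dodging lemma driven by Ahlfors regularity: if $\mu$ is Ahlfors $\delta$-regular on $Y$ with constant $C$, there exist $\alpha_0, \lambda > 0$ such that for every $\alpha \leq \alpha_0$, every ball $B(z, R)$ centered on $Y$, and every disk $\Delta$ of radius at most $\lambda R$, one can find $z' \in Y \cap B(z, (1 - \alpha)R)$ with $B(z', \alpha R) \subseteq B(z, R) \setminus \Delta$. To prove it I would compare masses: the $(\alpha + \lambda)R$-neighborhood of the center of $\Delta$ carries $\mu$-mass at most $C\,\bigl((\alpha + \lambda)R\bigr)^\delta$, whereas $Y \cap B(z, (1 - \alpha)R)$ carries mass at least $C^{-1}\,\bigl((1 - \alpha)R\bigr)^\delta$; for $\alpha$ and $\lambda$ small the latter dominates, so a legal center $z'$ lies outside the enlarged obstacle, and then $B(z', \alpha R)$ is disjoint from $\Delta$ while contained in $B(z, R)$. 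The delicate point is to calibrate $\alpha$, $\lambda$, and $c$ simultaneously so that each dangerous disk indeed has radius at most $\lambda R_{i-1}$.

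Finally I would assemble the strategy. At round $i$, if a dangerous approximant exists Alice applies the dodging lemma to its disk to pick $A_i = B(z', \alpha R_{i-1}) \subseteq B_{i-1}$ disjoint from it, and otherwise she plays arbitrarily. Since $Y$ is compact and the radii shrink to $0$, the intersection point $x_\infty \in \bigcap_i A_i$ lies in $Y$; and by the band matching it avoids every bad disk $\Delta(p, q)$, so $\abs{qx_\infty + p} > c/\abs{q}$ for all $p, q \in \BO$ with $q \neq 0$, i.e. $x_\infty \in \BA_K$. As Bob's opening ball was arbitrary and $\alpha$ fixed, this produces Alice's winning strategy and shows $\BA_K \cap Y$ is $\alpha$-winning, hence winning.
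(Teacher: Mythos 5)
Your proposal is correct in outline, but it takes a genuinely different route from the paper. The paper does not reprove Schmidt's construction at all: it derives this theorem as a corollary of Theorem~\ref{Thm:Main}. Since $K$ is imaginary quadratic, $d = 2$ and the collection $\Hc_K$ of \eqr{Hc} consists of single points, so $\BA_K$ is absolute winning in McMullen's sense (Corollary~\ref{Cor:AW}); the support of an Ahlfors regular measure is $\Hc$-diffuse for $\Hc$ the singletons (Example~\ref{Eg:Ahlfors}, which is exactly your mass-comparison dodging lemma in disguise); the inheritance property (Proposition~\ref{Prop:Inheritance}) then gives absolute winning on $Y$, and Proposition~\ref{Prop:winning} converts this to Schmidt winning on $Y$. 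You instead adapt Schmidt's original argument directly, in the spirit of the source \cite{EsdahlKristensen10}: separation of approximants from $\abs{N(z)} \in \Z_{\geq 1}$ on $\BO \smallsetminus \cbrk{0}$, denominator bands $Q_{i-1} \leq \abs{q} < Q_i$ with ratio $(\alpha\beta)^{-1/2}$ synchronized to the radii $R_i = (\alpha\beta)^i R_0$ so that at most one band-$i$ disk threatens $B_{i-1}$, and the Ahlfors-regular dodging lemma to evade it while staying on $Y$. What each approach buys: the paper's route is shorter given the machinery and yields strictly more --- absolute winning on $Y$ (hence stability under countable intersections and quasi-symmetric maps) and, notably, no class number hypothesis, whereas your argument as written leans on $\BO$ being a PID to reduce fractions (though even that is avoidable: one can band each fraction by its minimal denominator modulus, and the separation bound never needed coprimality). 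Your route, in exchange, is self-contained, elementary, and makes the constants explicit. Three calibration points you should patch when writing it up, none fatal: (a) denominators with $\abs{q} < Q_0$ are not covered by your bands; since $\abs{q} \geq 1$ for nonzero $q \in \BO$, choose $c \leq \lambda R_0$ so that $Q_0 \leq 1$, which makes $c$ depend on $R_0$ as well as on $\alpha, \beta$ --- legitimate, since the bad-approximability constant of $x_\infty$ may depend on the play; (b) the regularity bounds \eqr{Ahlfors} hold only for radii $\leq r_0$, so Alice should play arbitrarily until $R_{i} \leq r_0$; (c) ``at most one dangerous disk'' needs the separation $1/Q_i^2$ to exceed $2(1 + \lambda)R_{i-1}$, not just the diameter $2R_{i-1}$, since the disks themselves have radius up to $\lambda R_{i-1}$ --- absorbed by shrinking $c$ further.
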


We recall that $\mu$ is said to be  \emph{Ahlfors regular} if  there exist constants $a, b, \delta, r_0 > 0$ such that for any $z \in Y$ and any $0 < r \leq r_0$,
	\eq{Ahlfors}{ar^{\delta} \leq \mu \big(\B(z, r)\big) \leq br^{\delta},}
	where $\B(z, r)$ is the closed ball centered at $z$ of radius $r$.  It follows that $Y = \supp\,\mu$ has \hd\ $\delta$, and one knows, see e.g.\ \cite[Lemma 3]{EsdahlKristensen10}, that any subset of $Y$ which  winning on  $Y$ is thick.

A few years later, Einsiedler et al.~\cite{EinsiedlerGhoshLytle13} considered  an arbitrary number field $K$ and showed that the intersections of $\BA_K$ with certain smooth curves in $K_{S}$ are 
winning. The following notation is needed to state their result: for a   subset $T \subseteq S$,  
denote by $T_\R$ and $T_\CO$ 
the sets of 
valuations 
in $T$ whose corresponding $K$-completions are isomorphic to $\R$ 
or $\CO$ respectively:
\eq{}{T_\R := \cbrk{v \in T: K_v \cong \R} \text{~\quad and \quad} T_\CO := \cbrk{v \in T: K_v \cong \CO}.}

\begin{Thm}[{\cite[Theorem 1.1]{EinsiedlerGhoshLytle13}}] \lab{Thm:EGL}
	Let $\phi : [0, 1] \to K_{S}$ be a continuously differentiable map.  For any $x\in[0, 1]$, define
	$$
	T(x) := \{v\in S : \phi'_v(x) \neq 0\}.
	$$ 
	Assume that for all but finitely many $x \in [0, 1]$ 
	we have
	\eq{}{\#T(x)_\R + 2 \prt{\# T(x)_\CO} > \frac{d}{2}.} 
	Then $\phi^{-1} \prt{\BA_{K}} $ is winning, 
	and hence $\BA_{K}\cap\, \phi([0,1])$ is thick in $ \phi([0,1])$.
\end{Thm}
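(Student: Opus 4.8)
The plan is to exhibit $\phi^{-1}(\BA_K)$ as the winning set of an ordinary Schmidt game played on $[0,1]$, following Schmidt's classical strategy: at the $n$-th round Alice deletes a short subinterval carrying exactly the approximation data that first becomes visible at the scale of Bob's current ball. Before playing I would reduce to uniform control. Write $k(x):=\#T(x)_\R+2\#T(x)_\CO$; by hypothesis $k(x)>d/2$ off a finite set $F\subseteq[0,1]$. Since each $\cbrk{x:\phi'_v(x)\neq0}$ is open, $x\mapsto k(x)$ is lower semicontinuous and integer valued, so every $x_0\notin F$ has a closed neighborhood on which $k\ge k(x_0)>d/2$ and on which $\abs{\phi'_v}\ge\kappa>0$ for all $v$ in a fixed set $T_0$ with $\#(T_0)_\R+2\#(T_0)_\CO>d/2$. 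Because \awin\ is preserved under restriction to subintervals and Alice may spend her first moves steering the play off any prescribed finite set, it suffices to prove the winning property on one such subinterval $J$, on which $\kappa$ and $T_0$ are now uniform.

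Fix $\alpha\in(0,1/2)$ small and let Bob play an arbitrary $\beta$; his $n$-th ball $B_n$ has radius $\rho_n\to0$ geometrically. Choose a winning constant $c>0$ (small, depending on $\alpha,\kappa,J$ and the covolume and shortest vector of $\iv[S]{\BO}$) and attach to round $n$ the dyadic block of pairs $(p,q)\in\BO\times\prt{\BO\smallsetminus\cbrk{0}}$ whose denominator satisfies $\norm{\iv[S]{q}}^2\rho_n\asymp c$; this is the scale at which the bad set
\[
E(p,q)=\cbrk{x:\ \norm{\iv[S]{q}}\cdot\norm{\iv[S]{q}\cdot\phi(x)+\iv[S]{p}}\le c}
\]
first shrinks to the size of $B_n$. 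Every pair is attached to exactly one round, so if Alice keeps $A_n$, and hence all later balls, disjoint from the bad sets attached to round $n$, then the limit point $x_\infty=\bigcap_n B_n$ avoids every $E(p,q)$, i.e.\ $\phi(x_\infty)\in\BA_K$.

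Each single deletion rests on two estimates inside $B_n$. For the \emph{size} of one bad set: on any moving place $v\in T_0$ the scalar map $x\mapsto\iv[v]{q}\phi_v(x)+\iv[v]{p}$ has derivative of modulus $\ge\kappa\abs{\iv[v]{q}}$, so $E(p,q)\cap B_n$ lies in a subinterval of length $\lesssim\prt{c/\norm{\iv[S]{q}}}\big/\prt{\kappa\max_{v\in T_0}\abs{\iv[v]{q}}}$, a controlled fraction of $\rho_n$ at the attached scale \emph{provided} the denominator's modulus is comparable to $\norm{\iv[S]{q}}$ on some moving place. For the \emph{number} of relevant pairs: if $(p,q)$ and $(p',q')$ are both dangerous on $B_n$, say at witnesses $x,x'\in B_n$, then the identity
\[
\iv[v]{q'p-qp'}=\iv[v]{q'}\prt{\iv[v]{q}\phi_v(x)+\iv[v]{p}}-\iv[v]{q}\prt{\iv[v]{q'}\phi_v(x')+\iv[v]{p'}}+\iv[v]{q}\iv[v]{q'}\prt{\phi_v(x')-\phi_v(x)}
\]
gives $\norm{\iv[S]{q'p-qp'}}\lesssim c+\norm{\iv[S]{q}}\norm{\iv[S]{q'}}\rho_n\asymp c$ at every place; for $c$ below the shortest vector of $\iv[S]{\BO}$ this forces $q'p=qp'$, so the two pairs represent the same element of $K$ and define the same bad set. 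Thus, in the benign regime, at each round Alice faces essentially one short interval to delete and simply picks $A_n\subseteq B_n$ of radius $\alpha\rho_n$ disjoint from it. Iterating yields $x_\infty\in\phi^{-1}(\BA_K)$, so the set is \awin, hence winning; thickness of $\BA_K\cap\phi([0,1])$ then follows from the implication ``winning $\Rightarrow$ thick'' quoted in the excerpt together with the bi-Lipschitz behaviour of $\phi$ on $J$.

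The genuinely delicate step—and the one that consumes the hypothesis $\#T(x)_\R+2\#T(x)_\CO>d/2$—is making the two estimates honest when the maximal modulus $\norm{\iv[S]{q}}$ is attained at a \emph{non-moving} place. For such denominators the expansion rate on the moving places drops, $E(p,q)\cap B_n$ grows correspondingly, and the clean ``at most one pair per round'' conclusion can fail. One must instead bound, uniformly over Bob's choice of $B_n$ and summably over scales, the number of $\BO$-denominators whose archimedean modulus concentrates on the non-moving places while still meeting the rigid lattice-proximity constraints imposed there. This is a geometry-of-numbers count in $\iv[S]{\BO}\subseteq K_S\cong\R^d$, weighing the $k$-dimensional ``moving'' volume against the $(d-k)$-dimensional ``rigid'' constraints, and it is precisely this balance that forces the threshold $k>d/2$: beyond it the dangerous denominators are too sparse to cluster inside any single $B_n$, so the total deleted length stays a definite fraction below $\rho_n$ and the strategy closes. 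I expect establishing this uniform, scale-summable count—with constants independent of Bob's play—to be the technical heart of the proof.
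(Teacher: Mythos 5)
Your plan collapses exactly at the point you flag yourself, and the difficulty there is worse than your closing paragraph suggests. The denominators whose modulus concentrates on non-moving places are not a sparse exceptional family amenable to a geometry-of-numbers count: the pairs Alice must defeat include all unit multiples $(\xi p,\xi q)$, $\xi\in\BO^\times$, of every single fraction, and since the unit group has rank $\#S-1$ it distributes the moduli $\abs{\iv[v]{\xi q}}$ essentially arbitrarily among the places. Thus each fraction spawns a whole unit-orbit of pairs, most of whose sup norms are dominated by non-moving places; your round-attachment $\norm{\iv[S]{q}}^2\rho_n\asymp c$ scatters this orbit over infinitely many rounds, and at each such round both your length estimate (which needs a moving place $v\in T_0$ with $\abs{\iv[v]{q}}\gtrsim\norm{\iv[S]{q}}$) and your one-fraction-per-round determinant argument (which needs $\norm{\iv[S]{q}}\asymp\norm{\iv[S]{q'}}$ \emph{and} control of the cross terms at every place) break down. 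Moreover, even if your conjectured ``uniform, scale-summable count'' were true, it would not close a Schmidt-game argument: winning demands that at \emph{every} round, uniformly over Bob's play, the dangerous set be avoidable inside the current ball; summability over scales provides no such per-round guarantee.

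The paper does not prove the theorem directly but derives it from Theorem~\ref{Thm:Main} ($\BA_K$ is $\Hc_K$-absolute winning), and its mechanism for the hard case is structural rather than counting-based: the sup norm is replaced by the unit-invariant height $\height{\cdot}$ together with the Dani correspondence (Proposition~\ref{Prop:Dani Corr}); Lemma~\ref{Lem:Height properties} collapses each unit orbit to a representative balanced across all places; the simplex-lemma uniqueness is Lemma~\ref{Lem:Unique2}, proved with heights and hence valid no matter where the moduli concentrate; and the computation~\eqr{H Der} with Remark~\ref{Rem:HDer nonneg} shows that $\height{g_t u_\x \iv[S]{\vvect{p}{q}}}$ can decrease only when the weighted set $T_{\x,t}$ of places where $\x$ is $e^{-2t}$-close to $\iv[S]{\frac{p}{q}}$ satisfies $\#\prt{T_{\x,t}}_\R+2\,\#\prt{T_{\x,t}}_\CO>\frac{d}{2}$, i.e.\ only on a neighborhood of a subspace $\hplane{\iv[S]{\frac{p}{q}}}{T}\in\Hc_K$. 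So the threshold $d/2$ arises from the sign of the expansion/contraction balance along the flow, not from weighing a $k$-dimensional moving volume against $(d-k)$-dimensional rigid constraints as you speculate. The curve hypothesis then enters by pigeonhole: since $T(x)$ and $T$ each carry weight exceeding $d/2$, they share a place, so the curve is nowhere tangent to any member of $\Hc_K$, is $\Hc_K$-diffuse (Proposition~\ref{Prop:H-diffuse curve}), and inherits the winning property (Proposition~\ref{Prop:Inheritance}); the remaining work transfers the game to $[0,1]$ and disposes of the degenerate set $D$ via Propositions~\ref{Prop:Countable Intersection} and~\ref{Prop:Remove Countable}. To repair your sketch you would in effect have to rebuild these ingredients — restate the bad sets through the height (equivalently, work modulo units), after which your ``benign regime'' analysis is no longer a special case but the whole proof, with the danger zones identified as neighborhoods of the subspaces in $\Hc_K$.
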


In particular it follows that the set $\BA_{K}$ is itself thick. 
The goal of this  paper is to prove a stronger property of the set $\BA_{K}$, from which the conclusions of Theorem \ref{Thm:ESK} and Theorem \ref{Thm:EGL} follow. Namely, following \cite{McMullen10, BroderickFishmanKleinbockReichWeiss12,  FishmanSimmonsUrbanski13}  in \S\ref{Sec:Schmidt game}  we describe so-called \emph{$\Hc$-absolute game} on a subset of a complete metric space, where $\Hc$ is a collection of subsets of the space. In this game, compared with McMullen's version, Alice deletes neighborhoods of sets from $\Hc$ instead of points.  Winning sets of this game are also winning for the regular Schmidt's game, and, moreover, the winning properties are inherited by certain nice subsets
of the ambient space. 

To state our main result we need to introduce some notation. If $T$ is a subset of $S$ and $\x\in K_{S}$, we let 
\eq{hp}{\hplane{\x}{T} = \cbrk{ \y \in K_{S}: y_v = x_v\   \forall\, v \in T }}
be the affine subspace of $K_{S}$ passing through $\x$ and orthogonal to coordinate directions corresponding to $v \in T$. Then let $\Hc_{K}$ be the following family of affine subspaces:
\eq{Hc}{\Hc_{K} = \cbrk{\hplane{\x}{T} : \x \in \iv[S]{K}, T \subseteq S \text{ with } \#T_\R + 2 \prt{\#T_\CO} > \frac{d}{2}}.}
Our choice of $\Hc_{K}$ will be explained in~\S\ref{Sec:Proof}.  
In particular, we will show in Proposition~\ref{Prop:Optimal} that for any $\mathcal{L} \in \Hc_K$,
\eq{}{\mathcal{L} \cap \BA_K =\varnothing,}
so the collection $\Hc_K$ is, in some sense, optimal.

Here is our main theorem:

\begin{Thm}
	\lab{Thm:Main} $\BA_{K}$ is $\Hc_{K} $-absolute winning.
\end{Thm}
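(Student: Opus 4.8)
The plan is to fix a small parameter $\beta$ and produce a strategy for Alice in the $\beta$-$\Hc_K$-absolute game of \S\ref{Sec:Schmidt game} that forces the outcome of the play to lie in $\BA_K$; since $\beta$-$\Hc_K$-absolute winning for all sufficiently small $\beta$ yields the full conclusion of Theorem~\ref{Thm:Main} together with the corollaries recovering Theorems~\ref{Thm:ESK} and~\ref{Thm:EGL}, this is enough. The starting point is a geometric reading of \eqr{BA in K}: once $c > 0$ is fixed, a point $\y \in K_S$ fails to be badly approximable with constant $c$ exactly when it lies in one of the boxes
\eq{}{\Delta(p,q) = \cbrk{\z \in K_S : \abs{\iv[v]{q}\, z_v + \iv[v]{p}} \le c\,\norm{\iv[S]{q}}^{-1} \text{ for all } v \in S},}
as $(p,q)$ ranges over $\BO \times \prt{\BO \smallsetminus \cbrk{0}}$. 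Writing $H = \norm{\iv[S]{q}}$, the box $\Delta(p,q)$ is centered at $\iv[S]{-p/q} \in \iv[S]{K}$ and has side comparable to $c\,H^{-1}\abs{\iv[v]{q}}^{-1}$ in the $v$-direction; it is therefore thin in exactly the directions where $\abs{\iv[v]{q}}$ is large, and Alice must steer Bob off the union of these boxes using only deletions of members of $\Hc_K$.

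The geometric heart of the strategy is to realize each box as a thin neighborhood of an admissible subspace. Given $(p,q)$, order the valuations so that $\abs{\iv[v]{q}}$ is decreasing, and let $T(q) \subseteq S$ be the shortest initial segment for which $\#T(q)_\R + 2\#T(q)_\CO > d/2$; such a segment exists because the total weight is $d$, and by construction $\hplane{\iv[S]{-p/q}}{T(q)} \in \Hc_K$. The directions in $T(q)$ are precisely the ones in which $\Delta(p,q)$ is thinnest, so at the scale $r = r(p,q)$ determined by $\beta\, r = \max_{v \in T(q)} c\,H^{-1}\abs{\iv[v]{q}}^{-1}$ the box is contained in the $\beta r$-neighborhood of $\hplane{\iv[S]{-p/q}}{T(q)}$: in the $T(q)$-directions its sides are at most $\beta r$ by the choice of $r$, while in the remaining directions the subspace imposes no constraint. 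This is why $\Hc_K$ is forced to consist of subspaces of codimension exceeding $d/2$, matching the optimality recorded in Proposition~\ref{Prop:Optimal}; taking $T(q)$ of minimal admissible weight keeps the covering possible while leaving just enough room for the counting below.

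It remains to organize the deletions by scale and to check that at each scale Alice faces only boundedly many obstacles inside Bob's current ball. Write $d_v \in \cbrk{1,2}$ for the local degree of $v$, so that $\prod_{v} \abs{\iv[v]{q}}^{d_v} = \abs{N_{K/\Q}(q)} \ge 1$ whenever $q \ne 0$. Partitioning the pairs $(p,q)$ by the dyadic size of $r(p,q)$ and treating those with $r(p,q) \asymp r_i$ in round $i$, the point to establish is a counting lemma: among the admissible subspaces $\hplane{\iv[S]{-p/q}}{T}$ of scale $\asymp r_i$, only boundedly many meet the ball $B_i$. The mechanism is that if two such subspaces of the same type $T$ met $B_i$, their centers would agree to within $O(r_i)$ in every $T$-coordinate; expanding $\iv[v]{p/q} - \iv[v]{p'/q'} = \iv[v]{p q' - p' q}\big/\iv[v]{q q'}$ and feeding the nonzero integer $p q' - p' q \in \BO$ into the product formula controls how many such centers can lie in $B_i$, and the inequality $\#T_\R + 2\#T_\CO > d/2$---that the weight of $T$ exceeds that of its complement---is exactly the condition under which this count stays bounded. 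Since there are only finitely many types $T$, this yields boundedly many obstacles per scale, which Alice removes one at a time over a bounded block of consecutive rounds; as $r_i \to 0$ every scale, hence every box $\Delta(p,q)$, is eventually cleared, so the outcome of the game avoids all $\Delta(p,q)$ and lies in $\BA_K$.

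The main obstacle is the counting lemma of the previous paragraph: one must make the product-formula bound quantitative and uniform in the height $H = \norm{\iv[S]{q}}$ and across the finitely many types $T$, and control every implied constant so that a single $c = c(\beta,K) > 0$ works for all pairs $(p,q)$ simultaneously. Two features demand particular care. First, the complex valuations, where each side of $\Delta(p,q)$ is two-dimensional and enters every exponent with weight $d_v = 2$, so that $\#T_\R + 2\#T_\CO > d/2$---rather than a bare cardinality condition---is the correct closing inequality. Second, the interaction between the scale assignment $(p,q) \mapsto r(p,q)$ and Bob's freedom to contract radii only down to $\beta r_i$: this is what forces the bounded block of rounds, and it must be arranged so that each box is still covered at the slightly smaller scale at which Alice actually deletes it.
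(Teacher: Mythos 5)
Your architecture is sound in outline and is, at bottom, the paper's argument transposed from dynamical into direct Diophantine language: your product-formula bound on the nonzero integer $pq'-p'q$ is exactly the engine of the paper's Lemma~\ref{Lem:Unique1} (the number-field simplex lemma, where $2^d \height{\vect{z}}\height{\vect{z}'} < 1$ forces $\frac{p}{q} = \frac{p'}{q'}$), and your weight condition $\#T_\R + 2\#T_\CO > \frac{d}{2}$ enters at the same balance point where the paper's forward-derivative computation \eqr{H Der} changes sign. The real structural differences are two. First, the paper never covers each box once and for all at its own scale: it passes through the Dani correspondence (Proposition~\ref{Prop:Dani Corr}) and proves the criterion of Lemma~\ref{Lem:Bad} --- at checkpoint times $t_n = -\frac{1}{2}\log(\beta\rho_n)$ with bounded gaps, every vector either has height $\geq \varepsilon$ or nondecreasing log-height --- which, combined with the trivial derivative bound \eqr{H Der Bounded}, yields a uniform lower bound on $\delta_H(g_t \Lambda_{\x_\infty})$ for \emph{all} $t$. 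This sidesteps your most delicate scheduling problem (catching every box exactly when Bob's radius passes its scale), which you flag but do not resolve. Second, the multiplicity issue is settled once and for all by Proposition~\ref{Prop:NHWinning}: Alice deletes in a single move the union of all admissible subspaces through one rational point, rather than spreading deletions over a ``bounded block of rounds,'' where the interaction with rule~\ref{H2} (only $\rho_n \leq \beta r_n$ may be deleted, while $r_n$ may have shrunk by a factor $\beta$ per intervening round) would have to be checked by hand.

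The genuine gap is the counting lemma you yourself name as the main obstacle: it is asserted with a mechanism, not proved, and the proof is not routine. Meeting $B_i$ constrains the two centers only in the $T$-coordinates; in the complementary directions $\abs{\iv[v]{\frac{p}{q}} - \iv[v]{\frac{p'}{q'}}}$ is not controlled by $r_i$ at all (it can be of order $c\left(H\abs{\iv[v]{q}}\right)^{-1}$, since the boxes themselves are fat there), and closing the product-formula estimate uniformly over all configurations of $\left(\abs{\iv[v]{q}}\right)_{v \in S}$ --- in particular when $\min_{v\in T}\abs{\iv[v]{q}}$ and $\max_{v\in S}\abs{\iv[v]{q}}$ are wildly disparate, and at complex places entering with weight $d_v = 2$ --- is precisely the bookkeeping the paper packages into the time-$t$ height $\height{g_t u_\x \iv[S]{\vvect{p}{q}}} = \prod_{v\in S}\max\cbrk{e^{-t}\abs{\iv[v]{q}},\, e^t\abs{\iv[v]{q}x_v + \iv[v]{p}}}^{d_v}$ with $e^{-2t_n} = \beta\rho_n$. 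With that normalization one obtains (Lemma~\ref{Lem:Unique2}, via the perturbation estimate of Lemma~\ref{Lem:Bounding H}) the stronger and cleaner statement that at each scale there is at most \emph{one} dangerous fraction for the whole ball, not merely boundedly many per type; the ``finitely many obstacles per move'' are then just the at most $2^{\#S}$ subspaces of $\Hc_K$ through the single point $\iv[S]{\frac{p_n}{q_n}}$. Until you supply that uniform estimate (and the $c = c(\beta, K)$, chosen after Bob's first move, small enough that no box exceeds the initial ball), your argument is a completable outline rather than a proof; I would advise completing it by proving your counting lemma in the paper's height normalization, at which point it collapses into Lemmas~\ref{Lem:Unique1}--\ref{Lem:Unique2}.
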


As a consequence of the main theorem,  at the end of~\S\ref{Subsec:H-absolute game} we will prove the following corollary, which will imply Theorem~\ref{Thm:EGL}.

\begin{Cor} \lab{Cor:EGL}
	Let $\phi: [0, 1] \to K_S$ be a $C^1$ curve such that for all but countably many $x \in [0, 1]$, we have
	\eq{}{\#T(x)_\R + 2 \prt{\# T(x)_\CO} > \frac{d}{2}.} 
	Then $\phi^{-1}(\BA_K)$ is absolute winning.
\end{Cor}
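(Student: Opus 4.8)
The plan is to deduce Corollary~\ref{Cor:EGL} from Theorem~\ref{Thm:Main} by transporting Alice's winning strategy in the $\Hc_K$-absolute game on $K_S$ into a winning strategy for the ordinary (point) absolute game on $[0, 1]$. The bridge is a transversality property: I will show that, under the stated hypothesis, for every $\Lc \in \Hc_K$ the $\phi$-preimage of a $\rho$-neighborhood of $\Lc$ is, locally, contained in an interval of length $O(\rho)$. Granting this, each deletion by Alice of a $\rho$-neighborhood of some $\Lc \in \Hc_K$ in the $K_S$-game corresponds to the deletion of a comparably small sub-interval of $[0, 1]$, i.e.\ to a legal move in the point-absolute game on the line. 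This is exactly the input required by the restriction/pullback machinery for $\Hc$-absolute games developed in \S\ref{Subsec:H-absolute game}, which then yields that $\phi^{-1}(\BA_K)$ is absolute winning.

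The combinatorial heart is a pigeonhole on the weight function $w(T) := \#T_\R + 2\prt{\#T_\CO}$, which is additive on disjoint subsets of $S$ and satisfies $w(S) = \#S_\R + 2\prt{\#S_\CO} = d$. Fix a non-exceptional parameter $x$, so that $w(T(x)) > d/2$, and let $\Lc = \hplane{\x}{T} \in \Hc_K$, so that $w(T) > d/2$ as well. If $T$ and $T(x)$ were disjoint we would obtain $w(T) + w(T(x)) = w(T \cup T(x)) \le w(S) = d$, contradicting $w(T) + w(T(x)) > d$. Hence $T \cap T(x) \neq \varnothing$: there is a valuation $v \in T$ with $\phi'_v(x) \neq 0$. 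Geometrically this says that $\phi'(x)$ is not tangent to $\Lc$, since $\Lc$ is parallel to precisely the coordinate directions indexed by $S \smallsetminus T$.

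Next I would upgrade this to a quantitative estimate. Fix a non-exceptional $x_0$; since $\phi'_v(x_0) \neq 0$ for every $v \in T(x_0)$ and $\phi$ is $C^1$, there are a neighborhood $U$ of $x_0$ and $\delta > 0$ with $\abs{\phi'_v(x)} \ge \delta$ for all $x \in U$ and all $v \in T(x_0)$. Given any $\Lc = \hplane{\x}{T} \in \Hc_K$, choose $v \in T \cap T(x_0)$ by the pigeonhole; after shrinking $U$ if necessary, the scalar map $x \mapsto \phi_v(x)$ is bi-Lipschitz onto its image. Because membership of $\phi(x)$ in the $\rho$-neighborhood of $\Lc$ forces $\abs{\phi_v(x) - x_v} \le \rho$ (the sup-norm distance to $\Lc$ being $\max_{v \in T}\abs{\phi_v(x) - x_v}$), the set $\phi^{-1}\big(\{\y : \dist(\y, \Lc) \le \rho\}\big) \cap U$ lies in an interval of length at most $2\rho/\delta$. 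This is the uniform transversality, with constant depending only on $U$, needed to run the pullback argument on $U$.

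The remaining, and main, difficulty is global: reconciling this purely local transversality with a single game on all of $[0, 1]$. Because $\phi$ is merely $C^1$, the constant $\delta$ cannot be chosen uniformly, and the exceptional set $C$ where $w(T(x)) \le d/2$, though countable, may be dense, so a finite subcover is unavailable. I would handle this using the two structural features of absolute winning established in \S\ref{Subsec:H-absolute game}: locality (a set is absolute winning on $[0, 1]$ once it is absolute winning in a neighborhood of each point) and stability under countable intersections. The complement $[0, 1] \smallsetminus C$ is absolute winning since $C$ is countable; intersecting Alice's pulled-back strategy with one that avoids $C$ confines the tail of the play to a neighborhood $U$ of a non-exceptional point, where the quantitative transversality above legitimizes the simulation, so the limiting parameter lands in $\phi^{-1}(\BA_K)$. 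The one step I expect to demand genuine care is the scale-matching inside the transfer—keeping Bob's admissible radii on the line and the induced balls in $K_S$ comparable throughout, so that both players' moves remain legal at every stage.
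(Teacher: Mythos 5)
Your main case is essentially the paper's own route. The pigeonhole on the weight $\#T_\R + 2\prt{\#T_\CO}$ is exactly the non-tangency verification needed to apply Proposition~\ref{Prop:H-diffuse curve}, after which Theorem~\ref{Thm:Main} together with Proposition~\ref{Prop:Inheritance} makes $\BA_K$ $\Hc_K$-absolute winning on $\phi([0,1])$; and your local estimate (the $\phi$-preimage of a $\rho$-neighborhood of $\Lc \in \Hc_K$ lies in an interval of length $O(\rho/\delta)$, using that $\dist{\y, \Lc}$ is controlled by $\max_{v \in T}\abs{y_v - x_v}$) is the same computation the paper carries out with the constants $a, b, c$ when it transfers a positional strategy from $\phi([0,1])$ to the point game on $[0,1]$. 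Note, though, that there is no general ``restriction/pullback machinery'' in \S\ref{Subsec:H-absolute game} to cite: the paper performs this transfer by hand inside the proof, with explicit scale-matching ($\beta' = \beta \cdot \min\cbrk{1, \frac{ac}{4b\sqrt d}}$, balls $B'_n = \prt{\phi(t_n), r_n b \sqrt d}$), which is precisely the step you flag but do not execute. It also reduces to injective pieces of $\phi$ first (when $D = \varnothing$ the curve is an immersion, since $T(t) \neq \varnothing$).

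The genuine gap is your treatment of the exceptional set $D = \cbrk{t : \#T(t)_\R + 2\prt{\#T(t)_\CO} \le \frac d2}$. First, your worry that $D$ ``may be dense'' is unfounded, and missing why is the crux: since $\phi'$ is continuous, a small perturbation of $t$ can only enlarge $T(t)$, so $D$ is \emph{closed} (the paper exhibits it as a finite union of sets $\bigcap_{v \in T}\prt{\phi'_v}^{-1}(0)$); a countable closed set is nowhere dense. Second, the ``locality'' principle you invoke for absolute winning is established nowhere in the paper, and your substitute argument is circular: Alice must make legal pulled-back moves at \emph{every finite stage}, yet the transversality constant $\delta$ you would use is attached to a neighborhood of the limit point, which she cannot know during play; as long as Bob's ball meets regions where the relevant $\phi'_v$ degenerate, the preimage of a deleted $\Lc$-neighborhood need not fit inside a legal interval, and the simulation breaks. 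Nor can you ``intersect the pulled-back strategy with one avoiding $C$'': Proposition~\ref{Prop:Countable Intersection} combines sets \emph{already known} to be winning, which for $\phi^{-1}(\BA_K)$ is exactly what is at issue. The paper closes this by showing each $W_n := \phi^{-1}(\BA_K) \cup \bigcup_{t \in D}\prt{\prt{t - \frac1n, t + \frac1n} \cap [0,1]}$ is absolute winning via the $D = \varnothing$ case (uniform transversality holds off the open $\frac1n$-neighborhood of the closed set $D$), using closedness of $D$ to get $\phi^{-1}(\BA_K) \cup D = \bigcap_n W_n$, then applying Proposition~\ref{Prop:Countable Intersection} and finally Proposition~\ref{Prop:Remove Countable} to delete the countable set $D$. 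Your proof needs these steps (or an actual proof of a locality lemma) to go through.
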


Moreover, notice that for any $\Lc = \hplane{\x}{T}\in \Hc_{K} $,
\eq{HDim}{\dim_{\R} (\Lc ) = d - (\# T_\R) - 2(\# T_\CO) = (\# \prt{S \smallsetminus T}_\R) + 2(\# \prt{S \smallsetminus T}_\CO) < \frac{d}{2}.}
So when $d = 1, 2$, or when $K$ is totally complex and $d = 4$,   $\Hc_{K}$ consists of only points $(0$-dimensional subspaces), and hence the $\Hc_{K}$-absolute game coincides with the absolute game defined in \cite{McMullen10}. Thus we have the following extension of McMullen's result:

\begin{Cor} \lab{Cor:AW}
	If $d = 1, 2$, or $K$ is totally complex quartic number field, then $\BA_{K}$ is absolute winning.
\end{Cor}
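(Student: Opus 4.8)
The plan is to deduce the corollary directly from Theorem~\ref{Thm:Main}, the point being that in each of the three listed cases the collection $\Hc_{K}$ collapses to a family of single points, so that the $\Hc_{K}$-absolute game becomes a sub-game of McMullen's absolute game. Write $r_1 = \#S_\R$ and $r_2 = \#S_\CO$, so that $d = r_1 + 2r_2$. First I would use the dimension formula~\eqr{HDim} to record that an element $\Lc = \hplane{\x}{T}$ of $\Hc_{K}$ is a single point exactly when $\#\prt{S \smallsetminus T}_\R + 2\#\prt{S \smallsetminus T}_\CO = 0$, i.e.\ when $T = S$; in that case $\Lc = \cbrk{\x}$ with $\x \in \iv[S]{K}$.

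The substance of the argument is the elementary check that the defining inequality $\#T_\R + 2\#T_\CO > d/2$ forces $T = S$ in each listed case. For a proper subset $T \subsetneq S$, the quantity $\#T_\R + 2\#T_\CO$ is largest when $T$ omits a single cheapest place, which lowers the value by $1$ if $r_1 \geq 1$ and by $2$ if $r_1 = 0$; thus its maximum over all $T \subsetneq S$ is $d - 1$ when $r_1 \geq 1$ and $d - 2$ when $r_1 = 0$. Running through the admissible triples $(d, r_1, r_2) \in \cbrk{(1,1,0),\,(2,2,0),\,(2,0,1),\,(4,0,2)}$, this maximum never exceeds $d/2$, so the strict inequality fails for every proper $T$ and only $T = S$ survives. (This also pinpoints why $d = 3$ is excluded: both a totally real cubic and one with $r_1 = r_2 = 1$ admit a proper $T$ with value exceeding $3/2$, yielding a positive-dimensional $\Lc$.)

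It remains to pass from the $\Hc_{K}$-absolute winning property to plain absolute winning. Since every $\Lc \in \Hc_{K}$ is a singleton, the neighborhood of $\Lc$ that Alice deletes is simply a metric ball, so a play of the $\Hc_{K}$-absolute game is literally a play of the absolute game of \cite{McMullen10} in which Alice merely centers her deleted balls at points of $\iv[S]{K}$. As $\Hc_{K}$ sits inside the collection of all singletons, enlarging the allowed targets from $\Hc_{K}$ to all points only adds legal moves for Alice, so any winning strategy in the $\Hc_{K}$-game remains a winning strategy in the absolute game. Invoking Theorem~\ref{Thm:Main} then gives that $\BA_{K}$ is absolute winning. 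I expect no genuine obstacle here; the single point deserving care is this monotonicity of the winning property under enlarging $\Hc$, which is immediate since a strategy built from a smaller set of moves stays legal once more moves are available.
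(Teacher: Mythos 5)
Your proposal is correct and takes essentially the same route as the paper, which likewise deduces the corollary from Theorem~\ref{Thm:Main} by using the dimension count \eqr{HDim} to see that in each listed case every $\Lc \in \Hc_{K}$ is a single point, so that the $\Hc_{K}$-absolute game reduces to McMullen's absolute game. Your explicit monotonicity remark --- that passing from singletons centered in $\iv[S]{K}$ to all singletons only adds legal moves for Alice while leaving Bob's constraints unchanged --- is a slightly more careful justification of the paper's one-line assertion that the two games coincide.
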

In particular, this implies that in those cases $\phi^{-1} \prt{\BA_{K}} $ is winning for any smooth curve $\phi : [0, 1] \to K_{S}$, regardless of directions in which its derivative is zero.  Moreover, in view of the inheritance property 
(see \S\ref{Subsec:H-absolute game} for more details), we obtain the following corollary strengthening Theorem~\ref{Thm:ESK}:

\begin{Cor} \lab{Cor:ESK}
	Let $K$ be an imaginary quadratic number field, and $Y \subseteq \CO \cong K_S$ be the support of an Ahlfors regular measure, then $\BA_K$ is absolute winning on $Y$.
\end{Cor}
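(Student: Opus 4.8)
The plan is to deduce the corollary directly from Corollary~\ref{Cor:AW} together with the inheritance property of $\Hc$-absolute winning sets established in \S\ref{Subsec:H-absolute game}. Since $K$ is imaginary quadratic we have $d = 2$ and $S$ consists of a single complex place, so $K_S \cong \CO$; the dimension count \eqr{HDim} then forces every member of $\Hc_K$ to be $0$-dimensional, so that $\Hc_K$ is simply the collection of points $\iv[S]{K} \subseteq \CO$. As observed after \eqr{HDim}, the $\Hc_K$-absolute game consequently coincides with McMullen's absolute game, and Corollary~\ref{Cor:AW} tells us that $\BA_K$ is absolute winning on all of $\CO \cong K_S$. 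Note that no hypothesis on the class number of $K$ is needed here, which is precisely how this improves on Theorem~\ref{Thm:ESK}.

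It remains to transfer the absolute winning property from the ambient space $\CO$ to the subset $Y = \supp\mu$. For this I would invoke the inheritance lemma of \S\ref{Subsec:H-absolute game}: an $\Hc$-absolute winning subset of a complete metric space restricts to an absolute winning subset of any $Y$ supporting an Ahlfors regular measure, provided each $\Lc \in \Hc$ meets $Y$ in a set of Hausdorff dimension strictly smaller than $\dim Y$. In our situation the members of $\Hc_K$ are points, while $Y = \supp\mu$ has dimension $\delta > 0$ by \eqr{Ahlfors}; thus each point of $\Hc_K$ is $\mu$-negligible and lower-dimensional in $Y$, the hypothesis of the lemma is met, and $\BA_K \cap Y$ is absolute winning on $Y$.

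The substance of the proof is therefore entirely contained in the inheritance lemma, whose point is that Alice's absolute winning strategy on $\CO$ can be replayed inside the game played on $Y$. The mechanism is that Ahlfors regularity makes $Y$ diffuse: the two-sided bound \eqr{Ahlfors} guarantees that after Alice deletes a small neighborhood of a point, Bob still has admissible balls centered in $Y$ available at every stage, and the comparability of the $\mu$-masses of nested balls lets one reconcile the contraction rates of the two games. I expect this bookkeeping---matching Alice's moves in $\CO$ to legal moves in the restricted game on $Y$---to be the only genuine obstacle; once it is carried out in \S\ref{Subsec:H-absolute game}, the corollary follows by specializing the inheritance lemma to $X = \CO$, $\Hc = \Hc_K$, and the given $Y$.
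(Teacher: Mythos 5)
Your proposal is correct and takes essentially the same route as the paper: the paper likewise combines Corollary~\ref{Cor:AW} with the inheritance property (Proposition~\ref{Prop:Inheritance}), using Example~\ref{Eg:Ahlfors} to check that $Y = \supp\mu$ is diffuse with respect to the collection of singletons. One cosmetic remark: the hypothesis of Proposition~\ref{Prop:Inheritance} is $\Hc$-diffuseness of $Y$, not the Hausdorff-dimension condition you state in your second paragraph, but your final paragraph supplies exactly the right verification via the two-sided bound \eqr{Ahlfors}.
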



The structure of this paper is as follows.  In \S\ref{Sec:Number field}  we discuss the setting of \da\ in number fields  in more details.  Our main tools, the height function and Dani's correspondence (Proposition~\ref{Prop:Dani Corr}), are described in \S\ref{Subsec:BA}.  In \S\ref{Sec:Diffuseness} we define $\Hc$-diffuse sets, and  in \S\ref{Sec:Schmidt game}  we  introduce Schmidt game and the $\Hc$-absolute game and derive Corollaries \ref{Cor:EGL} and \ref{Cor:ESK} from the main theorem.  The proof of Theorem \ref{Thm:Main} is given in \S\ref{Sec:Proof}.  In ~\hyperref[Sec:Appx A]{Appendix A}, we will show that the badly approximable $S$-numbers considered by Hattori~\cite{Hattori07} are in fact the same as $\BA_K$.  And finally, in~\hyperref[Sec:Appx B]{Appendix B} we give the proofs of basic properties of $\Hc$-absolute winning  sets.

\subsection*{Acknowledgements}  We thank S.G.~Dani, M.~Einsiedler, L.~Fishman, A.~Ghosh, E.~Lindenstrauss, K.~Merrill and G.~Tomanov for helpful discussions, and the MSRI for its hospitality during Spring 2015.


\section{Diophantine approximation 
in number fields} \lab{Sec:Number field}
\subsection{Dirichlet Theorem}
\lab{Subsec:DT}
As in classical theory of Diophantine approximation, the justification of our definition of badly approximable $S$-numbers~\eqr{BA in K} is the following version of Dirichlet's Theorem:

\begin{Prop}[Strong Dirichlet Theorem]
	\lab{Prop:Strong Dirichlet}
	There exists a constant $C = C_K > 0$ depending only on $K$, such that for every $\x \in K_{S}$ and for every $Q > 0$, there exists $p \in \BO$, $q \in \BO \smallsetminus \cbrk{0}$ with 
	\eq{Strong Dirichlet}{\norm{\iv[S]{q} \cdot \x + \iv[S]{p}} \leq C Q^{-1} \text{\quad and \quad} \norm{\iv[S]{q}} \leq Q.}
\end{Prop}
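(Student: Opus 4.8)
The plan is to derive the statement from Minkowski's convex body theorem applied to a suitable lattice in $K_{S}\times K_{S}\cong\R^{2d}$. First I would form
\[
\Lambda:=\cbrk{\prt{\iv[S]{q}\cdot\x+\iv[S]{p},\,\iv[S]{q}}:p,q\in\BO}\subseteq K_{S}\times K_{S}.
\]
Since $\iv[S]{\BO}$ is a lattice in $K_{S}$, the map $(q,p)\mapsto(\iv[S]{q},\iv[S]{p})$ sends $\BO\times\BO$ isomorphically onto the lattice $\iv[S]{\BO}\times\iv[S]{\BO}$, and $\Lambda$ is the image of the latter under the $\R$-linear map $(\vect{a},\vect{b})\mapsto(\vect{a}\cdot\x+\vect{b},\,\vect{a})$. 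Swapping the two $K_{S}$-coordinates turns the matrix of this map into a block-unitriangular one, so its determinant is $\pm1$; hence $\Lambda$ is a lattice whose covolume equals $\prt{\operatorname{covol}\iv[S]{\BO}}^{2}$, a positive number depending only on $K$.

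The second step is to apply Minkowski's theorem to the symmetric convex body
\[
B_{Q}:=\cbrk{(\vect{a},\vect{b})\in K_{S}\times K_{S}:\norm{\vect{a}}\le CQ^{-1},\ \norm{\vect{b}}\le Q}.
\]
The key point is that $\operatorname{vol}(B_{Q})$ does not depend on $Q$: the norm-ball of radius $R$ in $K_{S}$ has volume $\kappa_{K}R^{d}$ with $\kappa_{K}=2^{\#S_\R}\pi^{\#S_\CO}$ (a product of intervals of length $2R$ over the real places and disks of radius $R$ over the complex ones), so $\operatorname{vol}(B_{Q})=\kappa_{K}^{2}\prt{CQ^{-1}\cdot Q}^{d}=\kappa_{K}^{2}C^{d}$. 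Choosing the constant $C=C_{K}$ large enough (in terms of $\kappa_{K}$ and $\operatorname{covol}\iv[S]{\BO}$, hence only of $K$) that $\kappa_{K}^{2}C^{d}>2^{2d}\operatorname{covol}(\Lambda)$, Minkowski's theorem yields a nonzero point $\prt{\iv[S]{q}\cdot\x+\iv[S]{p},\,\iv[S]{q}}\in\Lambda\cap B_{Q}$, which is precisely a pair $(p,q)$ satisfying the two inequalities in~\eqr{Strong Dirichlet}.

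The main obstacle is the nondegeneracy condition $q\neq0$. If $q=0$ the chosen point collapses to $(\iv[S]{p},0)$ with $p\neq0$, but the product formula gives $\prod_{v\in S}\abs{\iv[v]{p}}^{[K_{v}:\R]}=\abs{N_{K/\Q}(p)}\ge1$, whence $\norm{\iv[S]{p}}\ge1$ for every nonzero $p\in\BO$. Consequently, once $CQ^{-1}<1$ the bound $\norm{\iv[S]{p}}\le CQ^{-1}$ forces $p=0$, a contradiction, so every nonzero point of $\Lambda\cap B_{Q}$ automatically has $q\neq0$. This settles all sufficiently large $Q$, which is exactly the regime relevant to Dani's correspondence; for the remaining bounded range of $Q$ (where, I would note, an admissible $q$ exists only once $Q$ is at least the first minimum of $\iv[S]{\BO}$) one can make a direct elementary choice of $(p,q)$ — e.g.\ take $q$ to be a nonzero element of minimal norm and choose $p$ within the covering radius of $\iv[S]{\BO}$ — after enlarging $C_{K}$ if necessary, since the constant may only be increased. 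I therefore regard the product-formula nondegeneracy for large $Q$ as the crux of the argument.
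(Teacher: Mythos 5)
Your proof is correct, but it takes a different route from the paper's, which is a one-line deduction from Burger's geometry-of-numbers result (Lemma~\ref{Lem:Burger}): the paper applies that lemma with $\delta_v = Q$ and $\varepsilon_v = CQ^{-1}$ for all $v$, where $C = \prt{(2/\pi)^{2\prt{\#S_\CO}}\abs{D_K}}^{1/d}$, and the nondegeneracy $q \neq 0$ is built into Burger's statement, so nothing further is checked. What you have done, in effect, is reprove from scratch the symmetric special case of Burger's lemma: your lattice $\Lambda \subseteq K_S \times K_S$ of covolume $\prt{\operatorname{covol}\iv[S]{\BO}}^2$, the $Q$-independence of $\operatorname{vol}(B_Q)$, and Minkowski's convex body theorem are exactly the ingredients behind that lemma (which additionally lets the bounds $\varepsilon_v, \delta_v$ vary with the place $v$ — flexibility not needed here, since the sup-norm statement~\eqr{Strong Dirichlet} is symmetric across places), and your product-formula argument ruling out $q = 0$ once $CQ^{-1} < 1$ replaces the corresponding step hidden inside Burger's proof. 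Your version is in one respect more careful than the paper's: Burger's lemma requires $0 < \varepsilon_v < 1 \leq \delta_v$, so the paper's deduction literally covers only $Q > \max\cbrk{1, C}$, and for $Q < 1$ the proposition as stated is actually false, since every nonzero $q \in \BO$ satisfies $\norm{\iv[S]{q}} \geq \abs{N(q)}^{1/d} \geq 1$ — you correctly flag this (the first minimum of $\iv[S]{\BO}$ is $1$, attained at $q = 1$) and patch the remaining bounded range $1 \leq Q \leq C$ by taking $q$ of minimal norm and $p$ within the covering radius, enlarging the constant, which is legitimate because the Minkowski step's threshold depends on the original constant while the conclusion only improves as $C$ grows. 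This gap is harmless for the paper's purposes, as Theorem~\ref{Thm:Weak Dirichlet} and the Dani correspondence use only large $Q$, but your self-contained argument both removes the dependence on Burger's lemma and makes the range of validity explicit.
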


For more notations from algebraic number theory, let us denote the \emph{local degree} at $v \in S$ by:
\eq{}{d_v := [K_v : \Q_v] = [K_v : \R] = \begin{cases}	1	&, v \in S_\R \\ 2	&, v \in S_\CO \end{cases}.}
And for $a \in \BO$, (the absolute value of) the field norm of $a$ is defined by:
\eq{}{|N(a)| = \prod_{v \in S} |\iv[v]{a}|^{d_v}.}

To prove Proposition~\ref{Prop:Strong Dirichlet}, we will make use of the following (special case of) lemma of Burger~\cite{Burger92}:
	
\begin{Lem}[{\cite[Lemma 5.1]{Burger92}}] \lab{Lem:Burger}
	Let $\x \in K_S$ and for each $v \in S$, let $0 < \varepsilon_v < 1 \leq \delta_v$ such that
	\eq{}{\prod_{v \in S} \prt{\delta_v \varepsilon_v}^{d_v} \geq \prt{\frac{2}{\pi}}^{2(\#S_\CO)} \abs{D_K},}
	where $D_K$ is the discriminant of $K$.
	Then there exists $p, q \in \BO$, $q \neq 0$, satisfying
	\eq{}{\abs{\iv[v]{q} x_v + \iv[v]{p}} \leq \varepsilon_v \text{\quad and \quad} \abs{\iv[v]{q}} \leq \delta_v, \text{\quad for
	 all } v \in S.}
\end{Lem}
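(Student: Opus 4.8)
The plan is to deduce the lemma from Minkowski's convex body theorem, applied to an auxiliary lattice in $K_S \times K_S \cong \R^{2d}$. First I would introduce the $\R$-linear ``shear'' map $\Psi : K_S \times K_S \to K_S \times K_S$, $\Psi(\mathbf{u},\mathbf{w}) = (\mathbf{u} + \mathbf{w}\cdot\x,\ \mathbf{w})$, and set $\Lambda := \Psi\prt{\iv[S]{\BO}\times\iv[S]{\BO}} = \cbrk{\prt{\iv[S]{p} + \iv[S]{q}\cdot\x,\ \iv[S]{q}} : p,q \in \BO}$; since $\iv[S]{\BO}$ is a lattice in $K_S$, this is a lattice in $K_S\times K_S$. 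As the target region I would take the symmetric convex body $B$ of all $(\mathbf{u},\mathbf{w})$ with $\abs{u_v}\le\varepsilon_v$ and $\abs{w_v}\le\delta_v$ for every $v\in S$. By construction a point $\Psi\prt{\iv[S]{p},\iv[S]{q}}$ lies in $B$ exactly when the pair $(p,q)$ satisfies the two families of inequalities in the conclusion, so it suffices to produce a nonzero element of $\Lambda\cap B$ whose second coordinate is nonzero.

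The crux is a bookkeeping of volumes arranged so that the hypothesis becomes precisely the Minkowski inequality $\operatorname{vol}(B)\ge 2^{2d}\operatorname{covol}(\Lambda)$. Since $\Psi$ is block-triangular with identity blocks on the diagonal, it has determinant $1$, whence $\operatorname{covol}(\Lambda) = \prt{\operatorname{covol}\iv[S]{\BO}}^2 = 2^{-2(\#S_\CO)}\abs{D_K}$, using the standard value $\operatorname{covol}\iv[S]{\BO} = 2^{-\#S_\CO}\sqrt{\abs{D_K}}$. On the other side, at a real place the constraints cut out intervals contributing a factor $4\varepsilon_v\delta_v$, and at a complex place they cut out disks contributing $\pi^2\varepsilon_v^2\delta_v^2$, so $\operatorname{vol}(B) = 4^{\#S_\R}\pi^{2(\#S_\CO)}\prod_{v\in S}\prt{\varepsilon_v\delta_v}^{d_v}$. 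Writing $2^{2d} = 2^{2(\#S_\R)+4(\#S_\CO)}$ and cancelling the common factors of $4^{\#S_\R}$ and $\pi^{2(\#S_\CO)}$, the inequality $\operatorname{vol}(B)\ge 2^{2d}\operatorname{covol}(\Lambda)$ collapses exactly to $\prod_{v\in S}\prt{\varepsilon_v\delta_v}^{d_v}\ge\prt{2/\pi}^{2(\#S_\CO)}\abs{D_K}$, i.e.\ to the standing assumption.

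Granting this, Minkowski's theorem --- in the form valid for a closed symmetric convex body whose volume is at least $2^{2d}$ times the covolume --- yields a nonzero lattice point in $B$, i.e.\ a pair $(p,q)\in\BO^2$, not both zero, with $\abs{\iv[v]{q}x_v + \iv[v]{p}}\le\varepsilon_v$ and $\abs{\iv[v]{q}}\le\delta_v$ for all $v$. It then remains to exclude $q = 0$: were $q = 0$ we would have $p\ne 0$, and the first family of inequalities would force $\abs{N(p)} = \prod_{v\in S}\abs{\iv[v]{p}}^{d_v}\le\prod_{v\in S}\varepsilon_v^{d_v} < 1$, contradicting $\abs{N(p)}\ge 1$ for a nonzero algebraic integer. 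This is exactly where the hypothesis $\varepsilon_v < 1$ is used --- it is what forces $q\ne 0$.

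I expect the main obstacle to be not conceptual but a matter of fixing normalizations so that all constants land correctly. One must pin down the Lebesgue measure on $K_S$ (identifying each complex $K_v$ with $\R^2$ in the usual way), confirm the covolume formula $\operatorname{covol}\iv[S]{\BO} = 2^{-\#S_\CO}\sqrt{\abs{D_K}}$ under that normalization, and track the factors of $2$, $4$, and $\pi$ carefully enough that they cancel to reproduce the constant $\prt{2/\pi}^{2(\#S_\CO)}\abs{D_K}$ on the nose. A smaller technical point is that, because the conclusion uses the closed inequalities $\le\varepsilon_v$, $\le\delta_v$, one needs the non-strict (boundary) form of Minkowski's theorem; this follows from the strict version applied to the dilates $(1+t)B$ as $t\to 0^+$, using that $\Lambda\cap B$ is finite.
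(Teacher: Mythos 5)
Your proof is correct: the shear lattice $\Psi\prt{\iv[S]{\BO}\times\iv[S]{\BO}}$, the volume bookkeeping ($4\varepsilon_v\delta_v$ per real place, $\pi^2\prt{\varepsilon_v\delta_v}^2$ per complex place, covolume $2^{-2\prt{\#S_\CO}}\abs{D_K}$ from $\operatorname{covol}\,\iv[S]{\BO}=2^{-\#S_\CO}\sqrt{\abs{D_K}}$), the compact-body form of Minkowski's theorem, and the norm argument $\abs{N(p)}\geq 1$ excluding $q=0$ all check out, with the hypothesis landing exactly on the Minkowski threshold $\operatorname{vol}(B)\geq 2^{2d}\operatorname{covol}(\Lambda)$. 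The paper itself gives no proof of this statement --- it is quoted verbatim as a special case of Burger's Lemma 5.1 --- and Burger's original argument is the same geometry-of-numbers computation, so your route coincides with the source's.
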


\begin{proof}[Proof of Proposition~\ref{Prop:Strong Dirichlet}]
	Let
	\eq{}{C = \prt{\prt{\frac{2}{\pi}}^{2\prt{\# S_\CO}} \abs{D_K}}^{1/d},}
	then by applying Lemma~\ref{Lem:Burger} with
	\eq{}{\delta_v = Q \text{\quad and \quad} \varepsilon_v = CQ^{-1} \text{\quad for all } v \in S,}
	we can find $p \in \BO$, $q \in \BO \smallsetminus \cbrk{0}$ satisfying~\eqr{Strong Dirichlet}.
\end{proof}

\begin{Thm}[Weak Dirichlet Theorem]
	\lab{Thm:Weak Dirichlet}
	There is a constant $C = C_K > 0$ depending only on $K$, such that for every $\x \in K_{S}$, there are infinitely many $p, q \in \BO, q \neq 0$ satisfying:
	\eq{DT}{\norm{\iv[S]{q} \cdot \x + \iv[S]{p}} \leq C \norm{\iv[v]{q}}^{-1}. }
\end{Thm}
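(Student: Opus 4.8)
The plan is to deduce the statement from the Strong Dirichlet Theorem (Proposition~\ref{Prop:Strong Dirichlet}) by the classical ``iterate and improve'' argument, after separating the rational from the irrational case. The key preliminary observation is that \emph{every} pair produced by Strong Dirichlet already satisfies \eqr{DT} with the same constant $C = C_K$. Indeed, if $p \in \BO$, $q \in \BO \smallsetminus \cbrk{0}$ satisfy $\norm{\iv[S]{q} \cdot \x + \iv[S]{p}} \leq C Q^{-1}$ and $\norm{\iv[S]{q}} \leq Q$, then $\norm{\iv[S]{q}} > 0$ (since $q \neq 0$ forces $\iv[v]{q} \neq 0$ for all $v \in S$), so $\norm{\iv[S]{q}}^{-1} \geq Q^{-1}$ and hence $\norm{\iv[S]{q} \cdot \x + \iv[S]{p}} \leq C Q^{-1} \leq C \norm{\iv[S]{q}}^{-1}$. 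Thus each Strong Dirichlet solution is a solution of \eqr{DT}, and the only remaining task is to manufacture infinitely many \emph{distinct} such pairs.

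To organize this, write $\delta(p,q) := \norm{\iv[S]{q} \cdot \x + \iv[S]{p}}$ for $q \neq 0$, and note that $\delta(p,q) = 0$ exactly when $\iv[v]{q} x_v + \iv[v]{p} = 0$ for all $v \in S$, i.e.\ when $x_v = \iv[v]{-p/q}$ for all $v$, which is to say $\x = \iv[S]{-p/q} \in \iv[S]{K}$. I would therefore split into two cases. If $\x \in \iv[S]{K}$, say $\x = \iv[S]{r}$ with $r = a/b$, $a,b \in \BO$, $b \neq 0$, then taking $(p,q) = (-na, nb)$ for $n \in \N$ gives $\iv[S]{q} \cdot \x + \iv[S]{p} = \iv[S]{qr + p} = \iv[S]{0}$, so $\delta(p,q) = 0 \leq C \norm{\iv[S]{q}}^{-1}$ for infinitely many distinct pairs, and we are done.

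In the complementary case $\x \notin \iv[S]{K}$ we have $\delta(p,q) > 0$ for every admissible pair. Suppose, toward a contradiction, that \eqr{DT} had only finitely many solutions $(p_1,q_1), \dots, (p_N, q_N)$, and set $\delta_0 := \min_i \delta(p_i, q_i) > 0$. Choosing $Q$ large enough that $C Q^{-1} < \delta_0$ and applying Proposition~\ref{Prop:Strong Dirichlet} yields a pair $(p,q)$ with $\delta(p,q) \leq C Q^{-1} < \delta_0$; by the preliminary observation this $(p,q)$ solves \eqr{DT}, yet its error is strictly smaller than $\delta_0$, contradicting the minimality of $\delta_0$. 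Hence there are infinitely many solutions in this case as well, completing the proof.

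The derivation of the inequality in \eqr{DT} from Strong Dirichlet is immediate, so I do not expect a serious analytic obstacle. The only genuine content is the standard infinitude argument, whose validity hinges on the approximation error being strictly positive; this is precisely what fails when $\x$ is rational, which is why that case must be isolated and treated by the explicit exact solutions above. Beyond this case distinction and the nondegeneracy $\norm{\iv[S]{q}} > 0$, the argument is routine bookkeeping.
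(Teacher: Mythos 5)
Your proposal is correct and follows essentially the same route as the paper: both deduce \eqref{eq:DT} from Proposition~\ref{Prop:Strong Dirichlet} via the chain $\norm{\iv[S]{q}\cdot\x + \iv[S]{p}} \leq CQ^{-1} \leq C\norm{\iv[S]{q}}^{-1}$, split off the case $\x \in \iv[S]{K}$ (where the paper merely calls the infinitude ``obvious'' and you make it explicit), and obtain infinitely many solutions in the irrational case from the strict positivity of the error. Your minimum-error contradiction is just the contrapositive phrasing of the paper's direct ``let $Q \to \infty$'' argument, so there is no substantive difference.
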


\begin{proof} 
	Note that
	\eq{}{\norm{\iv[S]{q} \cdot \x + \iv[S]{p}} = 0 \iff \x = \iv[S]{\frac{p}{q}}.}
	
	If $\x \in \iv[S]{K}$, then the existence of infinitely many $p, q$ is obvious. 
	Otherwise, for every $p, q \in \BO$ with $q \neq 0$,
	\eq{}{\norm{\iv[S]{q} \cdot \x + \iv[S]{p}} > 0.}
	Moreover, for every $Q > 0$,
	\eq{}{\# \cbrk{q \in \BO: \norm{\iv[S]{q}} \leq Q} < \infty.}
	Hence, by letting $Q \to \infty$ and applying Proposition~\ref{Prop:Strong Dirichlet}, we can find infinitely many $p, q \in \BO$, $q \neq 0,$ such that
	\eq{}{\norm{\iv[S]{q} \cdot \x + \iv[S]{p}} \leq C Q^{-1} \leq C \norm{\iv[S]{q}}^{-1}.}
\end{proof}

Note that $\x \in \BA_K$ 
if and only if the constant $C$ in~\eqr{DT} cannot be replaced by an arbitrarily small constant.  In other words, elements of $\BA_K$ are the witnesses of the optimality of Theorem~\ref{Thm:Weak Dirichlet}.

\begin{Rem}
	\mbox{ }
	\begin{itemize}
	\item[(i)]	When $\#S = 1$, $\norm{\iv[S]{q}}$ can be factored out from the left hand side of~\eqr{DT}:
	\eq{}{\norm{\iv[S]{q} \cdot \x + \iv[S]{p}} = \norm{\iv[S]{q} \cdot \prt{\x + \iv[S]{\frac{p}{q}}}} = \norm{\iv[S]{q}} \cdot \norm{\x + \iv[S]{\frac{p}{q}}},}
	and that implies \eqr{DT} is equivalent to:
	\eq{DT'}{\norm{\x + \iv[S]{\frac{p}{q}}} \leq C \norm{\iv[S]{q}}^{-2}.}
	In particular for this case, $\x \in \BA_K$ if and only if there exists $c > 0$ such that for all $p \in \BO$, $q \in \BO \smallsetminus \cbrk{0}$, 
	\eq{BA'}{\norm{\x + \iv[S]{\frac{p}{q}}} > c \norm{\iv[S]{q}}^{-2}.}
	So for $K = \Q$, \eqr{BA'} is exactly \eqr{BA in R}; and 
	for $K$ as in Theorem~\ref{Thm:ESK}.
	this is the definition used in \cite{EsdahlKristensen10}.
	
	\item[(ii)]	In~\cite{Hattori07}, \eqr{BA'} is used to define badly approximable numbers when $K$ is real quadratic or totally complex quartic.  In fact, it can be shown that in this case~\eqr{BA'} is equivalent to~\eqr{BA in K}.
	\end{itemize}
\end{Rem}


\subsection{Dani Correspondence and the height function } \lab{Subsec:BA}
By abusing 
notation, we let the diagonal embedding $\iv[S]{}: K \to K_S$ be extended to matrices $\iv[S]{} : M_{m, n} (K) \to M_{m, n} (K_S)$ by
\eq{}{\prt{\iv[S]{A}}_{i, j} = \iv[S]{A_{i, j}} \text{\quad for } A = (A_{i, j}) \in M_{m, n}(K).}

We also extend the sup norm in $K_S$ to $K_S^2$:
\eq{}{\norm{\vect{z}} = \max \cbrk{\norm{\z_1}, \norm{\z_2}} \text{\quad where } \vect{z} = \pvect{\z_1}{\z_2} \in K_S^2.}

Also note that a $K_S$-vector $\vect{z} \in K_S^2$ can be viewed in two ways:
\eq{}{\vect{z} = \pvect{\mathbf{z}_1}{\mathbf{z}_2} \in K_S^2 \text{ \quad or \quad} \vect{z} = \prt{\vec{z_v}}_{v \in S} \in \prod_{v \in S} K_v^2.}

By abusing 
 notation again, we also use $\norm{\cdot}$ for the sup norm in $K_v^2$.  In particular,
\eq{}{\norm{\vect{z}} = \max \cbrk{\norm{\z_1}, \norm{\z_2}} = \max_{v \in S} \norm{\vec{z}_v}.}

{We let $K$ act naturally on $K_S^2$ by:
\eq{action}{a \vect{z} = \pvect{\iv[S]{a} \cdot \z_1}{\iv[S]{a} \cdot \z_2}, \text{ \quad for } a \in K, \vect{z} \in K_S^2.}

Let $G = \SL_2 (K_S) \cong \prod_{v \in S} \SL_2(K_v)$, then $\Gamma = \iv[S]{\SL_2(\BO)}$ is a lattice in $G$, and we denote the homogeneous space $G/\Gamma$  by $X_K$.  The isomorphism $K_S \cong \R^d$ induces an embedding $\SL_2(K_S) \hookrightarrow \SL_{2d}(\R)$.  Hence, $X_K$ can be identified with a proper subset of the space $\SL_{2d}(\R)/\SL_{2d}(\Z)$ of unimodular lattices  in $\R^{2d}$.   {Via the map: $g\Gamma \mapsto \iv[S]{g}\BO^2$}, an element $\Lambda \in X_K$ can be identified  
with a discrete free rank $2$ $\BO$-module of $K_S^2$ having a basis $\cbrk{\vect{x}, \vect{y}}$ 
such that for every $v \in S$, $\cbrk{ \vec{x}_v, \vec{y}_v}$ forms a parallelepiped of area 1 in $K_v^2$ (see Section 2 of~\cite{EinsiedlerGhoshLytle13} for more details).

 Following the ideas of Dani, the space $X_K$ was used in {\cite{EinsiedlerGhoshLytle13}} to give an alternative description of badly approximable $S$-numbers. 
Let us associate each $\x \in K_{S}$ with the lattice $\Lambda_\x = u_\x \Gamma \in X_K$, where $u_\x \in G$ is defined by:
\eq{Ax}{(u_\x)_v = \begin{pmatrix} 1 & x_v \\  & 1 \end{pmatrix}.}
Also, for each $t \in \R$, we define $g_t \in G$ by:
\eq{gt}{(g_t)_v = \begin{pmatrix} e^{t} & \\ & e^{-t} \end{pmatrix}.}
It is shown in \cite[Proposition 3.1]{EinsiedlerGhoshLytle13} that 
\eq{danicorr}
{\x \in \BA_K \iff\text{ the orbit  $\cbrk{g_t \Lambda_\x: t \geq 0}$ is bounded in }X_K.}
In $K_S^2$, there is a height function more suitable for our needs than $\norm{\cdot}$, which can be thought as a natural extension of the field norm in $K$.  For a vector $\vect{z} = \pvect{\z_1}{\z_2} \in K_{S}^2$, we define the \emph{height} of $\vect{z}$ to be:
{\eq{Height}{\height{\vect{z}} = \height{\vvect{\z_1}{\z_2}} := \prod_{v \in S} \max \cbrk{ \abs{(\z_1)_v}, \abs{(\z_2)_v}}^{d_v} = \prod_{v \in S} \norm{\vec{z_v}}^{d_v}. }}
{We also use the same notation $\height{\cdot}$ to define the height on $K_S$ similar to~\eqr{Height}:
\eq{}{\height{\x} := \prod_{v \in S} \abs{x_v}^{d_v} \text{ \quad for } \x \in K_S.}}
In particular, if $a \in K$ then
\eq{}{\abs{N(a)} = \height{\iv[S]{a}}.}

The following lemma gives an important property of the height function under the action of the group of units $\BO^\times$:

\begin{Lem}[{\cite[Lemma 2.4]{EinsiedlerGhoshLytle13
}}] \lab{Lem:Height properties}
	There exists a constant $C \geq 1$ depending only on $K$ such that if $\height{\vect{z}} \neq 0$ then there exists a unit $\xi \in \BO^\times$ satisfying:
	\eq{H vs abs}{C^{-1} \height{\vect{z}}^{\frac{1}{d}} \leq \norm{\iv[v]{\xi} \vec{z_v}} \leq C \height{\vect{z}}^{\frac{1}{d}} \text{\quad for all } v \in S.}
\end{Lem}

	Note that just as in~\cite[Lemma 5.10]{KleinbockTomanov03}, this relation holds for higher dimensions and a more general $S$ with a suitable extension of the height function.

This height function provides 
a way to measure the size of a lattice in $K_S^2$.  Namely, let us define
\eq{DeltaH}{\delta_H \prt{\Lambda} = \min \cbrk{\height{\vect{z}} : \vect{z} \in \Lambda \smallsetminus \cbrk{0}}.}

Using the above lemma, it is shown in \cite[Proposition 2.5]{EinsiedlerGhoshLytle13} that 
	 a subset $A \subseteq X_K $  is relatively compact if and only if $\inf \cbrk{\delta_H \prt{\Lambda} : \Lambda \in A} > 0$. Combining it with \eqr{danicorr}, we arrive at

\begin{Prop}[{\cite{EinsiedlerGhoshLytle13}}] \lab{Prop:Dani Corr}
	\eq{}{\x \in \BA_K \iff \inf \cbrk{\delta_H \prt{g_t \Lambda_\x}: t \geq 0} > 0.}
\end{Prop}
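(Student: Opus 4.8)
The plan is to simply chain together the two facts recalled immediately before the statement. By the version of Dani's correspondence in \eqr{danicorr}, membership $\x \in \BA_K$ is equivalent to the boundedness of the forward orbit $\cbrk{g_t \Lambda_\x : t \geq 0}$ in $X_K$, where ``bounded'' means having compact closure, i.e.\ being relatively compact. On the other hand, the relative compactness criterion quoted just above (\cite[Proposition 2.5]{EinsiedlerGhoshLytle13}) characterizes relative compactness of an arbitrary subset $A \subseteq X_K$ by the single condition $\inf\cbrk{\delta_H(\Lambda) : \Lambda \in A} > 0$. So the proof will consist of applying this criterion to the specific set $A = \cbrk{g_t \Lambda_\x : t \geq 0}$.

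Concretely, first I would put $A = \cbrk{g_t \Lambda_\x : t \geq 0} \subseteq X_K$. Applying the relative compactness criterion to this $A$ gives
\[
A \text{ relatively compact} \iff \inf\cbrk{\delta_H(g_t \Lambda_\x) : t \geq 0} > 0.
\]
Then I would invoke \eqr{danicorr}, which identifies $\x \in \BA_K$ with the relative compactness (boundedness) of exactly this $A$. Combining the two equivalences yields
\[
\x \in \BA_K \iff \inf\cbrk{\delta_H(g_t \Lambda_\x) : t \geq 0} > 0,
\]
which is the assertion. No further estimates are needed once both ingredients are in place; in particular $\delta_H(g_t \Lambda_\x) > 0$ for every fixed $t$, since nonzero vectors of a lattice in $X_K$ have nonzero height, so the infimum really is a positive-versus-zero dichotomy.

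Since the statement is essentially a repackaging of two previously established results, there is no serious obstacle at this level of the argument. The real content sits inside the relative compactness criterion, whose proof is the step I would expect to carry the weight: it rests on Lemma~\ref{Lem:Height properties}, comparing the height $\height{\vect{z}}$ with the sup-norm $\norm{\iv[v]{\xi} \vec{z_v}}$ after multiplication by a suitable unit $\xi \in \BO^\times$, together with Mahler's compactness criterion for the ambient space $\SL_{2d}(\R)/\SL_{2d}(\Z)$ under the embedding $\SL_2(K_S) \hookrightarrow \SL_{2d}(\R)$. The unit $\xi$ is exactly what makes $\delta_H$ the correct scale-balanced measurement of a lattice's shortest vector, so that a lower bound on $\delta_H$ translates into a genuine uniform lower bound on lattice-vector norms and hence into Mahler's no-arbitrarily-short-vectors condition.
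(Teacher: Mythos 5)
Your proposal is correct and essentially identical to the paper's own derivation: the paper obtains Proposition~\ref{Prop:Dani Corr} precisely by applying the relative compactness criterion of \cite[Proposition 2.5]{EinsiedlerGhoshLytle13} to the set $A = \cbrk{g_t \Lambda_{\x} : t \geq 0}$ and combining it with the boundedness characterization \eqr{danicorr}, exactly as you chain the two facts. Your closing remark also matches the paper's framing, which credits the real content to the compactness criterion built on Lemma~\ref{Lem:Height properties} together with Mahler's criterion in the ambient space $\SL_{2d}(\R)/\SL_{2d}(\Z)$.
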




\begin{Rem}\label{erg}
	Applying Moore's Ergodic Theorem and arguing similar to~\cite[Section 2]{Dani85} or~\cite[Theorem 8.7]{KleinbockMargulis99}, it can be deduced from Proposition~\ref{Prop:Dani Corr} 
	that $\BA_K$ has Lebesgue measure zero.
\end{Rem}

\section{\texorpdfstring{$\Hc$}{H}-diffuse sets} \lab{Sec:Diffuseness}
Before discussing the modification of Schmidt's game needed for our purposes, in this section we survey the notion of \textsl{diffuseness} introduced in \cite{BroderickFishmanKleinbockReichWeiss12} in order to describe sets which can serve as nice playgrounds for those games. 

Let $(X, \dist{})$ be a complete metric space. Each pair $B = (x, \rho)$, where $x \in X$ and $\rho > 0$, is called a \emph{formal ball in $X$}, and we denote corresponding closed ball in $X$ by:
\eq{}{\B(B) = \B(x, \rho) := \cbrk{y \in X: \dist{x, y} \leq \rho}.}
More generally, if $\Lc$ is a subset of $X$ and $\rho > 0$, we denote by  $\Lc^{\prt{\rho}}$  the \emph{$\rho$-neighborhood of $\Lc$:}
\eq{}{\Lc^{\prt{\rho}} := \cbrk{x \in X: \dist{x, \Lc} \leq \rho}.}

The projection maps from $X\!\times\!\R_{>0}$ to the first and second factors are denoted by $c$ and $r$ respectively:
\eq{}{c(x, \rho) = x \text{ and } r(x, \rho) = \rho \text{\quad for } (x, \rho) \in X\!\times\!\R_{>0}.}

We define a partial order $\preceq$ on the set $X\!\times\R_{>0}$ of formal balls:
\eq{}{B_1 \preceq B_2 \iff \dist{c(B_1), c(B_2)} \leq r(B_2) - r(B_1).}
In particular, $B_1 \preceq B_2$ implies $\B(B_1) \subseteq \B(B_2)$, but the converse might not hold in general. 
Notice that if $X$ is a real Banach space, then $\B(B)$ is uniquely determined by $B$, but in general, there might exist $B_1 \neq B_2$ such that $\B(B_1) = \B(B_2)$.  For instance, we can easily find such $B$'s when $X$ is the Cantor set with the induced metric from $\R$.

Now let $\Hc$ be a non-empty collection of closed subsets of $X$, and pick $0 < \beta < 1$. Following 
\cite{BroderickFishmanKleinbockReichWeiss12}, we say that a subset $Y \subseteq X$ is \emph{$(\Hc, \beta)$-diffuse} if for every $x \in Y$, there exists $\rho_x > 0$ such that for any $(y, \rho) \in Y\!\times\!\R_{>0}$ with $(y, \rho) \preceq (x, \rho_x)$, and for any $\Lc \in \Hc$:
\eq{diffuse}{Y \cap \prt{\B(y, \rho) \smallsetminus \Lc^{(\beta \rho)}} \neq \varnothing.}
Note that $(\Hc, \beta)$-diffuseness implies $(\Hc, \beta')$-diffuseness for any $0 < \beta' \leq \beta$.  $Y$ is said to be \emph{$\Hc$-diffuse} if it is $(\Hc, \beta)$-diffuse for some $0 < \beta < 1$.  

\begin{Rem}
	\mbox{ }
	\begin{itemize}
	\item[(i)]	Our definition of diffuse sets is slightly weaker than the definition used in~\cite{BroderickFishmanKleinbockReichWeiss12}, in that they required $\rho_x$ to be 
	uniformly bounded 
	below for all  $x\in Y$.
	
	\item[(ii)]	The constant $\rho_x$ at first seems to depend also on $\beta$, but notice that if $\rho_x$ works of $\beta$, it also works for any $0 < \beta' \leq \beta$.
	\end{itemize}  
\end{Rem}

In the set-up of \cite{BroderickFishmanKleinbockReichWeiss12} $X = \R^d$ with the Euclidean metric, and  the collection $\Hc$ consisted of all $k$-dimensional affine subspaces of $\R^d$, and the corresponding property was called \emph{$k$-dimensional diffuseness}. Many examples of $k$-dimensionally diffuse sets were exhibited there. For example, it is clear that any $m$-dimensional smooth submanifold of $\R^d$ is $k$-dimensionally diffuse whenever $m > k$. Also if   $\mu$  is an absolutely decaying measure on $\R^d$ (see \cite{KleinbockLindenstraussWeiss04, PollingtonVelani05} for definition) 
then, as shown in   \cite[Proposition 5.1]{BroderickFishmanKleinbockReichWeiss12} $ \supp\,\mu$ is  $k$-dimensionally diffuse for all $1 \le k < d$).

The following two  examples of diffuse sets will be used in the proofs of Corollaries~\ref{Cor:EGL} 
and 
\ref{Cor:ESK} 
in the next section:

\begin{Eg} \lab{Eg:Ahlfors}
	For an arbitrary metric space $X$, let $\Hc = \cbrk{\cbrk{x}: x \in X}$  and let $Y = \supp \mu$ be the support of an Ahlfors regular measure $\mu$ on $X$ (see~\eqr{Ahlfors} for the definition); then $Y$ is $\Hc$-diffuse.  To see this, for every $0 < \beta < \prt{\frac{a}{b}}^{1/\delta}$, $(y, \rho) \in X\!\times\!\R_{> 0}$ with $\rho \leq r_0$, and $x \in X$, write
	\eq{}{\mu \prt{\B(y, \rho) \smallsetminus \B(x, \beta \rho)} \geq a \rho^\delta - b (\beta \rho)^\delta > 0.}
	In particular,
	${Y \cap \prt{\B(y, \rho) \smallsetminus \B(x, \beta \rho)} \neq \varnothing}$; that is, \eqr{diffuse} holds.
\end{Eg}

\begin{Prop} \lab{Prop:H-diffuse curve}
	Let $\mathcal{S}$ be a closed set of linear subspaces in $\R^n$, and let
	\eq{}{\Hc = \cbrk{\Lc + \vec{x}: \Lc \in \mathcal{S}, \vec{x} \in \R^n}.}
	If $\phi: [0, 1] \to \R^n$ is an injective $C^1$ map such that the curve $\phi([0, 1])$ is not tangent to any affine subspace in $\Hc$,  then $\phi([0, 1])$ is $\Hc$-diffuse.
\end{Prop}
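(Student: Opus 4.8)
The plan is to verify the diffuseness condition \eqref{eq:diffuse} directly by exploiting the transversality hypothesis together with the $C^1$ regularity of $\phi$. Fix $x = \phi(t_0) \in \phi([0,1])$. The geometric heart of the matter is this: because $\phi$ is $C^1$ and $\phi([0,1])$ is nowhere tangent to any subspace in $\Hc$, the curve must, near $t_0$, leave every affine subspace $\Lc \in \Hc$ passing near $x$ at a definite linear rate. So first I would aim to produce, for each $t_0$, a radius $\rho_{t_0} > 0$ and a constant $\beta \in (0,1)$ such that whenever $(y,\rho) \preceq (\phi(t_0), \rho_{t_0})$ with $y \in \phi([0,1])$ and $\Lc \in \Hc$, one can find a parameter $t$ with $\phi(t) \in \B(y,\rho)$ but $\mathrm{dist}\big(\phi(t), \Lc\big) > \beta\rho$.

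The key steps, in order, are as follows. First, I would reduce to a local statement by covering $[0,1]$: compactness of $[0,1]$ lets me work on a single parameter value at a time and then, if a uniform bound is desired, take a finite subcover, though the paper's weakened definition of diffuseness permits $\rho_x$ to depend on $x$. Second, I would quantify transversality. Writing $\phi(t) = \phi(t_0) + \phi'(t_0)(t - t_0) + o(|t-t_0|)$, the non-tangency hypothesis says $\phi'(t_0)$ is not contained in the direction space $\Lc_0 \in \mathcal{S}$ of any $\Lc \in \Hc$; since $\mathcal{S}$ is closed, the angle between $\phi'(t_0)$ and each subspace in $\mathcal{S}$ is bounded below by some $\theta_0 > 0$, so the component of $\phi'(t_0)$ orthogonal to any $\Lc_0 \in \mathcal{S}$ has norm at least $\|\phi'(t_0)\| \sin\theta_0 =: \kappa_0 > 0$. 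Third, I would choose $\rho_{t_0}$ small enough that the $o(|t-t_0|)$ error is dominated by the transverse motion on the relevant parameter interval, so that moving a controlled amount in $t$ pushes $\phi(t)$ a definite distance away from any fixed affine subspace. Fourth, given a formal ball $(y,\rho) \preceq (\phi(t_0),\rho_{t_0})$ with $y = \phi(s) \in \phi([0,1])$, I would choose $t$ near $s$ so that $\phi(t)$ stays inside $\B(y,\rho)$ (using the upper Lipschitz control on $\phi$) while its transverse displacement relative to $\Lc$ exceeds $\beta\rho$ (using the lower bound $\kappa_0$); this exhibits a point of $\phi([0,1])$ in $\B(y,\rho)\smallsetminus\Lc^{(\beta\rho)}$.

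The main obstacle I anticipate is making the transversality quantitative \emph{uniformly over all} $\Lc \in \Hc$, not merely for a single subspace. Since $\Hc$ ranges over all translates of all subspaces in the closed set $\mathcal{S}$, I must ensure the lower angle bound $\theta_0$ holds simultaneously for the whole family; this is exactly where closedness of $\mathcal{S}$ is essential, as it makes the relevant infimum of angles a minimum over a compact set and hence strictly positive. A secondary technical point is controlling both the inclusion $\phi(t) \in \B(y,\rho)$ and the escape $\mathrm{dist}(\phi(t),\Lc) > \beta\rho$ with the \emph{same} choice of $t$: this requires balancing the upper bound $\|\phi(t)-\phi(s)\| \le L|t-s|$ against the lower transverse bound, which pins down the admissible range of $\beta$ in terms of $\kappa_0$, $L$, and $\theta_0$. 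Once the local uniform angle bound is secured, the remaining estimates are routine first-order Taylor analysis.
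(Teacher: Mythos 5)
Your plan follows essentially the same route as the paper's proof: both extract a uniform lower bound on the transverse component of $\phi'$ with respect to every subspace in $\mathcal{S}$ (your $\kappa_0 = \norm{\phi'(t_0)}\sin\theta_0$ is the paper's constant $ac$, positive precisely by non-tangency together with closedness, hence compactness, of $\mathcal{S}$), and then balance the Lipschitz upper bound on $\phi$ against this transverse lower bound to produce a point of the curve inside $\B(y,\rho)$ but outside $\Lc^{(\beta\rho)}$, with $\beta$ constrained by the ratio of the two constants. The paper merely packages the final balancing as an arclength comparison --- any subarc trapped in $\B(y,\rho)\cap\Lc^{(\beta\rho)}$ has length less than $\rho$, whereas the arc from $y$ to $\partial\B(y,\rho)$ has length at least $\rho$ --- which is a cleaner form of your single-parameter selection of $t$ near $s$.
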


\begin{proof}
	For every $\Lc \in \mathcal{S}$, let $\pi_{\Lc^\perp}: \R^n \to \Lc^\perp$ be the projection onto its orthogonal complement.  Let
	\eq{}{a = \min_{\substack{0 \leq t \leq 1 \\ \Lc \in \mathcal{S}}} \frac{\norm{\prt{\pi_{\Lc^\perp} \circ \phi}'(t)}}{\norm{\phi'(t)}} \text{ \quad, \quad} b = \max_{0 \leq t \leq 1} \norm{\phi'(t)} \text{ \quad and \quad}  {c = \min_{0 \leq t \leq 1} \norm{\phi'(t)}}.}
Note that $a > 0$ by the non-tangency and compactness of $\mathcal{S}$.
In particular, for every $0 \leq t \leq 1$ and $\Lc \in \mathcal{S}$:
	\eq{}{\norm{\prt{\pi_{\Lc^\perp} \circ \phi}'(t)} \geq  {ac}.}
	
	For every $\vec{x} \in \phi([0, 1])$, we denote $t_{\vec{x}} = \phi^{-1} \prt{\vec{x}}$.  For an arbitrary $\vec{x} \in \phi([0, 1))$ (the proof is similar at the other endpoint), we let $\rho_{\vec{x}} > 0$ be sufficiently small such that 
	\eq{}{\partial \B(\vec{x}, \rho_{\vec{x}}) \cap \phi((t_{\vec{x}}, 1)) \neq \varnothing;}
	and	for $\vec{y}, \vec{z} \in \B(\vec{x}, \rho_{\vec{x}})$ and for every $\Lc \in \mathcal{S}$:
	\eq{}{\frac{\norm{\pi_{\Lc^\perp} \prt{\vec{y} - \vec{z}} }}{\abs{t_{\vec{y}} - t_{\vec{z}}}} \geq  {\frac{ac}{2}}.}
	
	Let $0 < \beta < {\frac{ac}{4b \sqrt{n}}}$, and $(\vec{y}, \rho) \in \phi([0, 1])\!\times\!\R_{>0}$ be arbitrary with $(\vec{y}, \rho) \preceq (\vec{x}, \rho_{\vec{x}})$.  Let
	\eq{}{t_0 = \min \phi^{-1} \prt{\partial \B(\vec{y}, \rho) \cap [t_{\vec{y}}, 1]} = \min \cbrk{t > t_{\vec{y}} : \norm{\phi(t) - \vec{y}} = \rho}.}
	
	Let $\Lc + \vec{z} \in \Hc$ be arbitrary.  For any $0 \leq t_1 < t_2 \leq 1$ such that $\phi(t_1), \phi(t_2) \in \B(\vec{y}, \rho) \cap \prt{\Lc + \vec{z}}^{\prt{\beta \rho}}$, the arclength of $\phi([t_1, t_2])$ is:
	\begin{align*}
	l\big(\phi([t_1, t_2])\big) &= \int_{t_1}^{t_2} \sqrt{\abs{\phi'_1(t)}^2 + ... + \abs{\phi'_n(t)}^2} dt \\
	&\leq \abs{t_2 - t_1} \cdot b\sqrt{n} \\
	&\leq \frac{2}{ac} \norm{\pi_{\Lc^\perp} \prt{ \phi (t_2) - \phi (t_1)}} \cdot b \sqrt{n} \\
	&\leq \frac{2b}{ac} \cdot 2\beta \rho \cdot \sqrt{n} 
	< \rho \leq l\big(\phi([t_{\vec{y}}, t_0])\big).
	\end{align*}
	In particular, for any $\Lc + \vec{z} \in \Hc$:
	\eq{}{\phi([0, 1]) \cap \prt{\B(\vec{y}, \rho) \smallsetminus \prt{\Lc + \vec{z}}^{\prt{\beta \rho}}} \neq \varnothing.}
	This shows that $\phi([0, 1])$ is $\Hc$-diffuse.
\end{proof}

We close the section with the following useful property of diffuse sets:

\begin{Lem}
\lab{Lem:diffuse}
	If $Y$ is $(\Hc, \beta)$-diffuse, then for any $0 < \beta' \leq \frac{\beta}{2 + \beta}$, $x \in Y$, $(y, \rho) \in Y\!\times\R_{>0}$ with $(y, \rho) \preceq (x, \rho_x)$, and any $\Lc \in \Hc$, there exists $z \in Y \cap \B(y, \rho)$ with
	\eq{diffresult}{(z, \beta' \rho) \preceq (y, \rho) \text{~\quad and \quad} \dist{z, \Lc} > 2\beta'\rho.}
\end{Lem}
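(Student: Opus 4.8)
The plan is to invoke $(\Hc, \beta)$-diffuseness not at the formal ball $(y, \rho)$ itself, but at a concentric ball $(y, \rho')$ of a smaller, carefully chosen radius $\rho'$. The reason is that the witness supplied by diffuseness applied directly to $(y, \rho)$ could lie anywhere in $\B(y, \rho)$, even on its boundary, and would only be guaranteed to sit at distance more than $\beta\rho$ from $\Lc$; shrinking the radius to $\rho'$ pulls the witness strictly into the interior of $\B(y, \rho)$, so that a formal subball of radius $\beta'\rho$ fits below it, while arranging $\beta\rho' = 2\beta'\rho$ keeps the separation from $\Lc$ at the required level $2\beta'\rho$.

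Concretely, I would set $\rho' = \frac{2\beta'}{\beta}\rho$. Since $\beta' \leq \frac{\beta}{2+\beta} < \frac{\beta}{2}$, we have $0 < \rho' < \rho$, and from $(y, \rho) \preceq (x, \rho_x)$, i.e.\ $\dist{y, x} \leq \rho_x - \rho$, we get $\dist{y, x} \leq \rho_x - \rho'$, so $(y, \rho') \preceq (x, \rho_x)$. Hence the diffuseness hypothesis applies to $(y, \rho')$ and produces a point $z \in Y \cap \prt{\B(y, \rho') \smallsetminus \Lc^{(\beta\rho')}}$, that is, $z \in Y$ with $\dist{z, y} \leq \rho'$ and $\dist{z, \Lc} > \beta\rho'$.

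It then remains to read off the two conclusions from the choice of $\rho'$. For the separation, $\dist{z, \Lc} > \beta\rho' = 2\beta'\rho$ by construction, which is the second inequality in~\eqr{diffresult}. For the nesting, $\dist{z, y} \leq \rho' = \frac{2\beta'}{\beta}\rho$, and the hypothesis $\beta' \leq \frac{\beta}{2+\beta}$ rearranges (multiply out and regroup) to $\frac{2\beta'}{\beta} \leq 1 - \beta'$, so $\dist{z, y} \leq (1-\beta')\rho = \rho - \beta'\rho$, which is precisely $(z, \beta'\rho) \preceq (y, \rho)$; together with $\B(y, \rho') \subseteq \B(y, \rho)$ this also yields $z \in Y \cap \B(y, \rho)$.

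The only real content is the choice of $\rho'$ together with the observation that the two competing demands on it, namely $\rho' \geq \frac{2\beta'}{\beta}\rho$ (to separate $z$ from $\Lc$ by more than $2\beta'\rho$) and $\rho' \leq (1-\beta')\rho$ (to keep the formal subball $\preceq (y, \rho)$), are simultaneously satisfiable exactly when $\beta' \leq \frac{\beta}{2+\beta}$. Thus the constant in the hypothesis is not incidental but is forced by this compatibility; everything else is the elementary bookkeeping on radii above, and I do not anticipate any genuine obstacle.
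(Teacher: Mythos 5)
Your proposal is correct and is essentially the paper's own argument: the paper also shrinks the ball concentrically and applies diffuseness to $(y,(1-\beta')\rho)$ after first reducing to the extremal case $\beta' = \frac{\beta}{2+\beta}$, at which point $(1-\beta')\rho = \frac{2\beta'}{\beta}\rho$, i.e.\ exactly your choice of $\rho'$. Your version merely handles all $\beta' \leq \frac{\beta}{2+\beta}$ uniformly instead of reducing to the extremal value, which is a cosmetic difference.
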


\begin{proof}
	The proof is identical to the proof of~\cite[Lemma 4.3]{BroderickFishmanKleinbockReichWeiss12} which stated the same result in the $k$-dimensional diffuseness set-up.  It is sufficient to prove the lemma for $\beta' = \frac{\beta}{2 + \beta} < \beta$.  Since $(y, (1 - \beta')\rho) \preceq (y, \rho) \preceq (x, \rho_x)$, there exists
	\eq{}{z \in Y \cap \prt{\B\prt{y, (1 - \beta')\rho} \smallsetminus \Lc^{\prt{\beta(1 - \beta')\rho}}} = Y \cap \prt{\B\prt{y, (1 - \beta')\rho} \smallsetminus \Lc^{\prt{2\beta'\rho}}}.}
	Thus
	\eqr{diffresult} holds.
	\end{proof}
	}

\section{Schmidt game and its variants} \lab{Sec:Schmidt game}
\subsection{Schmidt's $(\alpha, \beta)$-game} \lab{Subsec:Schmidt game}
In~\cite{Schmidt66}, W. Schmidt introduced an infinite game between two players, called Alice and Bob, which has been shown to be a powerful tool in Diophantine approximation.  The setup of Schmidt's game requires the followings:
\begin{enumerate}
	\item	The playground is a complete metric space $(X, \dist{})$.
	\item	Two real numbers $\alpha, \beta$, with $0 < \alpha, \beta < 1$, are parameters associated with Alice and Bob respectively.
	\item	A subset $W \subseteq X$ is Alice's \emph{target set}.
\end{enumerate}

The game proceeds by Alice and Bob alternatively picking $B_1, A_1, B_2, A_2, ...$ with $A_n, B_n \in X\!\times\!\R_{>0}$ satisfying:
\begin{enumerate}[label=(S\arabic*)]
	\item	\lab{S1} $B_n$'s are Bob's choices, $A_n$'s are Alice choices.
	\item	\lab{S2} $r(A_n) = \alpha r(B_n)$ and $A_n \preceq B_n$ for all $n \geq 1$.
	\item	\lab{S3} $r(B_n) = \beta r(A_{n - 1})$ and $B_{n + 1} \preceq A_n$ for all $n \geq 2$.
\end{enumerate}
In particular, the result of a play forms a nested sequence of closed balls: 
\eq{}{\B(B_1) \supseteq \B(A_1) \supseteq \B(B_2) \supseteq \B(A_2) \supseteq \dots}

Since $X$ is complete, and the radii $\lim_{n \to \infty} r(A_n) = \lim_{n \to \infty} r(B_n) = 0$, their intersection is a point, denoted by $x_\infty$:
\eq{}{\bigcap_{n = 1}^\infty \B(A_n) = \bigcap_{n = 1}^\infty \B(B_n) = \cbrk{x_\infty}.}
If Alice has a strategy that guarantees $x_\infty \in W$ regardless of what Bob does, then we say that $W$ is \emph{\abwin}.  If $W$ is \abwin~for every $0 < \beta < 1$, then it is called \emph{\awin}.  And finally, $W$ is called \emph{winning}~if it is \awin~for some $0 < \alpha < 1$.

Schmidt showed that winning sets have remarkable properties:

\begin{Lem}[{\cite{Schmidt66}}]
	\mbox{ }
	\begin{itemize}
		\item[\rm (i)]		If $X = \R^n$ then winning sets are thick.
		\item[\rm (ii)]		A countable intersection of \awin~sets is again \awin.
		\item[\rm (iii)]	If $W$ is \awin~ and $\phi: X \to X$ is bi-Lipschitz, then $\phi(W)$ is $\alpha'$-winning, with $\alpha'$ depends on $\alpha$ and the bi-Lipschitz constant of $\phi$.
	\end{itemize}
\end{Lem}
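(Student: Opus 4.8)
The three assertions are the classical properties of Schmidt's game established in \cite{Schmidt66}, and I would prove each by a separate, self-contained argument: a Cantor-set construction with a \hd\ estimate for (i), an interleaving of strategies for (ii), and a simulation of one game by another for (iii). Throughout, recall that ``winning'' means \awin\ for \emph{some} $\alpha$, and that \awin\ means \abwin\ for \emph{every} $\beta$; I would exploit the latter repeatedly, feeding each auxiliary sub-game whatever value of $\beta$ it happens to require.

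For (i), fix $\alpha$ with $W$ \awin, fix an open set $U\subseteq\Rn$, and let Bob open with a ball $B_1$ satisfying $\B(B_1)\subseteq U$. Rather than follow a single line of play, I would build a tree: after any Alice ball $A$ of radius $\alpha\rho$ (responding to a Bob ball of radius $\rho$), Bob may choose among roughly $N\asymp\beta^{-n}$ pairwise disjoint admissible balls of radius $\alpha\beta\rho$ inside $\B(A)$, and Alice answers each of them according to her winning strategy. Every infinite branch produces a point of $W\cap U$, and the set $E$ of all such points is a Cantor set with branching number $N\asymp\beta^{-n}$ and contraction ratio $\alpha\beta$ at each level. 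A standard mass-distribution lower bound for such sets gives $\dim_H E\ge \frac{\log N}{\log(1/(\alpha\beta))}\asymp\frac{n\log(1/\beta)}{\log(1/\alpha)+\log(1/\beta)}$, which tends to $n$ as $\beta\to 0$. Since $W$ is \awin, hence \abwin\ for arbitrarily small $\beta$, the \hd\ of $W\cap U$ equals $n$; as $U$ was arbitrary, $W$ is thick.

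For (ii), fix $\beta$ and, for each $i$, a strategy realizing the winning property of $W_i$ for the parameter specified below; I would then interleave these. Assign the $m$-th turn to the index $i=v_2(m)+1$, where $v_2$ denotes the $2$-adic valuation. The turns devoted to a fixed index $i$ then form an arithmetic progression of common difference $2^i$, so between two consecutive $W_i$-turns the radius always contracts by the same factor $(\alpha\beta)^{2^i}$. Consequently the sub-play witnessed by index $i$ is a genuine play of the game with parameters $\alpha$ and the \emph{single} value $\beta_i=\alpha^{2^i-1}\beta^{2^i}$, and Alice simply follows her $W_i$-strategy on this sub-play; legality of the nesting $\preceq$ across the skipped turns follows by transitivity of $\preceq$. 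Because $W_i$ is \awin\ it is winning for the value $\beta_i$, so the common limit point $x_\infty$, being the limit of each sub-play, lies in every $W_i$ and hence in $\bigcap_i W_i$. Thus the intersection is \abwin, and as $\beta$ was arbitrary, it is \awin.

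For (iii), let $L\ge 1$ be a bi-Lipschitz constant for $\phi$. Alice, playing for $\phi(W)$ in the image, maintains a shadow game for $W$ in the domain: given Bob's image ball $\B(y,\rho)$, she uses $\B\prt{\phi^{-1}(y),\rho/L}$ as Bob's domain ball (its $\phi$-image lies in $\B(y,\rho)$), applies her winning strategy for $W$ to obtain a domain ball $\B(x_A,\alpha\rho/L)$, and plays $\B\prt{\phi(x_A),\alpha\rho/L^2}$ in the image. This is a legal Alice move for the parameter $\alpha'=\alpha/L^2$: the inclusion $\B\prt{\phi(x_A),\alpha'\rho}\subseteq\B(y,\rho)$ and all $\preceq$-relations hold precisely because $L\ge 1$ supplies the needed slack, and a short computation shows the induced domain game has the constant parameter $\beta/L^2$, for which $W$ is winning. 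Since $\phi$ is a homeomorphism, the image balls shrink to $\phi(x_\infty)$, where $x_\infty\in W$ is the domain limit, so $\phi(x_\infty)\in\phi(W)$; hence $\phi(W)$ is $\alpha'$-winning with $\alpha'=\alpha/L^2$. Of the three parts I expect (i) to carry the real weight: both the packing count $N\asymp\beta^{-n}$ and the rigorous Frostman-type lower bound for $\dim_H E$ require genuine estimates, whereas (ii) and (iii) reduce to bookkeeping once the constant-gap schedule via $v_2$ and the $L\ge 1$ slack are in place.
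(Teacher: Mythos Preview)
The paper does not prove this lemma; it is stated with a bare citation to \cite{Schmidt66} and no argument is given. Your sketches of the three classical arguments are correct in outline, and in particular your $2$-adic interleaving scheme in (ii) is exactly the device the paper itself later adapts (in the proof of Proposition~\ref{Prop:Countable Intersection} in Appendix~B) to establish the countable-intersection property for the $\Hc$-absolute game, so where the paper's methods overlap with this lemma your approach matches.
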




\subsection{$\Hc$-absolute game on $\Hc$-diffuse sets} \lab{Subsec:H-absolute game}
Generalizing the ideas of the absolute game of McMullen~\cite{McMullen10} and the $k$-dimensional absolute game of Broderick et  al.~\cite{BroderickFishmanKleinbockReichWeiss12}, Fishman, Simmons and Urbanski~\cite{FishmanSimmonsUrbanski13} introduced the \emph{$\Hc$-absolute game} which we will describe as follows.

Let $(X, \dist{})$ be a complete metric space,  let $\Hc$ be a non-empty collection of closed subsets of $X$, and pick $0 < \beta < 1$.  For a non-empty closed subset $Y \subseteq X$, 
Alice and Bob play the \emph{$\Hc$-absolute game on $Y$} by alternatively choosing an infinite sequence $B_1, A_1, B_2, A_2, ...$ satisfying the rules below:

\begin{enumerate}[label=(H\arabic*)]
	\item	\lab{H1} Bob chooses $B_n = (c_n, r_n) \in Y\!\times\!\R_{>0}$.
	\item	\lab{H2} Alice chooses $A_n = (\Lc_n, \rho_n) \in \Hc\!\times\!\R_{>0}$ with $0 < \rho_n \leq \beta r_n$.
	\item	\lab{H3} $\beta r_n \leq r_{n + 1} \leq r_n$ and $B_{n + 1} \preceq B_n$.
	\item	\lab{H4} $\dist{c_{n + 1}, \Lc_n} > \rho_n + r_{n + 1}$.
	\item	\lab{H5} If at some point of the game, Bob has no choices $B_{n + 1} \in X\!\times\!\R_{>0}$ satisfying~\ref{H3} and~\ref{H4}, then Bob wins, and the game is terminated.
\end{enumerate}

In particular,   conditions~\ref{H3} and~\ref{H4} imply that $\B(B_{n + 1}) \subseteq \B(B_n) \smallsetminus \Lc_n^{\prt{\rho_n}}$.
Hence, we can think of Alice's move as deleting a neighborhood of a closed subset $\Lc$ in $\Hc$.  

\begin{Rem}
	As a convention, the only neighborhood of the empty set is the empty set itself, and the distance from any point of $X$ to the empty set is infinite.  Hence, the empty set, if in $\Hc$, can be considered as a `dummy move' for Alice.
\end{Rem}

A set $W \subseteq X$ is said to be \emph{$(\Hc, \beta)$-absolute winning on $Y$} if Alice has a strategy to ensure that at every stage of the game, Bob always has at least one choice, and:
\eq{}{W \cap Y \cap \bigcap_{n = 1}^\infty \B(B_n) \neq \varnothing}
regardless of Bob's strategy.  We will say that $W$ is \emph{$\Hc$-absolute winning on $Y$} if there exists $\beta_0 > 0$ such that $W$ is $(\Hc, \beta)$-absolute winning on $Y$ for all  $0 < \beta < \beta_0$.

\begin{Rem}
	When $Y = X$ we will drop `on $Y$' and simply say that $W$ is $(\Hc, \beta)$-absolute winning and $\Hc$-absolute winning accordingly.
\end{Rem}

\begin{Rem}
	It is clear from the rules of the game that for any collection $\Hc$, if $W \subseteq X$ is $\Hc$-absolute winning on $Y$, then $W \cap Y$ is dense in $Y$ with the subspace topology, since that implies $W \cap Y \cap \B(B_1) \neq \varnothing$ for arbitrary $B_1$.
\end{Rem}

\begin{Eg}
	\mbox{}
	\begin{itemize}
		\item[(i)]	When $\Hc = \cbrk{ \cbrk{x}: x \in X}$ is the set of singletons in $X$, the $\Hc$-absolute game is the absolute game considered by McMullen~\cite{McMullen10}.
		\item[(ii)]	When $X = \R^d$ and $\Hc$ is the collection of $k$-dimensional affine subspaces in $\R^d$, we get the $k$-dimensional absolute game of Broderick et  al~\cite{BroderickFishmanKleinbockReichWeiss12}.  See also \cite{BroderickFishmanSimmons13, NesharimSimmons14, KleinbockWeiss15, AnGuanKleinbock15} for other appearances of games of this type.
	\end{itemize}
\end{Eg}

The degenerate case when Alice's moves leave Bob without any legitimate choice at some point of the game can be avoided when $X = Y = \R^n$ by restricting $0 < \beta < 1/3$ as in~\cite{McMullen10, BroderickFishmanKleinbockReichWeiss12}.  For the $k$-dimensional absolute game playing on a subset $Y$ of $\R^n$, Broderick et al.~\cite{BroderickFishmanKleinbockReichWeiss12} showed that the  {$k$-dimensional diffuseness} of $Y$
is a sufficient condition for  the game to last infinitely. More generally,  
Lemma \ref{Lem:diffuse}, essentially taken  from~\cite{BroderickFishmanKleinbockReichWeiss12}, makes sure that Bob always has legitimate moves in the $\Hc$-absolute game played on $Y$ if $Y$ is $\Hc$-diffuse, allowing us to ignore condition~\ref{H5}:

\begin{Rem} \lab{Rem:radius 0}
	By Lemma~\ref{Lem:diffuse}, to show that $W$ is $\Hc$-absolute winning on an $\Hc$-diffuse set $Y$, it suffices to assume that $r(B_n) \to 0$.  So by rearranging the indices, it suffices to assume that $\rho_{c(B_1)} = r(B_1)$.  So if $B_1 = (x, \rho) \in Y\!\times\R_{>0}$ be Bob's arbitrary first move, then $r(B_{N}) < \rho_x$ for all $N$ sufficiently large.  By rearranging the indices, we can assume that the $\rho_{c(B_1)} = \rho_x = r(B_1)$.
\end{Rem}

We now list various properties of   $\Hc$-absolute winning sets. To make the exposition more streamlined, their proofs are postponed until~\hyperref[Sec:Appx B]{Appendix B}.

The next lemma shows that a set $\Hc$-absolute winning on an $\Hc$-diffuse set will be absolute winning for all reasonable $\beta$'s:

\begin{Lem} \lab{Lem:Extend beta}
	If $W$ is $\Hc$-absolute winning on $Y$ and $Y$ is $(\Hc, \beta)$-diffuse, then $W$ is $(\Hc, \beta')$-absolute winning on $Y$ for all $0 < \beta' \leq \frac{\beta}{2 + \beta} $.
\end{Lem}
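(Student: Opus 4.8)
The plan is to prove the lemma by a strategy-stealing (monotonicity) argument combined with a non-degeneracy check supplied by diffuseness. The guiding observation is that in the $\Hc$-absolute game, decreasing the parameter shrinks Alice's allowed deletions but relaxes Bob's lower bound on radii; consequently a winning strategy for a \emph{smaller} parameter can be reused move for move in a game with a \emph{larger} parameter. By the definition of $\Hc$-absolute winning on $Y$, there is $\beta_0 > 0$ such that $W$ is $(\Hc, \gamma)$-absolute winning on $Y$ for every $0 < \gamma < \beta_0$.

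First I would fix $0 < \beta' \le \frac{\beta}{2+\beta}$ and choose an auxiliary parameter $\beta_1$ with $0 < \beta_1 \le \min\{\beta', \beta_0/2\}$, so that simultaneously $\beta_1 < \beta_0$ and $\beta_1 \le \beta'$. Let $\sigma$ be a winning strategy for Alice in the $(\Hc, \beta_1)$-absolute game on $Y$, and let Alice play $\sigma$ verbatim in the $(\Hc, \beta')$-game. The key bookkeeping step is to verify that every history produced this way is legal for both games at once: Alice's move $(\Lc_n, \rho_n)$ dictated by $\sigma$ satisfies $\rho_n \le \beta_1 r_n \le \beta' r_n$, so it obeys rule~\ref{H2} for the $\beta'$-game; and any Bob move legal for the $\beta'$-game, satisfying $\beta' r_n \le r_{n+1} \le r_n$, automatically satisfies $\beta_1 r_n \le r_{n+1} \le r_n$ because $\beta_1 \le \beta'$, hence is legal for the $\beta_1$-game (conditions~\ref{H3} and~\ref{H4} are identical in the two games). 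Thus every infinite play of the $\beta'$-game consistent with $\sigma$ is also an infinite play of the $\beta_1$-game consistent with $\sigma$, so the winning property of $\sigma$ forces the unique intersection point into $W \cap Y$.

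The main obstacle — and the only place where the hypotheses $\beta' \le \frac{\beta}{2+\beta}$ and the $(\Hc,\beta)$-diffuseness of $Y$ are used — is ensuring that Bob never gets stuck under rule~\ref{H5} in the $\beta'$-game. This is \emph{not} inherited from the $\beta_1$-game, whose legal Bob moves form a strictly larger set. To handle it I would first invoke Remark~\ref{Rem:radius 0} to normalize Bob's opening move so that $r(B_1) = \rho_{c_1}$, where $c_1 = c(B_1)$; then every subsequent ball satisfies $(c_n, r_n) \preceq (c_1, \rho_{c_1})$, so Lemma~\ref{Lem:diffuse} applies with $x = c_1$ at every stage. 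Given Alice's deletion $(\Lc_n, \rho_n)$, Lemma~\ref{Lem:diffuse} (applied to $(y,\rho) = (c_n, r_n)$ and $\Lc = \Lc_n$, using $\beta' \le \frac{\beta}{2+\beta}$) produces $z \in Y \cap \B(c_n, r_n)$ with $(z, \beta' r_n) \preceq (c_n, r_n)$ and $\dist{z, \Lc_n} > 2\beta' r_n$.

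Taking $c_{n+1} = z$ and $r_{n+1} = \beta' r_n$ then yields a legal Bob move: \ref{H3} holds since $r_{n+1} = \beta' r_n$ satisfies $\beta' r_n \le r_{n+1} \le r_n$ and $(c_{n+1}, r_{n+1}) \preceq (c_n, r_n)$, while \ref{H4} holds because $\dist{c_{n+1}, \Lc_n} > 2\beta' r_n = r_{n+1} + \beta' r_n \ge r_{n+1} + \rho_n$, using $\rho_n \le \beta' r_n$. Hence Bob always has at least one legitimate move and the game lasts forever; combined with the previous paragraph this shows that $W$ is $(\Hc, \beta')$-absolute winning on $Y$ for every $0 < \beta' \le \frac{\beta}{2+\beta}$, as required.
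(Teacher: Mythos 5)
Your proposal is correct and follows essentially the same route as the paper's proof: reuse a positional winning strategy for a smaller parameter $\beta'' \leq \beta'$ verbatim in the $(\Hc, \beta')$-game, and invoke the $(\Hc,\beta)$-diffuseness of $Y$ via Lemma~\ref{Lem:diffuse} to guarantee Bob always has a legal move, so the game never terminates under rule~\ref{H5}. Your write-up is in fact more detailed than the paper's (which compresses the legality bookkeeping and the explicit construction of Bob's move $z$ with $\dist{z, \Lc_n} > 2\beta' r_n$ into a one-line appeal to Lemma~\ref{Lem:diffuse}), but the underlying argument is identical.
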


\begin{proof}
	See~\S\ref{Appx:Proof of Extend beta}.
\end{proof}

Using the above lemma, we can derive that $\Hc$-absolute winning on $\Hc$-diffuse set implies winning in Schmidt's sense:

\begin{Prop} \lab{Prop:winning} 	
	If $W$ is $\Hc$-absolute winning on $Y$, and $Y$ is $(\Hc, \beta)$-diffuse, then $W \cap Y$ is $\frac{\beta}{2 + \beta}$-winning when we play Schmidt game on $Y$ equipped with the induced metric.
\end{Prop}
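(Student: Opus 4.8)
The plan is to turn Alice's winning strategy $\sigma$ in the $\Hc$-absolute game on $Y$ into a winning strategy for the Schmidt $\prt{\alpha_0, \beta''}$-game on $Y$ with target $W \cap Y$, where $\alpha_0 = \frac{\beta}{2 + \beta}$ and $\beta'' \in (0,1)$ is arbitrary; as $\beta''$ ranges over $(0,1)$ this is precisely the assertion that $W \cap Y$ is $\alpha_0$-winning. First I fix constants. By Lemma~\ref{Lem:Extend beta}, $W$ is $\prt{\Hc, \gamma}$-absolute winning on $Y$ for every $0 < \gamma \le \alpha_0$; for the given $\beta''$ I take $\gamma = \beta'' \alpha_0$ and let $\sigma$ be a winning strategy for Alice in the $\prt{\Hc, \gamma}$-absolute game.

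The core of the proof is a round-by-round simulation that feeds Bob's Schmidt balls to $\sigma$ as Bob's absolute balls. When, in the Schmidt game, Bob plays $B_n^S = \prt{y_n, \rho_n}$, I regard it as Bob's $n$-th absolute move and let $\sigma$ respond with a pair $\prt{\Lc_n, \rho_n'}$, where $\Lc_n \in \Hc$ and $\rho_n' \le \gamma \rho_n$. Alice's Schmidt reply is manufactured from Lemma~\ref{Lem:diffuse} applied with $\beta' = \alpha_0$ and $\Lc = \Lc_n$: this produces $z_n \in Y \cap \B(y_n, \rho_n)$ with $\prt{z_n, \alpha_0 \rho_n} \preceq \prt{y_n, \rho_n}$ and $\dist{z_n, \Lc_n} > 2\alpha_0 \rho_n$, and Alice plays $A_n^S = \prt{z_n, \alpha_0 \rho_n}$, which obeys rule~\ref{S2}. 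Bob then plays $B_{n+1}^S = \prt{y_{n+1}, \rho_{n+1}} \preceq A_n^S$ with $\rho_{n+1} = \beta'' \alpha_0 \rho_n$, as prescribed by~\ref{S3}.

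It remains to check that the sequence $B_1^S, B_2^S, \dots$ is a legal play of the $\prt{\Hc, \gamma}$-absolute game against $\sigma$, for then $\sigma$ being winning yields $W \cap Y \cap \bigcap_n \B\prt{B_n^S} \neq \varnothing$; since the Schmidt balls are nested with radii tending to $0$, this intersection is the unique outcome point of the Schmidt game, which therefore lies in $W \cap Y$, as desired. Two conditions must be verified. The radius rule~\ref{H3} amounts to $\gamma \rho_n \le \rho_{n+1} \le \rho_n$, and with $\gamma = \beta'' \alpha_0$ and $\rho_{n+1} = \beta'' \alpha_0 \rho_n$ this holds with equality on the left; this is the one delicate piece of bookkeeping, where the fast geometric decay of the Schmidt balls is reconciled with the absolute game's lower bound on radii, and it is what dictates the choice $\gamma = \beta'' \alpha_0$. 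The separation rule~\ref{H4} follows from the triangle inequality: since $B_{n+1}^S \preceq A_n^S$ gives $\dist{z_n, y_{n+1}} \le \alpha_0 \rho_n - \rho_{n+1}$, we obtain $\dist{y_{n+1}, \Lc_n} \ge \dist{z_n, \Lc_n} - \dist{z_n, y_{n+1}} > 2\alpha_0 \rho_n - \prt{\alpha_0 \rho_n - \rho_{n+1}} = \alpha_0 \rho_n + \rho_{n+1} \ge \rho_n' + \rho_{n+1}$, the last step using $\rho_n' \le \gamma \rho_n \le \alpha_0 \rho_n$. The nesting $B_{n+1}^S \preceq B_n^S$ needed for~\ref{H3} is immediate from $B_{n+1}^S \preceq A_n^S \preceq B_n^S$.

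The step I expect to be the main obstacle is guaranteeing that Lemma~\ref{Lem:diffuse} may be invoked at every round: it requires $\prt{y_n, \rho_n} \preceq \prt{x, \rho_x}$ for some reference $x \in Y$, whereas our (weak) notion of $\Hc$-diffuseness supplies no uniform lower bound on $\rho_x$. I would dispose of this by a reduction in the spirit of Remark~\ref{Rem:radius 0}: the Schmidt radii decay geometrically, so I let Alice make harmless concentric moves until Bob first plays a ball $B_{n_0}^S = \prt{y_{n_0}, \rho_{n_0}}$ with $\rho_{n_0} \le \rho_{y_{n_0}}$, and then run the simulation from round $n_0$ with the single fixed reference $x = y_{n_0}$. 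Every subsequent Schmidt ball is nested in $\prt{y_{n_0}, \rho_{n_0}} \preceq \prt{y_{n_0}, \rho_{y_{n_0}}}$, so by transitivity of $\preceq$ the hypothesis of Lemma~\ref{Lem:diffuse} holds throughout, independently of how Bob's centers drift. Verifying that this start-up phase terminates is the point requiring the most care, and is where completeness of $Y$ together with the behaviour of the constants $\rho_x$ near the limit point of the play must be used.
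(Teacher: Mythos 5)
Your argument is correct and is essentially the paper's own proof (see \S\ref{Appx:Proof of winning}): the same reduction via Lemma~\ref{Lem:Extend beta} to a strategy for the parameter $\gamma=\beta''\alpha_0$, the same use of Lemma~\ref{Lem:diffuse} to place Alice's Schmidt ball of radius $\alpha_0 r(B_n)$ at distance greater than $2\alpha_0 r(B_n)$ from $\Lc_n$, and the same triangle-inequality check that Bob's Schmidt balls then form a legal play satisfying \ref{H3}--\ref{H4} in the absolute game, forcing the intersection point into $W\cap Y$. The start-up issue you flag is exactly what the paper disposes of via Remark~\ref{Rem:radius 0} (together with the positional strategies of Remark~\ref{Rem:positional strategy}), and your treatment of the non-uniformity of the constants $\rho_x$ is, if anything, more careful than the paper's.
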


\begin{proof}
	See~\S\ref{Appx:Proof of winning}.
\end{proof}

Following Schmidt's idea, we will show that $\Hc$-absolute winning sets possess countable intersection property:

\begin{Prop} \lab{Prop:Countable Intersection}
	Let $Y$ be an $(\Hc, \beta)$-diffuse set. Then a countable intersection of sets $\Hc$-absolute winning on $Y$ is also $\Hc$-absolute winning on $Y$.
\end{Prop}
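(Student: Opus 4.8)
The plan is to adapt Schmidt's classical interleaving argument to the $\Hc$-absolute game, using the diffuseness of $Y$ to keep the game alive. Write the target as $\bigcap_{i=1}^\infty W_i$, where each $W_i$ is $\Hc$-absolute winning on $Y$, and play a single combined $(\Hc,\beta_*)$-absolute game on $Y$ for a fixed $\beta_* \le \frac{\beta}{2+\beta}$. Choosing $\beta_*$ this small serves two purposes: by Lemma~\ref{Lem:diffuse} Bob never gets stuck, so condition~\ref{H5} may be ignored; and by Lemma~\ref{Lem:Extend beta} each $W_i$ is $(\Hc,\gamma)$-absolute winning on $Y$ for \emph{every} $0 < \gamma \le \frac{\beta}{2+\beta}$, uniformly in $i$. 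In particular, for the geometrically decaying sequence $\gamma_i := \beta_*^{2^i} \le \beta_* \le \frac{\beta}{2+\beta}$, Alice has a winning strategy $\sigma_i$ for $W_i$ in the $(\Hc,\gamma_i)$-absolute game on $Y$; the combined strategy will run all the $\sigma_i$ at once.

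The heart of the matter is the schedule. At combined round $n$, Alice consults the strategy $\sigma_{i}$ with index $i = i(n) := v_2(n)+1$ given by the $2$-adic valuation, feeding Bob's current ball $B_n$ into $\sigma_i$ as the next move of an auxiliary ``$\sigma_i$-subgame'' and playing whatever deletion $(\Lc_n,\rho_n)$ the strategy returns; since $\rho_n \le \gamma_i r_n \le \beta_* r_n$, this is legal in the combined game. With this schedule the rounds devoted to target $i$ are exactly the odd multiples of $2^{i-1}$, so consecutive $\sigma_i$-rounds $m < m'$ satisfy $m' - m = 2^i$. I then check that the subsequence $\prt{B_m}$ presented to $\sigma_i$ is a legitimate play of the $(\Hc,\gamma_i)$-absolute game on $Y$: the deletions Alice makes for other targets in between only further restrict Bob and are invisible to $\sigma_i$ except through the ball it next sees, so containment $\B(B_{m'}) \subseteq \B(B_m) \smallsetminus \Lc_m^{(\rho_m)}$ is immediate from nesting together with~\ref{H3}--\ref{H4}. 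The radius condition is exactly where the choice of $\gamma_i$ enters: over the $2^i$ intervening rounds the combined rule~\ref{H3} forces $r_{m'} \ge \beta_*^{2^i} r_m = \gamma_i r_m$, which is precisely the lower bound $\sigma_i$ demands, while $r_{m'} \le r_m$ is automatic.

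Finally, since each $\sigma_i$-subgame has infinitely many rounds and its radii shrink by the factor $\gamma_i < 1$ at each of them, the combined radii tend to $0$ (cf. Remark~\ref{Rem:radius 0}), so $\bigcap_n \B(B_n)$ is a single point $x_\infty$. For every fixed $i$ this same point is the nested intersection of the $\sigma_i$-subgame, whence $x_\infty \in W_i \cap Y$ by the winning property of $\sigma_i$; as $i$ is arbitrary, $x_\infty \in \prt{\bigcap_i W_i} \cap Y$. Because the construction works for every $\beta_* \le \frac{\beta}{2+\beta}$, the intersection is $\Hc$-absolute winning on $Y$. I expect the main obstacle to be exactly the tension exposed above: a single deletion per round lets Alice advance only one subgame at a time, so the gap between successive $\sigma_i$-moves is unavoidably unbounded, and a naive round-robin with a fixed parameter fails. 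The resolution is the non-uniform $2$-adic schedule, whose gaps $2^i$ are summable (so the schedule can accommodate all targets), matched to the decaying parameters $\gamma_i = \beta_*^{2^i}$ so that~\ref{H3} always certifies the radius bound required by $\sigma_i$.
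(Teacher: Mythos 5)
Your proof is correct and is essentially the paper's own argument: the identical dyadic interleaving (strategy $\sigma_i$ consulted exactly at the odd multiples of $2^{i-1}$, so gaps of $2^i$ matched to the parameters $\gamma^{2^i}$ obtained from Lemma~\ref{Lem:Extend beta}), the same use of Lemma~\ref{Lem:diffuse} to guarantee Bob never gets stuck, and the same verification that the subsequence of balls fed to each $\sigma_i$ obeys \ref{H3}--\ref{H4}. One cosmetic remark: the radii need not literally shrink by the factor $\gamma_i$ at each subgame round (rule~\ref{H3} only bounds them below), so the convergence $r(B_n) \to 0$ rests on the reduction in Remark~\ref{Rem:radius 0}, which is exactly how the paper handles it as well.
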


\begin{proof}
	See~\S\ref{Appx:Proof of Countable Intersection}.
\end{proof}

An easy consequence of the countable intersection property 
is that the $\Hc$-absolute winning property remains if we discard a countable number of removable points from the target set:

\begin{Prop} \lab{Prop:Remove Countable}
	Let $Y$ be an $\Hc$-diffuse set, $W$ be an $\Hc$-absolute winning on $Y$, and $Z \subseteq W \cap \overline{\bigcup_{\Lc \in \Hc} \prt{\Lc \cap Y}}$ is countable.  Then $W \smallsetminus Z$ is $\Hc$-absolute winning on $Y$. 
\end{Prop}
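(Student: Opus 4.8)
The plan is to deduce the statement from the countable intersection property (Proposition~\ref{Prop:Countable Intersection}) after reducing to the removal of a single point. Write $Z = \cbrk{z_1, z_2, \dots}$ (the argument is the same if $Z$ is finite) and note that
\[
W \smallsetminus Z = W \cap \bigcap_{i} \prt{X \smallsetminus \cbrk{z_i}}.
\]
Since $W$ is $\Hc$-absolute winning on $Y$ by hypothesis and $Y$ is $\Hc$-diffuse, Proposition~\ref{Prop:Countable Intersection} will reduce the whole statement to the following claim: for each fixed $z \in \overline{\bigcup_{\Lc \in \Hc}\prt{\Lc \cap Y}}$, the set $X \smallsetminus \cbrk{z}$ is $\Hc$-absolute winning on $Y$. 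This is where the hypothesis $Z \subseteq W \cap \overline{\bigcup_{\Lc \in \Hc}\prt{\Lc \cap Y}}$ enters, through the membership of each $z_i$ in the closure.

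To prove the claim I would fix such a $z$ and a parameter $0 < \beta < \beta_0$, where $\beta_0$ is chosen so that $Y$ is $(\Hc, \beta)$-diffuse. By Remark~\ref{Rem:radius 0} we may assume $r(B_n) \to 0$, so that $\bigcap_n \B(B_n) = \cbrk{x_\infty}$ is a single point; moreover $x_\infty \in Y$ since the centers $c_n \in Y$ converge to $x_\infty$ and $Y$ is closed. Thus for the target $X \smallsetminus \cbrk{z}$ Alice wins exactly when she can force $x_\infty \neq z$. Because the balls $\B(B_n)$ are nested, either $z \notin \B(B_1)$ — in which case $z$ lies in no $\B(B_n)$, so $x_\infty \neq z$ automatically and Alice may play arbitrary legal moves — or $z \in \B(B_1)$, and then Alice applies a single \emph{killing move} at the first stage.

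For the killing move at stage $1$, the containment $z \in \overline{\bigcup_{\Lc \in \Hc}\prt{\Lc \cap Y}}$ supplies some $\Lc \in \Hc$ with $\dist{z, \Lc} < \beta r_1$. Alice picks $\rho_1$ with $\dist{z, \Lc} < \rho_1 \leq \beta r_1$ and plays $A_1 = (\Lc, \rho_1)$, a legal move by rule~\ref{H2}. Then $z \in \Lc^{(\rho_1)}$, whereas conditions~\ref{H3} and~\ref{H4} force $\B(B_2) \subseteq \B(B_1) \smallsetminus \Lc^{(\rho_1)}$; hence $z \notin \B(B_2)$ and, a fortiori, $x_\infty \neq z$. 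Throughout, the $(\Hc,\beta)$-diffuseness of $Y$ guarantees via Lemma~\ref{Lem:diffuse} that Bob always has a legal response, so condition~\ref{H5} never triggers and the play is genuinely infinite. This shows $X \smallsetminus \cbrk{z}$ is $(\Hc, \beta)$-absolute winning on $Y$ for every $\beta < \beta_0$, hence $\Hc$-absolute winning on $Y$, which completes the reduction and the proof.

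The only real content beyond bookkeeping is recognizing that the closure hypothesis on $Z$ is precisely the property Alice needs: it is exactly what lets her locate members of $\Hc$ passing arbitrarily close to $z$, so that a single deletion excises $z$ from all subsequent balls. The point to be careful about is that this must be available at whatever (possibly large) radius $r_1$ governs Bob's opening ball containing $z$; but since we only require an $\Hc$-set within distance $\beta r_1$ of $z$, and the closure condition supplies such sets at every positive scale, no quantitative control on the radii is needed, and the argument for removing one point is genuinely a one-move strategy.
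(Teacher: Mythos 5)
Your proof is correct, and it reaches the conclusion by a different decomposition than the paper's. The paper never isolates the single-point sets: it defines $W_n = W \smallsetminus \cbrk{z_1, \dots, z_n}$ and shows each $W_n$ is $\Hc$-absolute winning on $Y$ directly, by having Alice spend her first $n$ moves on kill moves $A_i = \prt{\Lc_i, \rho_i}$ with $z_i \in \Lc_i^{\prt{\rho_i}}$ (the same mechanism as yours, enabled by the same closure hypothesis, which supplies such $\Lc_i$ at every scale $\rho_i > 0$) and then switch to a positional winning strategy for $W$ from move $n+1$ on; Proposition~\ref{Prop:Countable Intersection} is then applied to $W \smallsetminus Z = \bigcap_{n} W_n$. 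Your factorization $W \smallsetminus Z = W \cap \bigcap_{i} \prt{X \smallsetminus \cbrk{z_i}}$ is more modular: each set $X \smallsetminus \cbrk{z_i}$ is shown winning by a genuinely one-move strategy that makes no reference to $W$, so you never need to justify grafting finitely many kill moves onto $W$'s strategy --- the step where the paper tacitly leans on Remark~\ref{Rem:positional strategy} to restart a positional strategy mid-game --- and all strategy-combination work is delegated once and for all to Proposition~\ref{Prop:Countable Intersection}. What the paper's version buys in exchange is that it handles the kill moves uniformly without the case split on whether $z \in \B(B_1)$ (a kill move for $z_i$ is legal and harmless even when $z_i$ lies outside Bob's current ball), though your case split costs nothing. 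One small quantitative quibble: your phrase ``$\beta_0$ chosen so that $Y$ is $(\Hc, \beta)$-diffuse'' is off by the usual factor --- to guarantee via Lemma~\ref{Lem:diffuse} that Bob always has legal responses (so that~\ref{H5} never triggers) you should take $\beta_0 = \frac{\beta^*}{2 + \beta^*}$ where $Y$ is $(\Hc, \beta^*)$-diffuse; this is exactly the normalization used throughout the paper's Appendix~B and is a cosmetic fix, since $(\Hc,\beta^*)$-diffuseness implies $(\Hc,\beta)$-diffuseness for all smaller $\beta$.
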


\begin{proof}
	See~\S\ref{Appx:Proof of Remove Countable}.
\end{proof}

Probably the most importance property of diffuse sets is the inheritance property whose proof is verbatim to the proof of~\cite[Proposition 4.9]{BroderickFishmanKleinbockReichWeiss12} for the $k$-dimensional absolute game:

\begin{Prop}[{\cite[Proposition 4.9]{BroderickFishmanKleinbockReichWeiss12}}] \lab{Prop:Inheritance}
	If $Y \subseteq Z$ are both $\Hc$-diffuse and $W$ is $\Hc$-absolute winning on $Z$, then $W$ is also $\Hc$-absolute winning on $Y$.
\end{Prop}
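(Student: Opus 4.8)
The plan is to let Alice win the $\Hc$-absolute game on $Y$ by \emph{shadowing} a strategy she already possesses for the game on $Z$. Since $W$ is $\Hc$-absolute winning on $Z$ and $Y$ is $\Hc$-diffuse, I first fix $\beta_0 > 0$ small enough that, for every $0 < \beta < \beta_0$, simultaneously $W$ is $(\Hc, \beta)$-absolute winning on $Z$ and $Y$ is $(\Hc, \beta)$-diffuse (the latter by the monotonicity of diffuseness in $\beta$). Fixing such a $\beta$ and a strategy $\sigma$ witnessing that $W$ is $(\Hc, \beta)$-absolute winning on $Z$, it suffices to manufacture from $\sigma$ a winning strategy for Alice in the $(\Hc, \beta)$-absolute game on $Y$.

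The key observation is that every legal play of the game on $Y$ is also a legal play of the game on $Z$: Bob's moves $B_n = (c_n, r_n) \in Y \times \R_{>0}$ have centers $c_n \in Y \subseteq Z$, and rules \ref{H2}, \ref{H3}, \ref{H4} governing consecutive moves make no reference to the ambient set. Thus Alice can respond, in the game on $Y$, to Bob's moves $B_1, \dots, B_n$ by playing exactly the move $A_n = (\Lc_n, \rho_n)$ that $\sigma$ prescribes for the identical sequence of Bob-moves in the game on $Z$. Because $\sigma$ outputs legal moves on $Z$ and rule \ref{H2} (namely $0 < \rho_n \le \beta r_n$) is identical in both games, each such $A_n$ is a legal Alice-move on $Y$.

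Now I would verify that this shadowing yields an infinite play with outcome in $W \cap Y$. Since $Y$ is $(\Hc, \beta)$-diffuse, Lemma~\ref{Lem:diffuse} ensures that Bob never runs out of admissible moves in the game on $Y$, so rule \ref{H5} is never triggered; and by Remark~\ref{Rem:radius 0} it suffices to consider plays with $r(B_n) \to 0$. Such a play $B_1, A_1, B_2, A_2, \dots$ is a complete play of the game on $Z$ in which Alice followed the winning strategy $\sigma$, so
\eq{}{W \cap Z \cap \bigcap_{n = 1}^\infty \B(B_n) \neq \varnothing.}
Because $r(B_n) \to 0$, the nested balls $\B(B_n)$ intersect in the single point $x_\infty = \lim_n c_n$; by the displayed nonemptiness $x_\infty \in W \cap Z$, and since each $c_n \in Y$ with $Y$ closed, also $x_\infty \in Y$. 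Hence $x_\infty \in W \cap Y \cap \bigcap_n \B(B_n)$, which is precisely the winning condition on $Y$. This shows $W$ is $(\Hc, \beta)$-absolute winning on $Y$ for every $0 < \beta < \beta_0$, i.e.\ $\Hc$-absolute winning on $Y$.

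The argument is short, and its only delicate points are the two passages between the games. The first is that a play legal on $Y$ stays legal on $Z$, so that $\sigma$ may be transported verbatim; this is immediate from $Y \subseteq Z$ together with the ambient-independence of rules \ref{H2}--\ref{H4}. The second, which I expect to be the main thing to get right, is that the point delivered by $\sigma$ in $W \cap Z$ actually lies in $Y$: this relies on the closedness of $Y$ and on the reduction to vanishing radii from Remark~\ref{Rem:radius 0}, with the $\Hc$-diffuseness of $Y$ doing the auxiliary work of keeping the game alive.
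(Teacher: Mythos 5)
Your shadowing argument is essentially the paper's own proof: the paper establishes this proposition by citing \cite[Proposition 4.9]{BroderickFishmanKleinbockReichWeiss12}, whose argument is verbatim what you give --- every legal play on $Y$ is a legal play on $Z$, so Alice transports her strategy unchanged, $\Hc$-diffuseness of $Y$ (via Lemma~\ref{Lem:diffuse} and Remark~\ref{Rem:radius 0}) keeps Bob supplied with moves, and closedness of $Y$ places the limit point in $W \cap Y$. One small quantitative nit: Lemma~\ref{Lem:diffuse} supplies Bob's moves when the game parameter $\beta$ is at most $\beta_1/(2+\beta_1)$ for a diffuseness exponent $\beta_1$ of $Y$, not merely when $Y$ is $(\Hc,\beta)$-diffuse for the same $\beta$, but your freedom to shrink $\beta_0$ (exactly as in Lemma~\ref{Lem:Extend beta}) absorbs this.
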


In the proof of Theorem~\ref{Thm:Main}, we will use a version of the $\Hc$-absolute game, in which Alice is allowed to choose $N$ closed subsets $\Lc_1, ..., \Lc_N$ in $\Hc$ in each move:
\eq{}{A_n = \prt{\bigcup_{i = 1}^N \Lc_i, \rho}}
for a fixed $N \geq 1$.  Let
\eq{}{\Hc^{*N} = \cbrk{\bigcup_{i = 1}^N \Lc_i : \Lc_i \in \Hc \text{ for } 1 \leq i \leq N}.}
We will show that this change in the rule won't affect the class of $\Hc$-absolute winning sets:

\begin{Prop} \lab{Prop:NHWinning}
	Assume that $Y$ is $\Hc$-diffuse, and let $W \subseteq X$.  Then the followings are equivalent:
	\begin{itemize}
		\item[\rm (i)]	$W$ is $\Hc$-absolute winning on $Y$.
		\item[\rm (ii)]	$W$ is $\Hc^{*N}$-absolute winning on $Y$.
	\end{itemize}
\end{Prop}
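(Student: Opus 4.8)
The plan is to prove the two implications separately; the implication (i)$\,\Rightarrow\,$(ii) is essentially formal, and all the content sits in (ii)$\,\Rightarrow\,$(i). For (i)$\,\Rightarrow\,$(ii), observe that $\Hc \subseteq \Hc^{*N}$, since any $\Lc \in \Hc$ is the union of $N$ copies of itself. Hence every legal Alice move in the $\Hc$-absolute game is already a legal Alice move in the $\Hc^{*N}$-absolute game with the same parameter $\beta$ and the same radius, while Bob's rules \ref{H3}, \ref{H4} and the winning condition depend only on the set Alice deletes and on its radius. I would therefore have Alice reuse her $\Hc$-winning strategy verbatim; since it only ever deletes single sets, $\Hc$-diffuseness of $Y$ still keeps Bob alive, and $W$ is $(\Hc^{*N},\beta)$-absolute winning on $Y$ for the same range of $\beta$.

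For (ii)$\,\Rightarrow\,$(i), assume $W$ is $(\Hc^{*N},\beta^*)$-absolute winning on $Y$ for all $\beta^* < \beta_0^*$, fix a small $\beta$, and set $\beta^* = \beta^N$. The idea is to let Alice run a \emph{virtual} $(\Hc^{*N},\beta^*)$-game in her head, driven by the real $(\Hc,\beta)$-game, grouping the real rounds into blocks of length $N$. I would declare the $k$-th virtual Bob ball to be $\tilde B_k := B_{(k-1)N+1}$. Alice's virtual $\Hc^{*N}$-strategy then prescribes a set $\bigcup_{i=1}^N \Lc_i^{(k)} \in \Hc^{*N}$ together with a radius $\tilde\rho_k \leq \beta^*\tilde r_k$; in the real game Alice deletes the single set $\Lc_j^{(k)}$ at round $(k-1)N+j$, each time with radius $\tilde\rho_k$, and after the block she reads off $\tilde B_{k+1} := B_{kN+1}$ from the real Bob's move.

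Three verifications remain. First, legality of Alice's real moves: because radii drop by at most a factor $\beta$ per round, $r_{(k-1)N+j} \geq \beta^{\,j-1}\tilde r_k$, so $\tilde\rho_k \leq \beta^N \tilde r_k \leq \beta\, r_{(k-1)N+j}$, which is exactly rule \ref{H2}; this is precisely the reason for the choice $\beta^* = \beta^N$. Second, legality of the virtual Bob ball $\tilde B_{k+1}$: nestedness $\tilde B_{k+1}\preceq\tilde B_k$ and the bounds $\beta^*\tilde r_k \leq \tilde r_{k+1} \leq \tilde r_k$ follow by applying \ref{H3} across the $N$ rounds of the block, while the separation condition \ref{H4}, namely $\dist{\tilde c_{k+1}, \bigcup_j \Lc_j^{(k)}} > \tilde\rho_k + \tilde r_{k+1}$, is deduced for each $j$ from the real rule \ref{H4} at round $(k-1)N+j$ combined with $\dist{c_{kN+1}, c_{(k-1)N+j+1}} \leq r_{(k-1)N+j+1} - r_{kN+1}$ (which is $B_{kN+1}\preceq B_{(k-1)N+j+1}$) through the triangle inequality. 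Third, that the play never terminates: $\Hc$-diffuseness of $Y$ and Lemma~\ref{Lem:diffuse} keep the real Bob alive once $\beta$ is below the diffuseness threshold, so the virtual game is infinite too. Granting these, Alice's $(\Hc^{*N},\beta^*)$-winning strategy forces $W \cap Y \cap \bigcap_k \B(\tilde B_k) \neq \varnothing$; and since $\{\tilde B_k\}$ is a cofinal subsequence of the nested balls $\{B_n\}$, we have $\bigcap_k \B(\tilde B_k) = \bigcap_n \B(B_n)$, so Alice wins the real game. Taking $\beta_0 = \min\{(\beta_0^*)^{1/N},\ \text{the diffuseness threshold}\}$ then yields (i). I expect the only genuinely delicate point to be the second verification: arranging that $N$ deletions carried out over $N$ separate rounds, with Bob free to shrink and recenter in between, still add up at the end of the block to the single separation inequality demanded of one $\Hc^{*N}$ move — which is exactly what the power $\beta^* = \beta^N$ and the $\preceq$-distance estimate are designed to secure.
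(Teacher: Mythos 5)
Your proof is correct and follows essentially the same route as the paper's: both directions match, and for (ii)$\,\Rightarrow\,$(i) the paper likewise blocks the real game into groups of $N$ rounds, feeds $B_{(k-1)N+1}$ to a positional $\prt{\Hc^{*N},\gamma^N}$-strategy (its $\gamma=\frac{\beta}{2+\beta}$ playing the role of your $\beta$ below the diffuseness threshold), has Alice delete the $N$ components one per round with the prescribed radius, keeps Bob alive via Lemma~\ref{Lem:diffuse}, and concludes by viewing the cofinal subsequence as a legal $\Hc^{*N}$-play. Your explicit verifications of \ref{H2} via $r_{(k-1)N+j}\geq \beta^{j-1}\tilde r_k$ and of \ref{H4} via the $\preceq$-triangle-inequality estimate are exactly the details the paper leaves to the reader.
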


\begin{proof}
	See~\S\ref{Appx:Proof of NHWinning}.
\end{proof}

\begin{Rem} \lab{Rem:positional strategy}
	Technically speaking, a strategy for Alice in the game described above will have to take all the previous moves in consideration.  Nevertheless, it follows from~\cite[Theorem 7]{Schmidt66} that for a general topological infinite game of two players, including both games described in~\S\ref{Subsec:Schmidt game} and~\S\ref{Subsec:H-absolute game}, if Alice is winning then there exists a winning strategy for Alice that only takes Bob's immediately preceding move into account.  Such strategy is called a \emph{positional winning strategy}, and for our interest in the case of the $\Hc$-absolute game, it can be defined as a function $\sigma: Y \times \R_{>0} \to \Hc \times \R_{>0}$ satisfying: 
	\begin{itemize}
		\item[(i)]	For any $B_n \in Y \times \R_{>0}$, $A_n = \sigma(B_n)$ satisfies~\ref{H2},
		\item[(ii)]	For any sequence $B_1, A_1 = \sigma(B_1), B_2, A_2 = \sigma(B_2), ...$ satisfying \ref{H1}--\ref{H4}, Bob always has available choices at every stage of the game, and the intersection:
		\eq{}{W \cap Y \bigcap_{n = 1}^\infty \B(B_n) \neq \emptyset.}
	\end{itemize}
\end{Rem}

We end this section by proving Corollaries~\ref{Cor:ESK} and  \ref{Cor:EGL}  
assuming Theorem~\ref{Thm:Main}.

\begin{proof}[Proof of Corollary~\ref{Cor:EGL} and Theorem~\ref{Thm:EGL}]
	Recall that for every $t \in [0, 1]$ we have defined
	\eq{}{T(t) = \cbrk{v \in S: \phi'_v(t) \neq 0}.}
	Let
	\eq{}{D := \cbrk{t \in [0, 1]: \# T(t)_\R + 2 \prt{\# T(t)_\CO} \leq \frac{d}{2} }. }
	First, we will assume that $D = \varnothing$ and $\phi$ is injective.  Then by Proposition~\ref{Prop:H-diffuse curve}, $\phi([0, 1])$ is $\Hc_K$-diffuse.  So by Theorem~\ref{Thm:Main} and Proposition~\ref{Prop:Inheritance}, $\BA_K$ is $\Hc_K$-absolute winning on $\phi([0, 1])$.  

		Let $a, b, c$ be defined as in the proof of Proposition~\ref{Prop:H-diffuse curve}, and let $0 < \beta <\frac{1}{3}$ be sufficiently small such that $\phi([0, 1])$ is $\prt{\Hc_K, \frac{2\beta'}{1 - \beta'}}$-diffuse for $\beta' = \beta \cdot \min \cbrk{1, \frac{ac}{4b\sqrt{d}}}$.  In particular, $\BA_K$ is $\prt{\Hc_K, \beta'}$-absolute winning on $\phi([0, 1])$.  Let $\sigma: \phi([0, 1]) \times \R_{>0} \to \Hc_K \times \R_{>0}$ be a positional winning strategy (see Remark~\ref{Rem:positional strategy} above). 
	Consider the $\prt{\Hc, \beta}$-absolute game on $[0, 1]$ with $\Hc = \cbrk{\cbrk{t}: 0 \leq t \leq 1}$, and let $B_n = \prt{t_n, r_n} \in [0, 1]\times \R_{>0}$ be Bob's arbitrary move.  It suffices to assume that $r_n b\sqrt{d}  < \rho_{\phi(t_n)}$.

	For every $t_n \leq t \leq t_n + r_n$, the arc length of $\phi([t_n, t])$ is:
	\eq{}{l \prt{\phi([t_n, t])} \leq (t - t_n) b\sqrt{d} \leq r_n b\sqrt{d}.}
	That implies:
	\eq{}{\phi \prt{\B(B_n)} \subseteq \B \prt{\phi(t_n), r_n b\sqrt{d} }.}
	Let
	\eq{}{B'_n= \prt{\phi(t_n), r_n b\sqrt{d} } \text{ \quad and \quad} A'_n = \sigma(B'_n) = \prt{\Lc_n, \rho'_n}.}
	
	By the Mean Value Theorem, $\Lc_n$ intersects $\phi([0, 1])$ at at most 1 point.  If $$\phi^{-1} \prt{\Lc_n^{\prt{\rho'_n}} \cap \phi([0, 1])} \cap \B(B_n) = \varnothing$$ then Alice can make an arbitrary move $A_n$ in the $(\Hc, \beta)$-absolute game on $[0, 1]$.  If $\Lc_n \cap \B(B_n) \neq \varnothing$, then let $A_n = \prt{\phi^{-1} \prt{\Lc_n \cap \B(B_n)}, \beta r_n}$, otherwise, let $A_n = \prt{\phi^{-1} \prt{\x}, \beta r_n}$ for arbitrary $\x \in \Lc_n^{\prt{\rho'_n}} \cap \B(B'_n) \cap \phi([0, 1])$.  So for any $\y \in \Lc_n^{\prt{\rho'_n}} \cap \B(B'_n) \cap \phi([0, 1])$,
	\eq{}{\abs{c(A_n) - \phi^{-1}(\y)} \leq \frac{2}{ac} \norm{\pi_{\Lc^\perp} \prt{\y - \phi(c(A_n))}} \leq \frac{2}{ac} 2\beta' r_n b \sqrt{d} \leq \beta r_n.}
	It shows that:
	\eq{}{\phi\prt{\B(A_n)} \supseteq \Lc_n^{\prt{\rho'_n}} \cap \B(B'_n) \cap \phi([0, 1]).}
	Moreover, since $\beta' \leq \beta$,
	\eq{}{\norm{\phi(t_{n + 1}) - \phi(t_n)} \leq (1 - \beta) r_n b \sqrt{d} \leq (1 - \beta') r_n b \sqrt{d}.}
	So the sequence $B'_n, \sigma(B'_n), B'_{n + 1}, \sigma \prt{B'_{n + 1}}, ...$ satisfies the conditions~\ref{H1}--\ref{H5} of the $\prt{\Hc_K, \beta'}$-absolute game on $\phi([0, 1])$.  Therefore,
	\eq{}{\phi \prt{\bigcap_{n = 1}^\infty \B(B_n)} \subseteq \bigcap_{n = 1}^\infty \B(B'_n) \cap \phi([0, 1]) \subseteq \BA_K \cap~\phi([0, 1]).}
	Hence, $\phi^{-1}\prt{\BA_K}$ is absolute winning on $[0, 1]$.

	In the second case when $D = \varnothing$ and $\phi$ is not injective, by the Constant Rank Theorem, we can cover the interval $[0, 1]$ so that $\phi$ is injective on each subinterval.  Applying the first case,  we have that $\phi([0, 1])$ is $\Hc_K$-diffuse and $\phi^{-1}(\BA_K)$ is absolute winning.
	
	Finally, when $D \neq \varnothing$, then by the previous case, for every $n \geq 1$: 
	\eq{}{W_n := \phi^{-1}(\BA_K) \cup \bigcup_{t \in D} \prt{\prt{t - \frac{1}{n}, t + \frac{1}{n}} \cap [0, 1]}}
	is absolute winning on $[0, 1]$.  
	
	Since
	\eq{}{D	= \cbrk{t \in [0, 1]: \# T(t)_\R + 2 \prt{\# T(t)_\CO} \leq \frac{d}{2} } = \bigcup_{\substack{T \subseteq S \\ \#T_\R + 2 \prt{\#T_\CO} > \frac{d}{2}}} \bigcap_{v \in T} \prt{\phi'_v}^{-1} \prt{0},}
	$D$ is closed, and hence,
	\eq{}{\phi^{-1} \prt{\BA_K} \cup D = \bigcap_{n = 1}^\infty W_n.} 
	Applying Proposition~\ref{Prop:Countable Intersection}, $\phi^{-1}\prt{\BA_K} \cup D$ is absolute winning on $[0, 1]$.  Thus, by Proposition~\ref{Prop:Remove Countable}, $\phi^{-1}\prt{\BA_K}$ is absolute winning on $[0, 1]$. 

\end{proof}

\begin{Rem}
	The corollary still holds with $[0,1]$ replaced by $\R$.
\end{Rem}

\begin{proof}[Proof of Corollary~\ref{Cor:ESK} and Theorem~\ref{Thm:ESK}]
	Let $K$ be an imaginary quadratic field and $Y$ be the support of an Ahlfors regular measure.  Consider McMullen's absolute game on $\CO$ where $\Hc = \cbrk{\cbrk{x}: x \in \CO}$.  Since $\BA_K$ is absolute winning by Corollary~\ref{Cor:AW} and $Y$ is $\Hc$-diffuse by Example~\ref{Eg:Ahlfors}, it follows from Proposition~\ref{Prop:Inheritance} that $\BA_K$ is absolute winning on $Y$.  Thus, $\BA_K \cap Y$ is winning for the Schmidt game playing on $Y$ by Proposition~\ref{Prop:winning}.
\end{proof}



\section{Proof of Theorem~\ref{Thm:Main}} \lab{Sec:Proof}
One of the key ingredients to show the winning property 
has traditionally been the Simplex Lemma, see \cite{BroderickFishmanKleinbockReichWeiss12} and references therein.  The next statement (Lemma~\ref{Lem:Unique1}) can be thought of as a number field analogue of the Simplex Lemma. Together with an estimate of the growth of the height function with respect to the flow $g_t \Lambda_\x$ as $\x$ varies in Lemma~\ref{Lem:Bounding H}, it implies Lemma~\ref{Lem:Unique2} which is a more refined version of the Simplex Lemma. 

\begin{Lem}[{\cite[Lemma 4.1]{EinsiedlerGhoshLytle13}}] \lab{Lem:Unique1}
	Let $u \in G$, and $\vect{z} = u \iv[S]{\vvect{p}{q}}, \vect{z}' = u \iv[S]{\vvect{p'}{q'}}$, where $p, p' \in \BO$, $q, q' \in \BO \smallsetminus \cbrk{0}$.  If $\height{\vect{z}} \height{\vect{z}'} < 2^{-d}$, then $\frac{p}{q} = \frac{p'}{q'}$. 
\end{Lem}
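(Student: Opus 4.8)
The plan is to exploit the multiplicative structure of the height function, which mirrors the field norm, together with the elementary fact that a nonzero algebraic integer has field norm of absolute value at least $1$. The single quantity tying everything together is the element $a := pq' - p'q \in \BO$, and the whole argument amounts to estimating $|N(a)|$ in two ways.

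First I would compute, at each place $v \in S$, the determinant of the $2\times 2$ matrix whose columns are $\vec{z}_v$ and $\vec{z}'_v$. Since $\vec{z}_v = u_v \iv[v]{\vvect{p}{q}}$ and $\vec{z}'_v = u_v \iv[v]{\vvect{p'}{q'}}$ with $u_v \in \SL_2(K_v)$, multiplicativity of the determinant and $\det u_v = 1$ give
\eq{}{\det\prt{\vec{z}_v, \vec{z}'_v} = \iv[v]{p}\iv[v]{q'} - \iv[v]{p'}\iv[v]{q} = \iv[v]{pq' - p'q} = \iv[v]{a}.}
This identifies $\abs{\iv[v]{a}}$ with the absolute value of a determinant at every place, which is the crucial link between the algebraic datum $a$ and the geometric data $\vec{z}_v, \vec{z}'_v$.

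Next I would bound each factor from above. Expanding the $2\times 2$ determinant and applying the triangle inequality, each entry of $\vec{z}_v$ is at most $\norm{\vec{z}_v}$ and each entry of $\vec{z}'_v$ is at most $\norm{\vec{z}'_v}$, so
\eq{}{\abs{\iv[v]{a}} = \abs{\det\prt{\vec{z}_v, \vec{z}'_v}} \leq 2 \norm{\vec{z}_v} \norm{\vec{z}'_v}.}
Raising to the power $d_v$ and multiplying over $v \in S$, the left-hand side becomes the field norm $\abs{N(a)} = \prod_{v \in S} \abs{\iv[v]{a}}^{d_v}$, while the right-hand side factors exactly as $2^{\sum_v d_v}\prod_v \norm{\vec{z}_v}^{d_v}\prod_v \norm{\vec{z}'_v}^{d_v} = 2^d \height{\vect{z}} \height{\vect{z}'}$, by the definition of the height. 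Hence $\abs{N(a)} \leq 2^d \height{\vect{z}} \height{\vect{z}'}$.

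Finally I would invoke the hypothesis $\height{\vect{z}} \height{\vect{z}'} < 2^{-d}$ to conclude $\abs{N(a)} < 1$. Since $a \in \BO$, its field norm $N(a)$ is a rational integer, so $\abs{N(a)} < 1$ forces $N(a) = 0$, hence $a = 0$, i.e.\ $pq' = p'q$, which is precisely $\frac{p}{q} = \frac{p'}{q'}$. The argument is short once the correct viewpoint is fixed; the only point deserving care — and the step I would flag — is the clean alignment between the product defining $\height{\cdot}$ and the product defining $\abs{N(\cdot)}$, which is exactly what makes the threshold come out to $2^{-d}$. Beyond this bookkeeping no genuine obstacle arises.
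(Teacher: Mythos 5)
Your proof is correct and follows essentially the same route as the paper's: both hinge on the identity $\det(\vec z_v,\vec z'_v)=\iv[v]{pq'-p'q}$ (via $\det u=1$), the triangle-inequality bound $\abs{\det}\le 2\norm{\vec z_v}\norm{\vec z'_v}$, and the integrality of $N(pq'-p'q)$ forcing it to vanish. The only cosmetic difference is that you work place-by-place and multiply at the end, whereas the paper phrases the same computation globally as $\height{\det(uP)}=\height{\det P}=\abs{N(pq'-p'q)}\le 2^d\height{\vect z}\height{\vect z'}$.
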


For completeness, we will provide a proof of Lemma~\ref{Lem:Unique1} here, which is the same as the proof in~\cite{EinsiedlerGhoshLytle13} with a minor correction on the constant.

\begin{proof}
	Let $P = \iv[S]{\begin{matrix} p &p' \\ q &q' \end{matrix}} \in M_{2, 2} (K_S)$, then the height of the determinant of $uP$ is:
	\begin{align*}
		\height{\det \prt{uP}}	&= \height{\det \prt{P}}	&(\text{since } u \in \SL_2(K_S)) \\
			&= \height{\iv[S]{p q' - p' q}} \\
			&= \abs{N(p q' - p' q)}	&(\text{since } p q' - p' q \in \BO)
	\end{align*}
	On the other hand,
	\begin{align*}
		\height{\det \prt{uP}}	&=  \height{\det \begin{pmatrix} \z_1 &\z'_1 \\ \z_2 &\z'_2 \end{pmatrix}} \\
			&= \height{\z_1 \z_2' - \z'_1 \z_2} \\
			&= \prod_{v \in S} \abs{(\z_1)_v (\z_2')_v - (\z'_1)_v (\z_2)_v}^{d_v} \\
			&\leq \prod_{v \in S} \prt{2 \max \cbrk{\abs{(\z_1)_v}, \abs{(\z_2)_v}} \cdot \max \cbrk{\abs{(\z'_1)_v}, \abs{(\z'_2)_v}}}^{d_v} \\
			&= 2^d \height{\vect{z}} \height{\vect{z}'} < 1.
	\end{align*}
	Thus, $|N(pq' - p'q)| = 0$ and $pq' = p'q$, since  {$\abs{N(pq' - p'q)} \in \N$}. 
\end{proof}

\begin{Lem} \lab{Lem:Bounding H}
	Let $\x, \y \in K_{S}$ such that $\norm{\x - \y} \leq \rho$.  Then for any $p \in \BO$, $q \in \BO \smallsetminus \cbrk{0}$ and for any $t \geq 0$,
	\eq{}{
		\prt{1 + e^{2t} \rho}^{-d} \height{g_t u_\x \iv[S]{\vvect{p}{q}}} \leq \height{g_t u_\y \iv[S]{\vvect{p}{q}}} \leq \prt{1 + e^{2t} \rho}^d \height{g_t u_\x \iv[S]{\vvect{p}{q}}}.
	}
\end{Lem}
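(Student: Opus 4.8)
The plan is to exploit the fact that the height function factors as a product over the places $v \in S$, so that the desired inequality reduces to a pointwise estimate at each place which is then reassembled. First I would compute the two vectors place by place. Writing $\vect{w} = \iv[S]{\vvect{p}{q}}$, so that $\vec{w}_v = \pvect{\iv[v]{p}}{\iv[v]{q}}$, a direct matrix multiplication gives at each $v \in S$
\[
\prt{g_t u_\x \vect{w}}_v = \pvect{e^t\prt{\iv[v]{p} + x_v \iv[v]{q}}}{e^{-t}\iv[v]{q}},
\]
and the analogous expression with $\y$ in place of $\x$. Setting $a_v = \abs{\iv[v]{p} + x_v\iv[v]{q}}$, $b_v = \abs{\iv[v]{p} + y_v\iv[v]{q}}$ and $c_v = \abs{\iv[v]{q}}$, the definition of the height then reads
\[
\height{g_t u_\x \vect{w}} = \prod_{v \in S} \max\cbrk{e^t a_v,\, e^{-t} c_v}^{d_v},
\]
and likewise for $\y$ with $b_v$ replacing $a_v$.

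The core of the argument is a single per-place inequality. Since the top entries differ by $\abs{(x_v - y_v)\iv[v]{q}} \le \rho\, c_v$, I would estimate $b_v \le a_v + \rho c_v$, hence $e^t b_v \le e^t a_v + \rho e^t c_v$, and then rewrite the last term as $\rho e^t c_v = \rho e^{2t}\cdot\prt{e^{-t} c_v}$. Writing $M_v := \max\cbrk{e^t a_v,\, e^{-t} c_v}$, both $e^t a_v \le M_v$ and $e^{-t} c_v \le M_v$, so $e^t b_v \le \prt{1 + e^{2t}\rho} M_v$; combined with the trivial bound $e^{-t} c_v \le M_v$ this yields
\[
\max\cbrk{e^t b_v,\, e^{-t} c_v} \le \prt{1 + e^{2t}\rho}\,\max\cbrk{e^t a_v,\, e^{-t} c_v}.
\]
This is precisely the step where the factor $e^{2t}$, rather than $e^t$, appears: the perturbation $\rho$ enters through the top coordinate, which is scaled by $e^t$, while the relevant comparison is against the bottom coordinate scaled by $e^{-t}$, and the discrepancy between the two scalings is exactly $e^{2t}$. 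I expect this bookkeeping to be the only subtle point, and the main thing to get right.

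Finally I would raise the per-place inequality to the power $d_v$ and take the product over $v \in S$. Using $\sum_{v \in S} d_v = d$, the constant collects to $\prt{1 + e^{2t}\rho}^d$, giving
\[
\height{g_t u_\y \vect{w}} \le \prt{1 + e^{2t}\rho}^d\,\height{g_t u_\x \vect{w}}.
\]
The lower bound follows at once by interchanging the roles of $\x$ and $\y$, which is legitimate because the hypothesis $\norm{\x - \y} \le \rho$ is symmetric, and then rearranging the resulting inequality.
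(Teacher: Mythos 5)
Your proof is correct and follows essentially the same route as the paper's: a per-place triangle inequality on the top coordinate, with the perturbation $\rho$ absorbed by comparing $e^t\rho = e^{2t}\rho\cdot e^{-t}$ against the $e^{-t}$-scaled bottom entry, then taking the product over $v \in S$ and invoking symmetry for the lower bound. The only cosmetic difference is that the paper first divides each factor by $\abs{\iv[v]{q}}$, whereas you keep $c_v = \abs{\iv[v]{q}}$ explicit; the underlying estimate is identical.
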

	
\begin{proof}
	\begin{align*}
		\frac{\height{g_t u_\y \iv[S]{\vvect{p}{q}}}}{\height{g_t u_\x \iv[S]{\vvect{p}{q}}}} &= \prod_{v \in S} \frac{ \max \cbrk{e^{-t} \abs{\iv[v]{q}}, e^{t} \abs{\iv[v]{q} y_v + \iv[v]{p}}}^{d_v} }{ \max \cbrk{e^{-t} \abs{\iv[v]{q}}, e^{t} \abs{\iv[v]{q} x_v + \iv[v]{p}}}^{d_v} } \\
			&= \prod_{v \in S} \frac{ \max \cbrk{ e^{-t}, e^{t} \abs{ y_v + \iv[v]{\dfrac{p}{q}} } }^{d_v} }{ \max \cbrk{ e^{-t}, e^{t} \abs{ x_v + \iv[v]{\dfrac{p}{q}} } }^{d_v} } \\
			&\leq \prod_{v \in S} \frac{ \max \cbrk{ e^{-t}, e^{t} \prt{ \abs{y_v - x_v} + \abs{ x_v + \iv[v]{\dfrac{p}{q}} }}}^{d_v} }{ \max \cbrk{ e^{-t}, e^{t} \abs{ x_v + \iv[v]{\dfrac{p}{q}} }}^{d_v} } \\
			&\leq \prod_{v \in S} \frac{ \max \cbrk{ e^{-t}, e^{t} \prt{ \rho + \abs{ x_v + \iv[v]{\dfrac{p}{q}} }}}^{d_v} }{ \max \cbrk{ e^{-t}, e^{t} \abs{ x_v + \iv[v]{\dfrac{p}{q}} }}^{d_v} } \\
			&\leq \prod_{v \in S} \prt{1 + e^{2t} \rho}^{d_v} \\
			&= \prt{1 + e^{2t}\rho}^d.
	\end{align*}
	
	The reverse inequality is obtained by symmetry.
\end{proof}

\begin{Lem}	\lab{Lem:Unique2}
	Let $\B = \B(\y, \rho)$.  Let $t > 0$ and $\varepsilon = 2^{-d} (1 + 2e^{2t}\rho)^{-d}$. If there exists $p \in \BO$, $q \in  \BO\smallsetminus \cbrk{0}$ and $\x \in \B$ such that
	\eq{}{\height{g_t u_{\x} \iv[S]{\vvect{p}{q}}} \leq \varepsilon,}
	then for every $p' \in \BO$, $q' \in \BO \smallsetminus \cbrk{0}$ with $\frac{p'}{q'} \neq \frac{p}{q}$, and for every $\x' \in \B$:
	\eq{}{\height{g_t u_{\x'} \iv[S]{\vvect{p'}{q'}}} \geq 1.}
\end{Lem}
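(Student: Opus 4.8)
The plan is to argue by contraposition: I will suppose that for some $\x' \in \B$ and some $p' \in \BO$, $q' \in \BO \smallsetminus \cbrk{0}$ with $\frac{p'}{q'} \neq \frac{p}{q}$ one has $\height{g_t u_{\x'} \iv[S]{\vvect{p'}{q'}}} < 1$, and derive a contradiction with Lemma~\ref{Lem:Unique1}. The only friction is that Lemma~\ref{Lem:Unique1} compares two lattice vectors sitting in the image of a \emph{single} element $u \in G$, whereas the two height bounds available are taken at the two different points $\x$ and $\x'$. So the first—and essentially the only—step is to move both estimates to a common base point, for which the natural choice is $\x$ itself, since the bound on $\height{g_t u_{\x}\iv[S]{\vvect{p}{q}}}$ is already stated there.

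Concretely, since $\x, \x' \in \B = \B(\y, \rho)$ the triangle inequality gives $\norm{\x - \x'} \leq 2\rho$, so applying Lemma~\ref{Lem:Bounding H} to the pair $\x, \x'$ (with $2\rho$ in place of $\rho$) yields
\eq{}{\height{g_t u_{\x} \iv[S]{\vvect{p'}{q'}}} \leq \prt{1 + 2e^{2t}\rho}^d \, \height{g_t u_{\x'} \iv[S]{\vvect{p'}{q'}}} < \prt{1 + 2e^{2t}\rho}^d.}
Multiplying this by the hypothesis $\height{g_t u_{\x} \iv[S]{\vvect{p}{q}}} \leq \varepsilon = 2^{-d}\prt{1 + 2e^{2t}\rho}^{-d}$ gives
\eq{}{\height{g_t u_{\x} \iv[S]{\vvect{p}{q}}} \cdot \height{g_t u_{\x} \iv[S]{\vvect{p'}{q'}}} < 2^{-d}.}
Now I would set $u = g_t u_{\x} \in \SL_2(K_S)$ and apply Lemma~\ref{Lem:Unique1} to the vectors $\vect{z} = u\iv[S]{\vvect{p}{q}}$ and $\vect{z}' = u\iv[S]{\vvect{p'}{q'}}$; the displayed product being $< 2^{-d}$ forces $\frac{p}{q} = \frac{p'}{q'}$, contradicting $\frac{p'}{q'} \neq \frac{p}{q}$.

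There is really no hard step here: the content is entirely in the bookkeeping of the constant, and the statement is arranged so that the factors cancel exactly. The point is that the definition of $\varepsilon$ already absorbs the worst-case expansion factor $\prt{1 + 2e^{2t}\rho}^{d}$ coming from the diameter $2\rho$ of $\B$; transferring everything to $\x$—rather than, say, routing both estimates through the center $\y$, which would produce the larger factor $\prt{1 + e^{2t}\rho}^{2d} \geq \prt{1 + 2e^{2t}\rho}^{d}$ and thus fail—is exactly what makes the product land below $2^{-d}$. The only subtlety worth flagging is that the strict inequality required by Lemma~\ref{Lem:Unique1} must be maintained, and it is, because the contrapositive assumption $\height{g_t u_{\x'} \iv[S]{\vvect{p'}{q'}}} < 1$ is strict, so the transferred bound, and hence the product, is strict as well.
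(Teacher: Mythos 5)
Your proposal is correct and follows essentially the same argument as the paper: assume $\height{g_t u_{\x'} \iv[S]{\vvect{p'}{q'}}} < 1$ for a contradiction, transfer this bound to the base point $\x$ via Lemma~\ref{Lem:Bounding H} using $\norm{\x - \x'} \leq 2\rho$, multiply by the hypothesis on $\height{g_t u_{\x} \iv[S]{\vvect{p}{q}}}$ so the product falls below $2^{-d}$, and invoke Lemma~\ref{Lem:Unique1} with $u = g_t u_\x$. The bookkeeping of the constant $\varepsilon$ matches the paper's exactly, so there is nothing to correct.
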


\begin{proof}
	Assume by contradiction that there exists $\x' \in \B$, $p' \in \BO$, $q' \in \BO \smallsetminus \cbrk{0}$, $\frac{p}{q} \neq \frac{p'}{q'}$, such that
	\eq{}{\height{g_t u_{\x'} \iv[S]{\vvect{p'}{q'}}} < 1.}
	
	Then by Lemma~\ref{Lem:Bounding H}, 
	\eq{}{\height{g_t u_{\x} \iv[S]{\vvect{p'}{q'}}} \leq \height{g_t u_{\x'} \iv[S]{\vvect{p'}{q'}}} \cdot \prt{1 + e^{2t} \norm{\x - \x'} }^d  < \prt{1 + 2 e^{2t} \rho}^d.}
	
	That implies
	\eq{}{\height{g_t u_{\x} \iv[S]{\vvect{p}{q}}} \cdot \height{g_t u_{\x} \iv[S]{\vvect{p'}{q'}}} < \varepsilon \cdot \prt{1 + 2 e^{2t}\rho}^d = 2^{-d},}
	contradicting Lemma~\ref{Lem:Unique1}.
\end{proof}

Let $f(t)$ be a function of $t$. Denote the forward derivative of $f$ at $t$ to be:
\eq{Forward Der}{\frac{d}{dt}^+ (f(t)) = \lim_{h \to 0^+} \frac{f(t + h) - f(t)}{h},}
then for any $\x \in K_{S}$, $p \in \BO$, $q \in \BO \smallsetminus \cbrk{0}$, $\frac{d}{dt}^+ \height{g_t u_\x \iv[S]{\vvect{p}{q}}}$ exists for every $t \geq 0$.  Note that the two-sided derivative $\frac{d}{dt} \height{g_t u_\x \iv[S]{\vvect{p}{q}}}$ will fail to exist at the time $t$ for which $e^{-t} = e^t \abs{\iv[v]{q}x_v + \iv[v]{p}}$ for some $v \in S$.

Moreover, if we denote 
\eq{Tx}{T_{\x, t} = \cbrk{v \in S: \abs{x_v + \iv[v]{\frac{p}{q}}} < e^{-2t}},}
then
\begin{align}
\begin{split} \lab{eq:H Der}
	\frac{d}{dt}^+ \log \height{g_t u_\x \iv[S]{\vvect{p}{q}}} &= \frac{d}{dt}^+ \log \prod_{v \in S} \max \cbrk{e^{-t} \abs{\iv[v]{q}}, e^t \abs{\iv[v]{q} x_v + \iv[v]{p}}}^{d_v} \\
		&= \sum_{v \in S} d_v \frac{d}{dt}^+ \log \max \cbrk{e^{-t} \abs{\iv[v]{q}}, e^t \abs{\iv[v]{q} x_v + \iv[v]{p}}} \\
		&= \sum_{v \in S} d_v \frac{d}{dt}^+ \prt{\log \max \cbrk{e^{-t}, e^t \abs{x_v + \iv[v]{\frac{p}{q}}}} + \log \abs{\iv[v]{q}}} \\
		&= \sum_{v \in S \smallsetminus T_{\x, t}} d_v \frac{d}{dt}^+ {\log e^{t} \abs{x_v + \iv[v]{\frac{p}{q}}}} + \sum_{v \in T_{\x, t}} d_v \frac{d}{dt}^+ {\log e^{-t}} \\
		&= \sum_{v \in S \smallsetminus T_{\x, t}} d_v - \sum_{v \in T_{\x, t}} d_v.
\end{split}
\end{align}

This gives us a trivial bound of the forward derivatives:
\eq{H Der Bounded}{\abs{\frac{d}{dt}^+ \log \height{g_t u_\x \iv[S]{\vvect{p}{q}}}} \leq d.}

\begin{Rem} \lab{Rem:HDer nonneg}
	It also follows from~\eqr{H Der} that  
	\eq{}{\# \prt{T_{\x, t}}_\R + 2 \cdot \# \prt{T_{\x, t}}_\CO \leq \frac{d}{2}} 
	if and only if
	\eq{}{\frac{d}{dt}^+ \log \height{g_t u_\x \iv[S]{\vvect{p}{q}}} \geq 0.}
\end{Rem}

From this remark, we can deduce that every subspace of $\Hc_K$ defined in~\eqr{Hc} does not contain any badly approximable $S$-numbers:

\begin{Prop} \lab{Prop:Optimal}
	For any $\Lc \in \Hc_K$,
	\eq{}{\Lc \cap \BA_K = \varnothing.}
\end{Prop}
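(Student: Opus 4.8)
The plan is to deduce $\Lc\cap\BA_K=\varnothing$ from the Dani correspondence (Proposition~\ref{Prop:Dani Corr}): for an arbitrary $\y\in\Lc$ I will exhibit a single nonzero vector of the lattice $g_t\Lambda_\y$ whose height tends to $0$ as $t\to\infty$, forcing $\inf_{t\geq 0}\delta_H\prt{g_t\Lambda_\y}=0$ and hence $\y\notin\BA_K$. Write $\Lc=\hplane{\iv[S]{a}}{T}$ with $a\in K$ and $\#T_\R+2\prt{\#T_\CO}>\frac d2$, and fix $p\in\BO$, $q\in\BO\smallsetminus\cbrk{0}$ with $a=p/q$. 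The vector I would track is $g_t u_\y\iv[S]{\vvect{-p}{q}}$, which lies in $g_t\Lambda_\y$ and is nonzero since $q\neq 0$.

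The crux is that membership in $\Lc$ makes the height of this vector decay at a definite exponential rate, and this is exactly where the dimension hypothesis on $T$ enters. Indeed, since $y_v=\iv[v]{a}=\iv[v]{p/q}$ for every $v\in T$, with the pair $(-p,q)$ we have $\abs{y_v-\iv[v]{a}}=0<e^{-2t}$ for all such $v$; thus, in the notation of~\eqr{Tx} applied to the pair $(-p,q)$, we get $T\subseteq T_{\y,t}$ for every $t\geq 0$. Feeding this into the exact identity~\eqr{H Der} and using $\sum_{v\in S}d_v=d$ gives
\eq{}{\frac{d}{dt}^+\log\height{g_t u_\y\iv[S]{\vvect{-p}{q}}}=d-2\sum_{v\in T_{\y,t}}d_v\leq d-2\sum_{v\in T}d_v=-\eta,}
where $\eta:=2\prt{\#T_\R+2\prt{\#T_\CO}}-d$ is a positive integer by the defining condition of $\Hc_K$. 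Equivalently, this is just the strict form of Remark~\ref{Rem:HDer nonneg}, since $T\subseteq T_{\y,t}$ keeps $\#\prt{T_{\y,t}}_\R+2\cdot\#\prt{T_{\y,t}}_\CO>\frac d2$ throughout the orbit.

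Finally I would integrate. The map $t\mapsto\log\height{g_t u_\y\iv[S]{\vvect{-p}{q}}}$ is continuous — it is a finite sum of logarithms of maxima of two exponentials — and its forward derivative is $\leq-\eta<0$ for every $t\geq 0$; a standard fact for continuous functions then yields $\log\height{g_t u_\y\iv[S]{\vvect{-p}{q}}}\leq\log\height{u_\y\iv[S]{\vvect{-p}{q}}}-\eta t\to-\infty$. Hence $\delta_H\prt{g_t\Lambda_\y}\leq\height{g_t u_\y\iv[S]{\vvect{-p}{q}}}\to 0$, and Proposition~\ref{Prop:Dani Corr} gives $\y\notin\BA_K$; as $\y\in\Lc$ was arbitrary, $\Lc\cap\BA_K=\varnothing$. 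The only point requiring care is the passage from a pointwise one-sided derivative bound to global linear decay of a function that is only piecewise linear (not differentiable at the finitely many times where some $v$ enters or leaves $T_{\y,t}$); this is routine, but it is precisely where the exact derivative formula~\eqr{H Der}, rather than the trivial bound~\eqr{H Der Bounded}, is indispensable.
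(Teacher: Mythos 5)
Your proof is correct and takes essentially the same route as the paper's own argument: fix $\y\in\Lc$, observe that $T\subseteq T_{\y,t}$ for all $t\geq 0$, invoke the derivative identity~\eqr{H Der} (equivalently the strict form of Remark~\ref{Rem:HDer nonneg}) to get a uniformly negative forward derivative of the log-height of one fixed lattice vector, and conclude via Proposition~\ref{Prop:Dani Corr}. If anything you are slightly more careful than the paper, which writes the vector as $\iv[S]{\vvect{p}{q}}$ where the sign convention in~\eqr{Tx} really calls for $\iv[S]{\vvect{-p}{q}}$, and which leaves implicit the routine integration of the one-sided derivative bound that you spell out.
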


\begin{proof}
	Let $\y \in \Lc = \Lc(\x, T) \in \Hc_K$ be arbitrary, and let $p, q \in \BO$ such that $\x = \iv[S]{\frac{p}{q}}$.  Since for every $v \in T$,
	\eq{}{\abs{\y_v - \iv[v]{\frac{p}{q}}} = \abs{\y_v - \x_v} = 0,}
	we have that for every $t \geq 0$,
	\eq{}{T \subseteq T_{\y, t}.}
	In particular,
	\eq{}{\# \prt{T_{\y, t}}_\R + 2 \cdot \# \prt{T_{\y, t}}_\CO > \frac{d}{2}.} 
	So by Remark~\ref{Rem:HDer nonneg}, for every $t \geq 0$,
	\eq{}{\frac{d}{dt}^+ \log \height{g_t u_\y \iv[S]{\vvect{p}{q}}} \leq -1.}
	That implies
	\eq{}{\lim_{t \to \infty} \height{g_t u_\y \iv[S]{\vvect{p}{q}}} = 0.}
	Thus, $\y \notin \BA_K$.
\end{proof}

Moreover, the forward derivatives provide us another criterion for an $\x \in K_{S}$ to be badly approximable, which will be used in the proof of Theorem~\ref{Thm:Main}:
\begin{Lem}	\lab{Lem:Bad}
	 $\x \in \BA_{K}$ if and only if there exists $\varepsilon > 0, c > 0$, and a sequence $0 < t_1 < t_2 < ...$ with $\lim\limits_{n \to \infty} t_n = \infty$ and $t_{n + 1} - t_n \leq c$ such that for every $p, q \in \BO, (p, q) \neq (0, 0)$,
	\eq{BA flow}{\text{either } \height{g_{t_n} u_\x \iv[S]{\vvect{p}{q}}} \geq \varepsilon \text{\quad or \quad} \frac{d}{dt}^+ \log \height{ g_{t_n} u_\x \iv[S]{\vvect{p}{q}}} \geq 0.}
\end{Lem}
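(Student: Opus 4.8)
The plan is to translate everything through Dani's correspondence (Proposition~\ref{Prop:Dani Corr}) and then exploit the convexity of the log-height along the flow. For $(p, q) \neq (0, 0)$ set $F_{p, q}(t) := \log \height{g_t u_\x \iv[S]{\vvect{p}{q}}}$, so that, under the identification $g\Gamma \mapsto \iv[S]{g}\BO^2$, we have $\delta_H\prt{g_t \Lambda_\x} = \min_{(p, q) \neq 0} e^{F_{p, q}(t)}$, and by Proposition~\ref{Prop:Dani Corr}, $\x \in \BA_K$ is equivalent to $\inf\cbrk{F_{p, q}(t) : t \geq 0, (p, q) \neq 0} > -\infty$. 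The forward implication is then immediate: if $\x \in \BA_K$ there is $\varepsilon > 0$ with $\height{g_t u_\x \iv[S]{\vvect{p}{q}}} \geq \varepsilon$ for all $t \geq 0$ and all $(p, q)$, so taking $t_n = n$ and $c = 1$ the first alternative in~\eqr{BA flow} holds at every step. The substance is the reverse implication.

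The key structural input is~\eqr{H Der}: since $T_{\x, t}$ shrinks as $t$ grows, the forward derivative $\frac{d}{dt}^+ F_{p, q}(t) = \sum_{v \notin T_{\x, t}} d_v - \sum_{v \in T_{\x, t}} d_v$ is non-decreasing in $t$, so each $F_{p, q}$ is convex, and by~\eqr{H Der Bounded} it is also $d$-Lipschitz. Consequently $F_{p, q}$ is unimodal: it is non-increasing up to a minimizing time $t^*_{p, q} \in [0, \infty]$ (possibly $+\infty$) and non-decreasing afterwards, with $\frac{d}{dt}^+ F_{p, q} < 0$ strictly before $t^*_{p, q}$ and $\geq 0$ at and after it.

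For the reverse implication I would, assuming the hypothesis with constants $\varepsilon, c$ and sampling times $(t_n)$, prove the uniform lower bound $\height{g_t u_\x \iv[S]{\vvect{p}{q}}} \geq \delta$ for all $t \geq 0$ and $(p, q) \neq 0$, where $\delta := \min \cbrk{1, e^{-d t_1}, e^{-dc} \varepsilon}$; this gives $\inf_{t \geq 0} \delta_H\prt{g_t \Lambda_\x} \geq \delta > 0$, hence $\x \in \BA_K$. A direct computation yields $\height{u_\x \iv[S]{\vvect{p}{q}}} = \abs{N(q)} \prod_{v \in S} \max \cbrk{1, \abs{x_v + \iv[v]{\frac{p}{q}}}}^{d_v} \geq 1$ when $q \neq 0$ (and $\height{g_t u_\x \iv[S]{\vvect{p}{0}}} = e^{td} \abs{N(p)} \geq 1$ when $q = 0$). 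Thus the bound is clear whenever the minimum of $F_{p, q}$ on $[0, \infty)$ is at $t = 0$, and also when $0 < t^*_{p, q} \leq t_1$ via the Lipschitz estimate $F_{p, q}(t^*_{p, q}) \geq F_{p, q}(0) - d\, t^*_{p, q}$. When $t_1 < t^*_{p, q} < \infty$, the minimum of $\height{g_t u_\x \iv[S]{\vvect{p}{q}}}$ over $t \geq 0$ equals $e^{F_{p, q}(t^*_{p, q})}$; choosing the largest sampling time $t_N \leq t^*_{p, q}$ (and replacing it by $t_{N - 1}$, which exists since then $N \geq 2$, in the boundary case $t_N = t^*_{p, q}$) produces a time with $\frac{d}{dt}^+ F_{p, q}(t_N) < 0$ and $t^*_{p, q} - t_N \leq c$, so the hypothesis forces $\height{g_{t_N} u_\x \iv[S]{\vvect{p}{q}}} \geq \varepsilon$ and the Lipschitz bound gives $e^{F_{p, q}(t^*_{p, q})} \geq e^{-dc} \varepsilon$. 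Finally, if $\frac{d}{dt}^+ F_{p, q} < 0$ for all $t \geq 0$ (so $t^*_{p, q} = \infty$), then every $t_n$ triggers the first alternative, and monotonicity propagates $\height{g_t u_\x \iv[S]{\vvect{p}{q}}} \geq \varepsilon$ to all $t$.

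The main obstacle is precisely the bookkeeping in this last paragraph: converting information sampled only at the discrete, unevenly spaced times $t_n$ into a bound valid at the continuous minimizer $t^*_{p, q}$, uniformly over all $(p, q)$. The monotonicity of $\frac{d}{dt}^+ F_{p, q}$ is what makes this possible, since it guarantees that some sampling time within distance $c$ of $t^*_{p, q}$ still sees a negative forward derivative, while the elementary estimate $\height{u_\x \iv[S]{\vvect{p}{q}}} \geq 1$ disposes of the boundary behaviour near $t = 0$, where no earlier sampling time is available.
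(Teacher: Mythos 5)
Your proposal is correct and follows essentially the same route as the paper's proof: the forward direction is immediate from Proposition~\ref{Prop:Dani Corr}, and the reverse direction rests on the three ingredients the paper uses — $\height{g_0 u_\x \iv[S]{\vvect{p}{q}}} \geq 1$, the Lipschitz bound \eqr{H Der Bounded}, and the monotonicity of the forward derivative implicit in \eqr{H Der} — arriving at the same constant $\min \cbrk{e^{-d t_1}, \varepsilon e^{-dc}}$ (your extra term $1$ is redundant since $e^{-dt_1} \leq 1$). The only difference is expository: the paper compresses the convexity/unimodality bookkeeping into a single displayed inequality, whereas you spell out the case analysis over the location of the minimizer $t^*_{p,q}$ relative to the sampling times, which is exactly the argument needed to justify that display.
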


\begin{proof}
	If $\x \in \BA_{K}$, let $\varepsilon = \inf\cbrk{\delta_H \prt{g_t \Lambda_x}: t \geq 0} $; it is positive by Proposition~\ref{Prop:Dani Corr}.  Then clearly $\varepsilon$ satisfies~\eqr{BA flow} for every sequence $t_n$.
	For the converse, since $\height{g_0 u_\x \iv[S]{\vvect{p}{q}}} \geq 1$ and by~\eqr{H Der Bounded}, for every $p, q \in \BO$ with $(p, q) \neq (0, 0)$ and for every $t \geq 0$:
	\eq{}{
		\height{g_t u_\x \iv[S]{\vvect{p}{q}}} \geq \min \cbrk{e^{-dt_1}, \inf_{n \geq 1} \prt{\varepsilon e^{-d \prt{t_{n + 1} - t_n}}}} \geq \min \cbrk{e^{-dt_1}, \varepsilon e^{-dc}} > 0.
	}
	Hence, $\x \in \BA_{K}$.
\end{proof}

We end this section with the proof of Theorem~\ref{Thm:Main}.  As usual, we will provide a strategy for Alice, and show that  it is indeed a winning strategy for $\BA_{K}$.

\begin{proof}[Proof of Theorem~\ref{Thm:Main}]
	We remark that since $K_S$ is a real Banach space, $(\x, r)$ and $(\Lc, \rho)$ are uniquely defined by $\B(\x, r)$ and $\Lc^{(\rho)}$ respectively. 
	So to ease the notation, we will identify $B_n$ and $A_n$ with the corresponding sets, and use $\B_n$ and $\mathbf{A}_n$ for this identification in this proof. 
	
	Fix $\beta > 0$, and let $\B_n = \B(\x_n, \rho_n)$ be Bob's arbitrary $n^{\text{th}}$ move.  Without loss of generality, assume that $\rho_1 < 1$ and $\rho_n \to 0$.  Let
	\eq{tn}{t_n = -\frac{1}{2} \log \prt{\beta \rho_n},}
	and let
	\eq{eps}{\varepsilon = 2^{-d} \prt{1 + \frac{2}{\beta}}^{-d}.}
	Then
	\eq{tn bounded}{t_{n + 1} \leq t_n + \frac{1}{2} \log \prt{\frac{1}{\beta}} \text{ since } \rho_{n + 1} \geq \beta \rho_n.}
	
	If at $n^{\text{th}}$ stage of the game, $\delta_H \prt{g_{t_n} \Lambda_\x} \geq \varepsilon$ for every $\x \in \B_n$, then Alice can make arbitrary move.  Otherwise, let $\x \in \B_n$, $p_n \in \BO$, $q_n \in \BO \smallsetminus \cbrk{0}$ such that:
	\eq{}{\height{g_{t_n} u_\x \iv[S]{\vvect{p_n}{q_n}}} < \varepsilon,}
	then by Lemma~\ref{Lem:Unique2} and Remark~\ref{Rem:HDer nonneg}, Alice only has to worry about those subspaces in $\Hc_K$ passing through $\iv[S]{\frac{p_n}{q_n}}$.  
	
	For her $n^{\text{th}}$ move, Alice will pick the $(\beta \rho_n)$-neighborhood of all the subspaces in $\Hc_K$ passing through $\iv[S]{\frac{p_n}{q_n}}$:
	\eq{An}{\mathbf{A}_n = \bigcup_{\# T_\R + 2(\# T_\CO) > \frac{d}{2}} \hplane{\iv[S]{\frac{p_n}{q_n}}}{T}^{\prt{\beta \rho_n}}.}
	Since $\beta \rho_n = e^{-2t_n}$, it follows from~\eqr{H Der} and Remark~\ref{Rem:HDer nonneg} that, for every $\x \in \B_n \smallsetminus \mathbf{A}_n$ and for every $p, q \in \BO$, $(p, q) \neq (0, 0)$:
	\eq{}{
		\text{either } \height{g_{t_n} u_\x \iv[S]{\vvect{p}{q}}} \geq \varepsilon \text{ or }  \frac{d}{dt}^+ \log \height{ g_{t_n} u_\x \iv[S]{\vvect{p}{q}}} \geq 0.
	}
	So $\x_\infty = \bigcap_{n = 1}^\infty B_n$ satisfies the conditions of Lemma~\ref{Lem:Bad}, and hence, $\x_\infty \in \BA_{K}$.  Thus, by Proposition~\ref{Prop:NHWinning}, $\BA_{K}$ is $\Hc_K$-absolute winning.
\end{proof}







\section*{Appendix A.\texorpdfstring{\quad}{ } 
Hattori's approach to badly approximable \texorpdfstring{$S$}{S}-numbers} \lab{Sec:Appx A}
\renewcommand{\thesection}{A}
\stepcounter{section}
\setcounter{Thm}{0}
\setcounter{equation}{0}
\addtocontents{toc}{\protect\setcounter{tocdepth}{1}}
Hattori~\cite{Hattori07} proved the following version of Dirichlet's Theorem:

\begin{Thm}[{\cite[Theorem 1, 2]{Hattori07}}] \lab{Thm:Hattori DT}
	If $K$ be a real quadratic or totally complex quartic number field, then there is a constant $C = C_K > 0$ depending only on $K$ such that for every $\x \in K_S \smallsetminus \iv[S]{K}$, there are infinitely many $p \in \BO$, $q \in \BO \smallsetminus \cbrk{0}$ satisfying:
	\eq{}{\norm{\x + \iv[S]{\frac{p}{q}}} \leq \norm{\iv[S]{q}}^{-2}.}.
\end{Thm}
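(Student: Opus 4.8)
My plan is to produce, for each ``denominator scale'', a single good approximation from the Strong Dirichlet Theorem (Proposition~\ref{Prop:Strong Dirichlet}), and then to rebalance the denominator by a unit so that the resulting estimate acquires the quadratic shape of the statement. Two features of the fields at hand drive everything: $S = \cbrk{v_1, v_2}$ has exactly two places, and the two local degrees agree, $d_{v_1} = d_{v_2} = d/2$ (two real places with $d_{v_i} = 1$ in the real quadratic case, two complex places with $d_{v_i} = 2$ in the totally complex quartic case). Fix $\x \in K_S \smallsetminus \iv[S]{K}$ and $Q > 0$; Proposition~\ref{Prop:Strong Dirichlet} yields $p \in \BO$, $q \in \BO \smallsetminus \cbrk{0}$ with $\abs{\iv[v]{q} x_v + \iv[v]{p}} \leq C_K Q^{-1}$ for each $v$ and $\norm{\iv[S]{q}} \leq Q$, where $C_K$ is the constant of that proposition. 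Writing $M_v := \abs{\iv[v]{q}}$, the whole point is that dividing the Strong Dirichlet estimate by $M_v$ turns it into the quantity appearing in the statement: $\abs{x_v + \iv[v]{\frac{p}{q}}} = \abs{\iv[v]{q} x_v + \iv[v]{p}} / M_v \leq C_K Q^{-1}/M_v$.

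The next step is the arithmetic where the two hypotheses pay off. Since $\abs{N(q)} = \prt{M_{v_1} M_{v_2}}^{d/2} \geq 1$ and $\max\cbrk{M_{v_1}, M_{v_2}} = \norm{\iv[S]{q}} \leq Q$, taking the maximum over the two places gives
\[ \norm{\x + \iv[S]{\frac{p}{q}}} = \max_{v} \frac{\abs{\iv[v]{q} x_v + \iv[v]{p}}}{M_v} \leq \frac{C_K Q^{-1}}{\min\cbrk{M_{v_1}, M_{v_2}}} = \frac{C_K Q^{-1} \max\cbrk{M_{v_1}, M_{v_2}}}{M_{v_1} M_{v_2}} \leq \frac{C_K}{M_{v_1} M_{v_2}}, \]
where the middle equality is the elementary identity $M_{v_1} M_{v_2} = \min \cdot \max$, available because $\#S = 2$, and the last inequality uses $\max\cbrk{M_{v_1},M_{v_2}} \leq Q$. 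Now I would replace $(p, q)$ by $(\xi p, \xi q)$ for a unit $\xi \in \BO^\times$ balancing the denominator: applying Lemma~\ref{Lem:Height properties} to the vector $\iv[S]{\vvect{q}{0}}$, whose height equals $\abs{N(q)} \neq 0$, produces $\xi$ with $\abs{\iv[v]{\xi q}} \asymp \abs{N(q)}^{1/d} = \prt{M_{v_1} M_{v_2}}^{1/2}$ for both $v$, so that $\norm{\iv[S]{\xi q}} \asymp \prt{M_{v_1} M_{v_2}}^{1/2}$. The fraction $p/q = \xi p/\xi q$ is unchanged, hence so is the left-hand side above, and combining the two displays gives $\norm{\x + \iv[S]{\frac{p}{q}}} \leq C_K/(M_{v_1} M_{v_2}) \leq C\,\norm{\iv[S]{\xi q}}^{-2}$ with $C = C_K$ depending only on $K$; this is the asserted inequality for the pair $(\xi p, \xi q)$.

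To pass from one solution per scale to infinitely many, I would let $Q \to \infty$ in the manner of the proof of Theorem~\ref{Thm:Weak Dirichlet}. The Strong Dirichlet estimate in fact controls the error height,
\[ \height{\x + \iv[S]{\frac{p}{q}}} = \prod_{v} \abs{x_v + \iv[v]{\frac{p}{q}}}^{d_v} \leq \frac{\prt{C_K Q^{-1}}^d}{\abs{N(q)}} \leq \prt{C_K Q^{-1}}^d, \]
which tends to $0$ as $Q \to \infty$. Since $\x \notin \iv[S]{K}$, at most finitely many fractions $r$ satisfy $\height{\x + \iv[S]{r}} = 0$ (at most one per place, by injectivity of each $\iv[v]{}$), while any single fraction satisfies the asserted inequality for only finitely many denominators, its right-hand side $\norm{\iv[S]{q}}^{-2}$ tending to $0$ under scaling of $q$. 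Thus the solutions produced as $Q \to \infty$ cannot all come from a fixed finite set of fractions, and one obtains infinitely many of them.

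The crux, and the step most likely to cause trouble, is the balancing in the second paragraph together with its accompanying arithmetic: the identity $M_{v_1} M_{v_2} = \min \cdot \max$ uses $\#S = 2$, whereas $\abs{N(q)}^{1/d} = \prt{M_{v_1} M_{v_2}}^{1/2}$ uses $d_{v_1} = d_{v_2}$, and both simplifications collapse for a general number field. This is precisely why the quadratic exponent in \eqr{BA'}, and with it the present Dirichlet theorem, are confined to the real quadratic and totally complex quartic cases.
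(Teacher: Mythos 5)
Your reduction is sound, and it is essentially the route the paper itself envisions (the paper quotes this theorem from Hattori and only remarks, after Proposition~\ref{Prop:Same BA}, that it can be re-proved by combining that proposition's unit-rebalancing computations with Theorem~\ref{Thm:Weak Dirichlet}): Proposition~\ref{Prop:Strong Dirichlet}, the two-place identity $M_{v_1}M_{v_2} = \min \cdot \max$, and Lemma~\ref{Lem:Height properties} applied to $\iv[S]{\vvect{q}{0}}$ (whose height is $\abs{N(q)} \geq 1$) do produce a pair $(\xi p, \xi q)$ with $\norm{\x + \iv[S]{\frac{p}{q}}} \leq C' \norm{\iv[S]{\xi q}}^{-2}$. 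One cosmetic slip: the constant is $C' = C_K C^2$, with $C$ the constant of Lemma~\ref{Lem:Height properties}, not $C' = C_K$ as you write; it still depends only on $K$, so no harm is done.

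The genuine gap is in your final paragraph. The hypothesis $\x \notin \iv[S]{K}$ guarantees $\norm{\x + \iv[S]{r}} > 0$ for every fraction $r$, but it does \emph{not} guarantee $\height{\x + \iv[S]{r}} > 0$: the height vanishes as soon as a \emph{single} coordinate matches, i.e.\ whenever $x_v \in \iv[v]{K}$ for some one place $v$. You correctly note there are at most $\#S$ such exceptional fractions, but your decay bound $\height{\x + \iv[S]{\frac{p}{q}}} \leq \prt{C_K Q^{-1}}^d$ excludes only the positive-height fractions from the production as $Q \to \infty$; it says nothing about the zero-height ones, and your two observations do not add up to the claimed conclusion that ``the solutions produced cannot all come from a fixed finite set of fractions.'' Concretely, take $K$ real quadratic and $\x = (0, \alpha)$ with $\alpha \notin \iv[v_2]{K}$ and $\abs{\alpha}$ small: for every large $Q$ the pair $p = 0$, $q = \xi^n$, with $\xi$ a fundamental unit and $n$ chosen so that $\abs{\alpha} Q / C_K \leq \abs{\iv[v_1]{\xi^n}} \leq Q$ (possible once $C_K \geq \abs{\alpha} \cdot \abs{\iv[v_1]{\xi}}$), satisfies the conclusion of Proposition~\ref{Prop:Strong Dirichlet}, so nothing prevents your construction from outputting the single fraction $0$ at every scale; after rebalancing, all these pairs land in a bounded --- hence, by discreteness of $\iv[S]{\BO}$, finite --- set, and no infinitude is obtained. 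Your argument is therefore complete only for those $\x$ with $x_v \notin \iv[v]{K}$ for all $v \in S$; the remaining case needs a separate idea, e.g.\ forcing the produced fraction away from the finitely many exceptional ones, or arguing as in Theorem~\ref{Thm:Weak Dirichlet} with the linear form $\norm{\iv[S]{q} \cdot \x + \iv[S]{p}}$, which is positive for \emph{every} pair when $\x \notin \iv[S]{K}$ --- though one must then still rule out that all the pairs so produced are unit multiples of a single exceptional pair, which is exactly the point your present write-up leaves open.
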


Hence, we say that $\x \in K_{S} \smallsetminus \iv[S]{K}$ \emph{badly approximable in Hattori's sense} if there exists $c > 0$ such that for every $p \in \BO$, $q \in \BO \smallsetminus \cbrk{0}$,
\eq{}{\norm{\x + \iv[S]{\frac{p}{q}}} > c \norm{\iv[S]{q}}^{-2},}
or equivalently,
\eq{Hat BA-inf}{\inf \cbrk{ \norm{\iv[S]{q}}^2 \cdot \norm{\x + \iv[S]{\frac{p}{q}}} : p, q \in \BO, q \neq 0 } > 0.}
The set of $S$-numbers badly approximable in Hattori's sense is denoted by $\BA'_K$.

\begin{Prop} \lab{Prop:Same BA}
	Let $K$ be a real quadratic $(d = 2 = \#S)$ or totally complex quartic $(d = 4 = 2\#S)$ number field; then $\BA_K = \BA'_K$.
\end{Prop}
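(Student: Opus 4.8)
The plan is to reduce both conditions to a comparison of two scalar quantities. Since $\#S = 2$ in both cases, write $S = \cbrk{v_1, v_2}$ with equal local degrees $d_{v_1} = d_{v_2} = d/2$, and for $p \in \BO$, $q \in \BO \smallsetminus \cbrk{0}$ set $a_i = \abs{\iv[v_i]{q}}$ and $\delta_i = \abs{x_{v_i} + \iv[v_i]{p/q}}$ for $i \in \cbrk{1,2}$. Using the coordinatewise identity $\iv[v]{q} x_v + \iv[v]{p} = \iv[v]{q}\bigl(x_v + \iv[v]{p/q}\bigr)$, the quantity governing $\BA_K$ is $Q(p,q) = \norm{\iv[S]{q}}\,\norm{\iv[S]{q}\cdot\x + \iv[S]{p}} = (\max_i a_i)\,(\max_i a_i\delta_i)$, while the one governing $\BA'_K$ is $H(p,q) = \norm{\iv[S]{q}}^2\,\norm{\x + \iv[S]{p/q}} = (\max_i a_i)^2\,(\max_i \delta_i)$; compare \eqr{BA in K-inf} and \eqr{Hat BA-inf}. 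Thus $\x \in \BA_K$ iff $\inf_{p,q} Q > 0$ and $\x \in \BA'_K$ iff $\inf_{p,q} H > 0$, and both classes exclude $\iv[S]{K}$ (for which both infima vanish). It therefore suffices to show that $\inf_{p,q} Q$ and $\inf_{p,q} H$ are simultaneously positive.

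The inclusion $\BA_K \subseteq \BA'_K$ is immediate: from $a_i \delta_i \le (\max_j a_j)\delta_i$ we get $\max_i(a_i\delta_i) \le (\max_j a_j)(\max_i\delta_i)$, hence $Q \le H$ pointwise, so $\inf H \ge \inf Q$. For the reverse inclusion I would exploit the unit group. The decisive feature of these two fields, and precisely the reason the mixed cubic case is excluded, is that all local degrees coincide, so that $P := a_1 a_2$ equals $\abs{N(q)}$ up to a fixed power. Consequently $P$ is unchanged when $q$ is multiplied by a unit $\xi$, since $\abs{N(\xi)} = 1$ forces $\abs{\iv[v_1]{\xi}}\,\abs{\iv[v_2]{\xi}} = 1$, while each $\delta_i$ depends only on the fraction $p/q$ and is likewise unchanged. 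Because $\#S = 2$, Dirichlet's unit theorem gives a unit group of rank $1$; letting $\Lambda > 1$ be the larger absolute value of a fundamental unit, I can replace any $(p,q)$ by a \emph{balanced} representative $(\xi p, \xi q)$ of the same fraction for which $\Lambda^{-1}\sqrt{P} \le a_i \le \Lambda\sqrt{P}$ for both $i$, by choosing the power of the fundamental unit that carries $\tfrac12 \log(a_1/a_2)$ into a fundamental domain for the rank-$1$ lattice.

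Two elementary bounds then close the argument. First, for \emph{every} representative one has $Q \ge P\max_i\delta_i$: choosing $m$ with $\delta_m = \max_i\delta_i$ and letting $m'$ be the other index, $\max_i(a_i\delta_i) \ge a_m\delta_m$ and $\max_i a_i \ge a_{m'}$, so $Q \ge a_{m'} a_m \delta_m = P\max_i\delta_i$. Second, for a balanced representative $(\max_i a_i)^2 \le \Lambda^2 P$, whence $H \le \Lambda^2 P \max_i\delta_i$. Combining these, if $(p',q')$ is the balanced representative attached to an arbitrary $(p,q)$, then $H(p',q') \le \Lambda^2 P\max_i\delta_i \le \Lambda^2 Q(p,q)$, using that $P$ and the $\delta_i$ are unchanged under the balancing. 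Hence whenever $Q(p_n,q_n) \to 0$ along a sequence, the associated balanced representatives satisfy $H(p_n',q_n') \to 0$, so $\inf H = 0$; this yields $\BA'_K \subseteq \BA_K$ and, with the easy inclusion, the equality $\BA_K = \BA'_K$.

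I expect the balancing step to be the only genuine obstacle, as it is exactly where the hypotheses are used: $P = a_1 a_2$ is preserved by units precisely under the equal-local-degree condition encoded in ``$d = 2 = \#S$'' and ``$d = 4 = 2\#S$'', and the rank-$1$ unit group makes a single fundamental unit enough to equalize the two Archimedean sizes of $q$. Everything else is routine manipulation of the sup norm.
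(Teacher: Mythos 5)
Your proposal is correct, and while the easy inclusion and the role of units match the paper, your treatment of the hard inclusion $\BA'_K \subseteq \BA_K$ is genuinely different: it is purely algebraic and bypasses the dynamical machinery. The paper gets from the norm-form quantity $\abs{N(q)}\,\max_v \abs{x_v + \iv[v]{\frac{p}{q}}}^{d_v}$ (your $P\max_i\delta_i$ raised to the power $d/2$; this is exactly the quantity in \eqr{BA inf 2}) back to membership in $\BA_K$ by unwrapping the Dani correspondence (Proposition~\ref{Prop:Dani Corr}) into \eqr{BA unwrapped} and computing, via \eqr{prod 1} and \eqr{inf 1}, that for $\#S = 2$ with equal local degrees the infimum over the flow parameter $t$ recovers precisely that norm form; your pointwise inequality $Q \ge P\max_i\delta_i$ (take the $\delta$-maximizing index in one factor and the complementary index in the other) proves the same implication in two lines, with no dynamics. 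Your unit-balancing step, on the other hand, is not really a departure: it is a hands-on re-proof, in this rank-one, equal-degree situation, of Lemma~\ref{Lem:Height properties} (quoted in the paper from \cite[Lemma 2.4]{EinsiedlerGhoshLytle13}), which is what the paper invokes to relate the Hattori quantity $H$ to the norm form. What your route buys: a self-contained elementary argument with an explicit two-sided comparison $\inf Q \le \inf H \le \Lambda^2 \inf Q$, and a transparent accounting of where the hypotheses enter (equal degrees make $a_1a_2$ unit-invariant; $\#S = 2$ gives unit rank one and enables the index trick). What the paper's route buys: economy, since Lemma~\ref{Lem:Height properties} and Proposition~\ref{Prop:Dani Corr} are already in place and used elsewhere, plus the byproduct recorded in the remark after the proposition, namely that the flow computation combined with Theorem~\ref{Thm:Weak Dirichlet} re-proves Hattori's Dirichlet theorem. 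One shared reading convention worth stating explicitly: the infimum in \eqr{BA in K-inf} must be taken over $q \neq 0$, consistently with \eqr{BA in K}, which your setup tacitly assumes when defining $\delta_i$ through the fraction $p/q$.
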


\begin{proof}
	Recall that $\x \in \BA_{K}$ if and only if \eqr{BA in K-inf} holds. 
	Then clearly $\BA_K \subseteq \BA'_K$, since:
	\begin{align*}
		\norm{\iv[S]{q} \cdot \x + \iv[S]{p}} &= \max_{v \in S} \abs{\iv[v]{q} x_v + \iv[v]{p}} 
			= \max_{v \in S} \abs{\iv[v]{q} \prt{x_v + \iv[v]{\frac{p}{q}}}} \\
			&\leq \max_{v \in S} \abs{\iv[v]{q}} \cdot \max_{v \in S} \abs{x_v + \iv[v]{\frac{p}{q}}} 
			= \norm{\iv[S]{q}} \cdot \norm{ \x + \iv[S]{\frac{p}{q}}}.
	\end{align*}
	
	For the converse, first by using Lemma~\ref{Lem:Height properties}, for every $p \in \BO, q \in \BO \smallsetminus \cbrk{0}$, there exists a unit $\xi \in \BO^\times$ such that:
	\eq{}{
		\abs{N(q)} \cdot \max_{v \in S} \abs{x_v + \iv[v]{\frac{p}{q}}}^{d_v} \geq C^{-d} \norm{\iv[S]{\xi q}}^d \cdot \max_{v \in S} \abs{x_v + \iv[v]{\frac{p}{q}}}^{d_v} = C^{-d} \prt{\norm{\iv[S]{\xi q}}^2 \cdot \norm{x_v + \iv[v]{\frac{\xi p}{\xi q}}}}^\frac{d}{2} .
	}
	In particular, if $\x \in \BA_K'$ then
	\eq{BA inf 2}{\inf \cbrk{ \abs{N(q)} \cdot \max_{v \in S} \abs{x_v + \iv[v]{\frac{p}{q}}}^{d_v} : p, q \in \BO, q \neq 0 } > 0.}	
	Hence, it suffices to show that if $\x \in K_S \smallsetminus \iv[S]{K}$ satisfying~\eqr{BA inf 2}, then $\x \in \BA_{K}$.
	
	Unwrapping Proposition~\ref{Prop:Dani Corr}, we see that $\x \in \BA_{K}$ if and only if
	\eq{BA unwrapped}{\inf \cbrk{ \prod_{v \in S} \max \cbrk{ \abs{e^{-t} \iv[v]{q}}, \abs{e^{t} \prt{\iv[v]{q} x_v + \iv[v]{p}}}}^{d_v} : (p, q) \in \BO^2 \smallsetminus (0, 0), t \geq 0 } > 0.}

	When $q = 0, p \neq 0$,
	\eq{}{
		\prod_{v \in S} \max \cbrk{ \abs{e^{-t} \iv[v]{q}}, \abs{e^{t} \prt{\iv[v]{q} x_v + \iv[v]{p}}}}^{d_v} = \prod_{v \in S} \abs{e^t \iv[v]{p}}^{d_v} \geq |N(p)| \geq 1.
	}
	So it suffices to consider the case when $q \neq 0$ and the product in the left hand side of the above formula is $< 1$. In that case,
	\eq{prod 1}{\prod_{v \in S} \max \cbrk{ \abs{e^{-t} \iv[v]{q}}, \abs{e^{t} \prt{\iv[v]{q} x_v + \iv[v]{p}}}}^{d_v} = \abs{N(q)} \prod_{v \in S} \max \cbrk{ e^{-t}, e^t \abs{ x_v + \iv[v]{\frac{p}{q}}}}^{d_v}.}

	Since $\#S = 2$ and the $d_v$'s are both 1's or both 2's,
	\eq{inf 1}{\inf \cbrk{\prod_{v \in S} \max \cbrk{ e^{-t}, e^t \abs{ x_v + \iv[v]{\frac{p}{q}}}}^{d_v} : t \geq 0} = \max_{v \in S} \abs{x_v + \iv[v]{\frac{p}{q}}}^{d_v}.}

	Combining~\eqr{inf 1} and~\eqr{prod 1}, we have the equivalence of~\eqr{BA unwrapped} and~\eqr{BA inf 2}.
\end{proof}

\begin{Rem}
	Combining the proof of Proposition~\ref{Prop:Same BA} with Theorem~\ref{Thm:Weak Dirichlet} gives us another proof of Theorem~\ref{Thm:Hattori DT} of Hattori.
\end{Rem}

\section*{Appendix B.\texorpdfstring{\quad}{ } Proofs of properties of \texorpdfstring{$\Hc$}{H}-absolute winning sets} \lab{Sec:Appx B}
\renewcommand{\thesection}{B}
\stepcounter{section}
\setcounter{Thm}{0}
\setcounter{equation}{0}
\addtocontents{toc}{\protect\setcounter{tocdepth}{1}}
Since our definitions are slightly different from those found in~\cite{Schmidt66, BroderickFishmanKleinbockReichWeiss12}, we provide the proofs of basic properties of $\Hc$-absolute winning sets in this appendix for completeness. 

\subsection{Proof of Lemma~\ref{Lem:Extend beta}}
\lab{Appx:Proof of Extend beta}

	Let $0 < \beta' \leq \frac{\beta}{2 + \beta}$, and let $\sigma_A$ be an $(\Hc, \beta")$-absolute positional winning strategy for Alice for some $0 < \beta" \leq \beta'$.  Since $Y$ is $\beta$-diffuse, Lemma~\ref{Lem:diffuse} will guarantee that if Alice is using the $\sigma_A$ strategy in an $(\Hc, \beta')$-absolute game on $Y$, Bob will always have eligible moves, and the game will last infinitely.  And hence, it is a winning strategy for Alice in the $(\Hc, \beta')$-absolute game.
\qed

\subsection{Proof of Proposition~\ref{Prop:winning}}
\lab{Appx:Proof of winning}

	Let $0 < \alpha \leq \frac{\beta}{2 + \beta}$ and $0 < \beta' < 1$, and denote $\gamma = \alpha \beta' < \frac{\beta}{2 + \beta}$.  By Lemma~\ref{Lem:Extend beta}, $W$ is $(\Hc, \gamma)$-absolute winning.  Let $\sigma_A: Y\!\times\!\R_{>0} \to \Hc\!\times\!\R_{>0}$ be a positional $(\Hc, \gamma)$-absolute winning strategy for Alice.  With Remark~\ref{Rem:radius 0} and Lemma~\ref{Lem:diffuse}, for any $B_n$ with sufficiently small radius, if we denote $\sigma_A(B_n) = (\Lc_n, \rho_n)$, Alice can pick a ball $A_n \preceq B_n$ such that: 
	\eq{}{r(A_n) = \alpha r(B_n) \text{~\quad and \quad} \dist{c(A_n), \Lc_n} > 2\alpha r(B_n).}
	Then for any $B_{n + 1} \preceq A_n$ with 
	\eq{}{r(B_{n + 1}) = \beta' r(A_n) = \beta' \alpha r(B_n) = \gamma r(B_n),}
	$B_{n + 1}$ satisfies~\ref{H4}:
	\eq{}{\dist{c(B_{n + 1}), \Lc_n} \geq \dist{c(A_n), \Lc_n} - \alpha (1 - \beta') r(B_n) > 2 \gamma r(B_n) \geq r(B_{n + 1}) + \rho_n.}
	Therefore, the infinite sequence 
	$B_1, \sigma_A(B_1), B_2, \sigma_A(B_2), ...$ satisfies~\ref{H1}--\ref{H5} for the $(\Hc, \gamma)$-absolute game.  That implies:
	\eq{}{W \cap Y \cap \bigcap_{n = 1}^\infty \B(B_n) \neq \varnothing.}
	Thus, $W \cap Y$ is $\frac{\beta}{2 + \beta}$-winning.
\qed

\subsection{Proof of Proposition~\ref{Prop:Countable Intersection}}
\lab{Appx:Proof of Countable Intersection}

	We follow the proof of the countable intersection property of Schmidt games~\cite[Theorem 2]{Schmidt66}.  By part (i) of Lemma~\ref{Lem:Extend beta}, it suffices to assume that $W_1, W_2, ...$ be a countable collection of sets that are $(\Hc, \gamma)$-absolute winning on $Y$ with $\gamma = \frac{\beta}{2 + \beta}$.  For $i = 1, 2, ...$, let $\sigma_i: Y\!\times\!\R_{>0} \to \Hc\!\times\!\R_{>0}$ be a positional $(\Hc, \gamma^{2^i})$-absolute winning strategy for Alice with the target set $W_i$.
	
	Let $B_1 = (x, \rho) \in Y\!\times\R_{>0}$ be Bob's arbitrary first move.  By Remark~\ref{Rem:radius 0}, and Lemma~\ref{Lem:diffuse}, Bob always has legitimate moves regardless of Alice's choices following the rules of the $(\Hc, \gamma)$-absolute game.
	
	We define Alice's new strategy $\sigma$ to be:
	\eq{}{\sigma \prt{B_1, ..., B_{2^{i - 1} + (n - 1)2^i}} =  \sigma_i \prt{B_{2^{i - 1} + (n - 1)2^i}} \text{\quad for } n = 1, 2, 3, ...}
	It is easy to check that for $i = 1, 2, ...$, the sequence
	\begin{align*}
		B'_1 &= B_{2^{i - 1}}, \\
		A'_1 &= \sigma\prt{B_1, B_2, ..., B_{2^{i - 1}}} = \sigma_i\prt{B_{2^{i - 1}}} \\
		B'_2 &= B_{2^{i - 1} + 2^i} \\
		A'_2 &= \sigma\prt{B_1, B_2, ..., B_{2^{i - 1} + 2^i}} = \sigma_i\prt{B_{2^{i - 1} + 2^i}}\\
		... \\
		B'_n &= B_{2^{i - 1} + (n - 1)2^i} \\
		A'_n &= \sigma\prt{B_1, B_2, ..., B_{2^{i - 1} + (n - 1)2^i}} = \sigma_i\prt{B_{2^{i - 1} + (n - 1)2^i}} \\
		...
	\end{align*}
	satisfies \ref{H1}-\ref{H5} for $(\Hc, \gamma^{2^i})$-absolute game, and hence
	\eq{}{W_i \cap Y \cap \prt{\bigcap_{n = 1} \B(B_n)} = W_i \cap Y \cap \bigcap_{n = 1} \B(B_{2^{i - 1} + (n - 1)2^i}) \neq \varnothing.}
	
	Since $r(B_n) \to 0$, the unique intersection point of $\B(B_n)$'s must belongs to all $W_i$'s.  Thus,
	\eq{}{\prt{\bigcap_{i = 1}^\infty W_i} \cap Y \cap \prt{\bigcap_{i = 1}^\infty \B(B_i)}  \neq \varnothing,}
	and $\bigcap_{i = 1}^\infty W_i$ is $\Hc$-absolute winning.  Note that the strategy $\sigma$ is not a positional winning strategy.
\qed

\subsection{Proof of Proposition~\ref{Prop:Remove Countable}}
\lab{Appx:Proof of Remove Countable}
	Let $Z = \cbrk{z_1, z_2, ...}$, and denote $W_n = W \smallsetminus \cbrk{z_1, ..., z_n}$.  Since $Z \subseteq W \cap \overline{\bigcup_{\Lc \in \Hc} \prt{\Lc \cap Y}}$, for every $i = 1, 2, ..$ and for every $\rho_i > 0$, there exists $\Lc_i \in \Hc$ such that $z_i \in \Lc_i^{\prt{\rho_i}}$.  If we let $A_i = \prt{\Lc_i, \rho_i}$ for $1 \leq i \leq n$, and for $i > n$, $A_i$ follow a positional winning strategy for Alice in the $\Hc$-absolute game on $Y$ with the target set $W$, then
	\eq{}{W_n \cap Y \cap \bigcap_{n = 1}^\infty \B(B_n) \neq \varnothing.}
	
	Note that by diffuseness, Bob always has eligible moves.  So, $W_n$ is $\Hc$-absolute winning on $Y$ for all $n = 1, 2, ..$.  Therefore, by Proposition~\ref{Prop:Countable Intersection}, 
	\eq{}{W \smallsetminus Z = \bigcap_{n = 1}^\infty W_n}
	is $\Hc$-absolute winning on $Y$.
\qed

\subsection{Proof of Proposition~\ref{Prop:NHWinning}}
\lab{Appx:Proof of NHWinning}

	(i) $\Rightarrow$ (ii): Since $\Hc \subseteq \Hc^{*N}$, every choice in $(\Hc, \beta)$-absolute game is available in the $(\Hc^{*N}, \beta)$-absolute game.  So Alice can apply her $(\Hc, \beta)$-absolute winning strategy in the $(\Hc^{*N}, \beta)$-game.  
	
	For the converse, assume that $Y$ is $(\Hc, \beta)$-diffuse and consider the $(\Hc, \gamma)$-absolute game, where $\gamma = \frac{\beta}{2 + \beta}$.  Let $\sigma: Y\!\times\!\R_{>0} \to \Hc^{*N}\!\times\!\R_{>0}$ be a positional winning strategy for Alice in the $(\Hc^{*N}, \gamma^N)$-absolute game on $Y$.  For $k = 0, 1, ...$, consider Bob's $(kN + 1)^{\text{th}}$ move $B_{kN + 1}$ and let
	\eq{}{\sigma\prt{B_{kN + 1}} = \prt{\Lc_{k, 1} \cup ... \cup \Lc_{k, N}, \rho_k}.}
	Then for $1 \leq i \leq N$, at $(kN + i)^{\text{th}}$ move, Alice chooses
	\eq{}{A_{kN + i} = \prt{\Lc_{k, i}, \rho_k}.}
	It is easy to check that Alice's choices satisfies~\ref{H2}, and by Lemma~\ref{Lem:diffuse}, Bob can always make a move.  To see that this is a winning strategy for $(\Hc, \gamma)$-game, we view the following sequence:
	\eq{}{B'_1 = B_1, A'_1 = \sigma(B_1), B'_2 = B_{N + 1}, A'_2 = \sigma(B_{N + 1}), B'_3 = B_{2N + 1}, A'_3 = \sigma(B_{2N + 1}), ...}
	as a play in the $(\Hc^{*N}, \gamma^N)$-absolute game on $Y$.  It can be verified that this sequence satisfies~\ref{H1}--\ref{H5}, and since $\sigma$ is a $(\Hc^{*N}, \gamma^N)$-absolute winning strategy,
	\eq{}{W \cap Y \cap \bigcap_{n = 1}^\infty \B(B_n) = W \cap Y \cap \bigcap_{n = 1}^\infty \B(B'_n) \neq \varnothing.}
	Thus, $W$ is $\Hc$-absolute winning on $Y$.
\qed

\bibliographystyle{amsalpha}
\bibliography{mybib}

\end{document}